\theoremstyle{plain}
\newtheorem{theorem}{Theorem}[section]
\newtheorem{corollary}[theorem]{Corollary}
\newtheorem{lemma}[theorem]{Lemma}
\newtheorem{proposition}[theorem]{Proposition}
\theoremstyle{definition}
\newtheorem{remark}[theorem]{Remark}
\theoremstyle{remark}
\numberwithin{equation}{section}
\title[Bessel processes in the freezing regime]{Limit theorems
 for multivariate Bessel processes in the freezing regime}
\author{Sergio Andraus}
\address{Faculty of Science and Engineering, Chuo University, Kasuga 1-13-27, Bunkyo-Ku, Tokyo 112-8551, Japan}
\email{andraus@phys.chuo-u.ac.jp}
\author{Michael Voit}
\address{Fakult\"at Mathematik, Technische Universit\"at Dortmund,
          Vogelpothsweg 87,
          D-44221 Dortmund, Germany}
\email{michael.voit@math.tu-dortmund.de}
\subjclass[2010]{Primary 60F15; Secondary 60F05, 60J60, 60B20, 60H20, 70F10, 82C22, 33C67 }
\keywords{Interacting particle systems, Calogero-Moser-Sutherland models, strong limiting laws, central limit theorems,
zeros of Hermite polynomials, zeros of Laguerre polynomials, Hermite ensembles,  Laguerre ensembles}
\begin{document}
\date{\today}

\begin{abstract} 
Multivariate Bessel processes describe the stochastic dynamics of interacting particle systems of 
Calogero-Moser-Sutherland type and are related with $\beta$-Hermite and Laguerre ensembles. 
It was shown by Andraus, Katori, and Miyashita that for fixed starting points, these processes admit
interesting limit laws when the  multiplicities $k$ tend to $\infty$, where in some cases
the limits are described by the zeros of classical Hermite and Laguerre 
polynomials. In this paper we use SDEs to derive corresponding limit laws for
 starting points of the form $\sqrt k\cdot x$ for $k\to\infty$ with $x$ in the interior of the corresponding Weyl chambers.
Our limit results are a.s. locally uniform in time. Moreover,
in some cases we present associated central limit theorems.
\end{abstract}

\maketitle

\section{Introduction} 

The dynamics of integrable interacting particle systems of 
Calogero-Moser-Suther\-land type on the real line $\mathbb R$ with $N$ particles 
 can be described by certain time-homogeneous diffusion processes on 
suitable closed subsets of $\mathbb R^N$. These processes are often called (multivariate) 
Bessel, Dunkl-Bessel, or radial Dunkl processes; for their detailed definition and properties, see \cite{CGY,GY,R1,R2, 
RV1,RV2,DV,A}.
These processes are classified via root systems and  finitely many  multiplicity parameters
which act as coupling constants of interaction.
In this paper, we restrict our attention  to the root systems that are mainly important for particle systems and 
in random matrix theory, namely
those of the types $A_{N-1}$, $B_N$, and $D_N$, as here the number $N$ of particles is arbitrary. 
Besides these root systems and a finite number of exceptional cases,
 there are the dihedral
sytems with $N=2$ (see \cite{Dem2}) as well direct products. 
We shall not study these cases in this paper.

To explain the results of this paper, we briefly recapitulate some well-known basic facts.
In the  case $A_{N-1}$, we have a one-dimensional multiplicity $k>0$, the processes live on the closed Weyl chamber
\[C_N^A:=\{x\in \mathbb R^N: \quad x_1\ge x_2\ge\ldots\ge x_N\},\]
 the generator of the transition semigroup is given by
\begin{equation}\label{def-L-A} Lf:= \frac{1}{2} \Delta f +
 k \sum_{i=1}^N\Bigl( \sum_{j\ne i} \frac{1}{x_i-x_j}\Bigr) \frac{\partial}{\partial x_i}f ,
 \end{equation}
and we assume reflecting boundaries. The latter means that the domain of $L$ may be chosen as
\[D(L):=\{f|_{C_N^A}: \>\> f\in C^{(2)}(\mathbb R^N), \>\>\> f\>\>\text{ invariant under all coordinate permutations}\}.\]

In the case $B_N$,  we have 2  multiplicities $k_1,k_2\ge 0$, the processes live on 
\[C_N^B:=\{x\in \mathbb R^N: \quad x_1\ge x_2\ge\ldots\ge x_N\ge0\},\]
 the generator of the transition semigroup is 
\begin{equation}\label{def-L-B} Lf:= \frac{1}{2} \Delta f +
 k_2 \sum_{i=1}^N \sum_{j\ne i} \Bigl( \frac{1}{x_i-x_j}+\frac{1}{x_i+x_j}  \Bigr)
 \frac{\partial}{\partial x_i}f 
\quad + k_1\sum_{i=1}^N\frac{1}{x_i}\frac{\partial}{\partial x_i}f, \end{equation}
and we again assume reflecting boundaries, i.e., the domain of $L$ is 
\begin{align}D(L):=\{f|_{C_N^B}:&  \>\>f\in C^{(2)}(\mathbb R^N),
 \>\>\> f\>\>\text{ invariant under all permutations of } \notag\\
&\text{ coordinates and under all sign changes in all coordinates}\}.\notag
\noindent\end{align}

We study limit theorems for these diffusions $(X_{t,k})_{t\ge0}$ on $C_N$ (with $C_N=C_N^A$ or $C_N^B$)
for the fixed times $t>0$ in freezing regimes, where $k$ stands for $k\ge0$ in the $A_{N-1}$-case, and for $(k_1,k_2)$
in the $B_N$-case. Freezing means that for fixed times $t>0$, we consider
 $k\to\infty$ in the $A_{N-1}$-case, and in the $B_N$-case, the two cases  $(k_1,k_2)=(\nu\cdot \beta,\beta)$
with $\nu>0$ fixed, $\beta\to\infty$ as well as   $k_2>0$ fixed, $k_1\to \infty$. 
For these limit cases,
\cite{AKM1,AKM2,AM} present weak limit laws for $X_{t,k}$ for  fixed times $t>0$ when the processes
 start in the origin $0\in C_N$  or with a fixed starting distribution independent from $k$.
In this paper we shall derive similar limit results when the starting points of the diffusions  $(X_{t,k})_{t\ge0}$ 
depend on $k$, more precisely, if the  starting points have the form $\sqrt\kappa\cdot x$
 where $\kappa$ is the parameter in the coupling constants which tends to $\infty$, and where $x$ is a point in the interior of 
$C_N$. The last condition will be essential in this paper, as we shall apply an SDE 
approach to the limit results which works properly only in the interior of $C_N$ as the SDEs become singular on the boundaries.
It will turn out on an informal level that the limit results in \cite{AKM1,AKM2,AM} 
 may be seen as special cases of our results for $x=0$, even if the case $x=0$ is not covered by our approach.

To explain the connection of our results with \cite{AKM1,AKM2,AM}, 
we  recapitulate some further details.
The transition probabilities of the Bessel processes are given for all root systems as
 follows by \cite{R1,R2,RV1,RV2}:
For $t>0$,  $x\in C_N$, $A\subset C_N$ a Borel set, 
\begin{equation}\label{density-general}
K_t(x,A)=c_k \int_A \frac{1}{t^{\gamma+N/2}} e^{-(\|x\|^2+\|y\|^2)/(2t)} J_k\Big(\frac{x}{\sqrt{t}}, \frac{y}{\sqrt{t}}\Big) 
\cdot w_k(y)\> dy
\end{equation}
with
\begin{equation}\label{def-wk}
w_k^A(x):= \prod_{i<j}(x_i-x_j)^{2k}, \quad\quad 
w_k^B(x):= \prod_{i<j}(x_i^2-x_j^2)^{2k_2}\cdot \prod_{i=1}^N x_i^{2k_1},\end{equation}
and
\begin{equation}\label{def-gamma}
\gamma_A(k)=kN(N-1)/2, \quad\quad  \gamma_B(k_1,k_2)=k_2N(N-1)+k_1N
\end{equation}
respectively. 
$w_k$ is homogeneous of degree $2\gamma$.
Furthermore,
$c_k>0$ is a known normalization constant, and
$J_k$ is a multivariate Bessel function of type $A_{N-1}$ or $B_N$ with multiplicities $k$ or $(k_1,k_2)$ respectively;
 see e.g. \cite{R1,R2}.

We do not need much information about $J_k$. We only recapitulate that 
$J_k$ is analytic on $\mathbb C^N \times \mathbb C^N $ with
$ J_k(x,y)>0$ for $x,y\in \mathbb R^N $.
Moreover, $J_k(x,y)=J_k(y,x)$ and $J_k(0,y)=1$
for all $x,y\in \mathbb C^N $.

Therefore, if we start the process from $0$, then $X_{t,k}$ has the Lebesgue density
\begin{equation}\label{density-A-0}
 \frac{c_k}{t^{\gamma+N/2}} e^{-\|y\|^2/(2t)} \cdot w_k(y)\> dy
\end{equation}
on $C_N$ for $t>0$. In the case $A_{N-1}$, the density of
 $X_{t,k}/{\sqrt{tk}}$ has the form
\[\textrm{const.}(k)\cdot \exp\Bigl( k\Bigl(2\sum_{i,j: i<j} \ln(y_i-y_j) -\|y\|^2/2\Bigr)\Bigr)=:
 \textrm{const.}(k)\cdot \exp\Bigl( k\cdot W(y)\Bigr)\]
which is  well-known for $k=1/2, 1,2$ as the distribution of the eigenvalues of
Gaussian orthogonal, unitary, and symplectic ensembles; see e.g. \cite{D}.
Moreover, for general $k>0$, (\ref{density-A-0}) appears as the distribution of the tridiagonal matrix models of
Dumitriu and Edelman \cite{DE1,DE2} for $\beta$-Hermite and $\beta$-Laguerre ensembles.

In the case $A_{N-1}$, 
it was observed in  \cite{AKM1} (see also Section 6.7 of \cite{S}) that the maximum of $W$ on $C_N^A$ appears precisely
 for $y=\sqrt 2\cdot z$ where $ z\in C_N^A$ is the vector
whose entries are the zeroes of the classical  Hermite polynomial $H_N$
where  the $(H_N)_{N\ge 0}$ are orthogonal w.r.t.
 the density  $e^{-x^2}$.
This shows that 
$X_{t,k}/\sqrt{2tk}$ tends to $z$ in distribution for $k\to\infty$.
This means that 
\begin{equation}\label{LLN-A-start-0}
\lim_{k\to\infty}\frac{X_{t,k}}{\sqrt{2tk}}= z
\end{equation}
in probability whenever the $X_{t,k}$ are defined on a common probability space. In fact, this result was proved in 
\cite{AKM1} in a  more general form,
 namely for arbitrary fixed starting distributions.
Moreover, (\ref{LLN-A-start-0}) and an associated central limit theorem for start in $0$ was derived in \cite{DE2}
via the explicit  tridiagonal matrix model of
Dumitriu and Edelman \cite{DE1}; see also \cite{V2} for another elementary approach.

We now compare  (\ref{LLN-A-start-0}) with the main results here for the case $A_{N-1}$.
We show in Theorem \ref{SLLN-A}  below that the Bessel processes $(X_{t,k})_{t\ge0}$ with start in $\sqrt k\cdot x$ (for some 
point $x$ in the interior of $C_N$) satisfy
\begin{equation}\label{LLN-A-start-interior-introduction}
X_{t,k}/\sqrt k\to \phi(t,x) \quad\quad\text{for}\quad\quad k\to\infty
\end{equation}
with an error of size $O(1/\sqrt k)$ locally uniformly in $t$ almost surely where 
$\phi(t,x)$ is the solution of a (deterministic) 
 dynamical system at time $t>0$, where the system starts at time $0$ in $x$. For the details we refer to Section 2.

For the root systems $B_N$ and $D_N$ we shall derive corresponding results.

We mention that the  locally uniform convergence in $t$ in (\ref{LLN-A-start-interior-introduction})
 and the corresponding results for the other root systems we discuss ensures that we can interchange limits for $k\to\infty$
with stochastic integrals. This finally leads, in combination with the SDEs,
 to central limit theorems (CLTs) at least in some cases; see Section 4 below for some cases and also \cite{VW}
for other cases.

The basis for the SDE-approach for all root systems
is the following well-known result (see Lemma 3.4, Corollary 6.6, and Proposition 6.8 of \cite{CGY}):

\begin{theorem}\label{SDE-basic} Let $k>0$ in the 
$A_{N-1}$-case or $k_1,k_2>0$ in the $B_N$-case. Then, for each starting point $x\in C_N$ and  $t>0$,
the Bessel process $(X_{t,k})_{t\ge0}$ satisfies
 \[E\Bigl(\int_0^t \nabla(\ln w_k)(X_{s,k}) \> ds\Bigr)<\infty.\]

 Moreover, the initial value problem
\begin{equation}\label{SDE-general}
X_0=x,\quad\quad dX_t= dB_t +  \frac{1}{2} (\nabla(\ln w_k))(X_t) \> dt
\end{equation}
 (with an $N$-dimensional Brownian motion $(B_t)_{t\ge0}$) 
has a unique (strong) solution $(X_t)_{t\ge0}$.
This solution is a Bessel process as  above.

Moreover, if  $k\ge 1/2$ in the 
$A_{N-1}$-case or $k_1,k_2\ge 1/2$ in the $B_N$-case, and if
 $x$ is in the interior of $C_N$, then $(X_t)_{t\ge0}$ lives on the interior on $C_N$,
 i.e. the solution does not meet the boundary almost surely.
\end{theorem}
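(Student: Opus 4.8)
The plan is to follow the route of \cite{CGY}, establishing the three assertions in turn. First, for the integrability of the drift, I would read off from the transition kernel \eqref{density-general} that, for fixed $t>0$, the law of $X_{t,k}$ has a density on $C_N$ which is bounded and which, near every wall of $C_N$, is dominated on compact sets by a constant times a power of the distance to that wall — by $(x_i-x_{i+1})^{2k}$ near $\{x_i=x_{i+1}\}$ in the $A_{N-1}$-case, and by $(x_i-x_{i+1})^{2k_2}$ resp.\ $x_N^{2k_1}$ near $\{x_i=x_{i+1}\}$ resp.\ $\{x_N=0\}$ in the $B_N$-case; this is immediate from \eqref{def-wk} together with the analyticity and positivity of $J_k$. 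Since the singular component of $\nabla(\ln w_k)$ blows up only like the reciprocal of the distance to the relevant wall, the product of the density with $\|\nabla(\ln w_k)\|$ is integrable across each wall precisely when $2k-1>-1$, resp.\ $2k_2-1>-1$ and $2k_1-1>-1$, i.e.\ precisely when $k>0$, resp.\ $k_1,k_2>0$. Bounding $E\bigl(\|\nabla(\ln w_k)(X_{s,k})\|\bigr)$ in this way, integrating in $s\in(0,t]$ and invoking Fubini then gives $E\bigl(\int_0^t\|\nabla(\ln w_k)(X_{s,k})\|\,ds\bigr)<\infty$.

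Next, for the SDE \eqref{SDE-general}, I would first check that the Bessel process $(X_{t,k})$ — which exists and is conservative, with transition function \eqref{density-general} — is a weak solution: applying It\^o's formula to the coordinate functions $x\mapsto x_i$, localized by the exit times of chambers slightly shrunk away from the boundary (where the drift is smooth) and using the integrability just obtained to pass to the limit in the drift term, one finds that $M^{(i)}_t:=X^{(i)}_{t,k}-x_i-\tfrac12\int_0^t\partial_i(\ln w_k)(X_{s,k})\,ds$ are continuous local martingales with $d\langle M^{(i)},M^{(j)}\rangle_t=\delta_{ij}\,dt$ (the second-order part of $L$ being $\tfrac12\Delta$, see \eqref{def-L-A}, \eqref{def-L-B}); by L\'evy's characterization $(M_t)_{t\ge0}$ is then an $N$-dimensional Brownian motion, so \eqref{SDE-general} holds. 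For uniqueness I would use that $-\ln w_k$ is convex on $C_N$: by \eqref{def-wk}, $\ln w_k$ is a nonnegative linear combination of the functions $\ln(x_i-x_j)$ and, in the $B_N$-case, $\ln(x_i+x_j)$ and $\ln x_i$, each of which is concave on $C_N$; hence the drift $b:=\tfrac12\nabla(\ln w_k)$ is monotone, $\langle b(\xi)-b(\eta),\,\xi-\eta\rangle\le0$ in the interior of $C_N$. If $X,X'$ solve \eqref{SDE-general} on one probability space with the same Brownian motion, then $X_t-X'_t=\int_0^t\bigl(b(X_s)-b(X'_s)\bigr)\,ds$ is absolutely continuous with $\tfrac{d}{dt}\|X_t-X'_t\|^2=2\langle X_t-X'_t,\,b(X_t)-b(X'_t)\rangle\le0$, forcing $X\equiv X'$. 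Pathwise uniqueness plus weak existence then gives, by Yamada--Watanabe, a unique strong solution, which — being equal in law to $(X_{t,k})$ — is a Bessel process. (Alternatively one may appeal to the theory of stochastic differential equations with a maximal monotone drift.)

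Finally, for the non-collision statement, assume $k\ge\tfrac12$, resp.\ $k_1,k_2\ge\tfrac12$, let $x$ lie in the interior of $C_N$, and let $\tau$ be the first time $(X_{t,k})$ meets $\partial C_N$. For $t<\tau$, It\^o's formula applied to $\ln w_k$ gives $\ln w_k(X_{t,k})=\ln w_k(x)+N_t+A_t$ with $N_t:=\int_0^t\nabla(\ln w_k)(X_{s,k})\cdot dB_s$ a continuous local martingale and $A_t:=\int_0^t\bigl(\tfrac12\Delta\ln w_k+\tfrac12\|\nabla\ln w_k\|^2\bigr)(X_{s,k})\,ds$. In the $A_{N-1}$-case the elementary identity $\sum_i\bigl(\sum_{j\ne i}\tfrac{1}{x_i-x_j}\bigr)^2=\sum_{i\ne j}\tfrac{1}{(x_i-x_j)^2}$ (a consequence of $\sum_i\prod_{j\ne i}(x_i-x_j)^{-1}=0$) turns the integrand of $A$ into $k(2k-1)\sum_{i\ne j}(x_i-x_j)^{-2}\ge0$, so $A$ is nondecreasing and $\ln w_k(X_{t,k})$ is a local submartingale on $[0,\tau)$; the analogous computation in the $B_N$-case, using the corresponding identities, produces a nonnegative integrand precisely when $k_1,k_2\ge\tfrac12$. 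Since $w_k\to0$ on $\partial C_N$, on $\{\tau<\infty\}$ one would have $\ln w_k(X_{t,k})\to-\infty$ as $t\uparrow\tau$; but $A\ge0$ and a continuous local martingale cannot tend to $-\infty$ at a finite time (on $\{\langle N\rangle_\tau<\infty\}$ it converges to a finite limit, and on $\{\langle N\rangle_\tau=\infty\}$ its limit superior equals $+\infty$), so $\{\tau<\infty\}$ is null and $(X_{t,k})$ stays in the interior forever. A more hands-on alternative is a local comparison near each wall: close to a generic point of $\{x_i=x_{i+1}\}$ the gap $X^{(i)}_{t,k}-X^{(i+1)}_{t,k}$ equals, up to a bounded drift perturbation, $\sqrt2$ times a Bessel process of dimension $2k+1\ge2$ (resp.\ $2k_2+1\ge2$), and close to $\{x_N=0\}$ the coordinate $X^{(N)}_{t,k}$ equals, up to a bounded perturbation, a Bessel process of dimension $2k_1+1\ge2$; one then patches these local statements over a finite cover of $\partial C_N$.

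The step I expect to be the main obstacle is this last one. One must both justify It\^o's formula for $\ln w_k$ (equivalently, for the coordinate maps) along a process whose drift is only integrable in the sense of the first step, and then show that the martingale part cannot escape to $-\infty$; it is precisely the sign of $k(2k-1)$ (resp.\ of $2k_1-1$ and $2k_2-1$) that pins the threshold down to $\tfrac12$, and in the comparison approach the bounded perturbation of the Bessel drift is only controlled away from the higher-codimension strata of $\partial C_N$, so one must ensure the exit-time iteration exhausts $[0,\infty)$. By contrast, the density estimate behind the drift integrability and the log-concavity argument for pathwise uniqueness are comparatively routine.
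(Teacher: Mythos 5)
The paper itself gives no proof of Theorem \ref{SDE-basic}: it is quoted as known, with the proof delegated to Lemma 3.4, Corollary 6.6 and Proposition 6.8 of \cite{CGY}, so your attempt has to be measured against that literature rather than against an argument in the text. Your overall architecture is the standard one there (and in C\'epa--L\'epingle, Schapira, Graczyk--Ma{\l}ecki): drift integrability read off from the explicit kernel \eqref{density-general}, weak existence plus pathwise uniqueness via concavity of $\ln w_k$ on the convex chamber (dissipative drift) and Yamada--Watanabe, and non-collision for multiplicities $\ge 1/2$ by McKean's argument applied to $\ln w_k(X_t)$. Your first and third steps are essentially sound: the integrand of the finite-variation part is indeed $k(2k-1)\sum_{i\ne j}(x_i-x_j)^{-2}$ in type $A_{N-1}$, and in type $B_N$ the mixed $k_1k_2$-terms and the triple cross-terms cancel (antisymmetry, resp.\ the Lagrange-type identity in the variables $x_i^2$), leaving $k_2(2k_2-1)\sum_{i\ne j}\bigl((x_i-x_j)^{-2}+(x_i+x_j)^{-2}\bigr)+k_1(2k_1-1)\sum_i x_i^{-2}$; the local-martingale dichotomy then rules out $\ln w_k(X_t)\to-\infty$ in finite time. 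Note, incidentally, that you predicted the non-collision step to be the main obstacle; in fact that step is routine, and the crux lies elsewhere.

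The genuine gap is in the step you pass over quickly: that the Bessel process itself satisfies \eqref{SDE-general} on all of $[0,\infty)$. Localizing It\^o's formula by exit times of chambers shrunk away from $\partial C_N$ and passing to the limit in the drift integral only yields the semimartingale decomposition up to the \emph{first hitting time} of $\partial C_N$, because those exit times converge to that hitting time, not to $+\infty$. The theorem is asserted for all $k>0$ (resp.\ $k_1,k_2>0$) and all starting points $x\in C_N$, including boundary points such as $x=0$; for multiplicities in $(0,1/2)$ the process does reach the walls, and (as for a one-dimensional Bessel process of dimension in $(1,2)$) the set of boundary visits is a.s.\ a closed set without isolated points, so one cannot exhaust $[0,\infty)$ by restarting at successive hitting times with the strong Markov property. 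Extending the decomposition across boundary visits --- e.g.\ by showing that no boundary local time appears, or by approximating the coordinate functions by suitable $C^2$ functions and exploiting the expected-drift bound --- is exactly the content of the cited results of \cite{CGY}, and it is missing from your sketch. Two smaller glosses: in your first step you need integrability in $s$ near $0$ and decay at infinity, which requires a quantitative bound on $J_k$ (such as $J_k(x,y)\le e^{\|x\|\,\|y\|}$) rather than just its continuity and positivity; and the monotonicity argument for pathwise uniqueness should be formulated for $C_N$-valued solutions whose drift integral converges absolutely, so that for a.e.\ $s$ both processes lie off the walls, where the concavity of $\ln w_k$ is available.
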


This paper is organized as follows. In the next section we derive a strong limit law (LL) for 
Bessel processes of type $A_{N-1}$ for $k\to\infty$.
Section 3 is then devoted to corresponding LLs in the case $B_N$ for two freezing regimes which were already studied in 
\cite{AKM2,AM}. For the regime $k_1\to\infty$ and $k_2>0$ fixed, we use the locally uniform LL in Section 4,
 in order to derive an associated CLT. Finally, in Section 5 we consider the LL for the root systems
of type $D_N$, which are related with the $B_N$-case for  $k_2=0$.

\section{Strong limiting law for the root system $A_{N-1}$}

In this section we derive a locally uniform strong LL in the case  $A_{N-1}$ for $k\to\infty$ for starting points 
of the form $\sqrt k\cdot x$ with $x$ in the interior of $C_N^A$. 
The main result corresponds to the weak LL  (\ref{LLN-A-start-0}) started at the origin.

For  $k\ge1/2$ and any starting point $x=x(k)$ in the interior of $C_N^A$ we study
 the associated  Bessel process $(X_{t,k})_{t\ge0}$ of type $A_{N-1}$ which may be seen as
 the   unique solution of the initial value problem
(\ref{SDE-general}). In the $A_{N-1}$ case the SDE reads
\begin{equation}\label{SDE-A}
 dX_{t,k}^i = dB_t^i+ k\sum_{j\ne i} \frac{1}{X_{t,k}^i-X_{t,k}^j}dt \quad\quad(i=1,\ldots,N),
\end{equation}
with an $N$-dimensional Brownian motion $(B_t^1,\ldots,B_t^N)_{t\ge0}$.
In order to derive LLs for $X_{t,k}$,
we study the renormalized processes $(\tilde X_{t,k}:=X_{t,k}/\sqrt k)_{t\ge0}$ which satisfy
\begin{equation}\label{SDE-A-normalized}
d\tilde X_{t,k}^i =\frac{1}{\sqrt k}dB_t^i + \sum_{j\ne i} 
 \frac{1}{\tilde X_{t,k}^i-\tilde X_{t,k}^j}dt\quad\quad(i=1,\ldots,N). 
\end{equation}
We  compare (\ref{SDE-A-normalized}) with the deterministic limit case $k=\infty$. This limit case
has the following properties:

\begin{lemma}\label{deterministic-boundary-A}
For $\epsilon>0$ consider the open subset $U_\epsilon:=\{x\in C_N^A:\> d(x,\partial C_N^A)>\epsilon\}$
 (where $\mathbb R^N$ carries the usual Euclidean norm and $d(x,y)$ denotes the distance).
Then the function 
\[H:U_\epsilon\to \mathbb R^N, \quad x\mapsto \Bigl( \sum_{j\ne1} \frac{1}{x_1-x_j},\ldots,\sum_{j\ne N} \frac{1}{x_N-x_j}
\Bigr)\]
is Lipschitz continuous on $U_\epsilon$ with Lipschitz constant $L_\epsilon>0$,
 and for each starting point $x_0\in U_\epsilon$, the solution $\phi(t,x_0)$
of the dynamical system $\frac{d}{dt}x(t) =H(x(t))$ satisfies $\phi(t,x_0)\in U_\epsilon$ for all $t\ge0$.
\end{lemma}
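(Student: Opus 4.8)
The plan is to establish the two assertions of Lemma~\ref{deterministic-boundary-A} separately: first the Lipschitz bound on $H|_{U_\epsilon}$, and then the forward invariance of $U_\epsilon$ under the flow. For the Lipschitz estimate, I would note that each component $H_i(x)=\sum_{j\ne i}(x_i-x_j)^{-1}$ is a sum of scalar maps $x\mapsto 1/(x_i-x_j)$. On $U_\epsilon$ one has $x_i-x_j\ge (i-j)\cdot c\,\epsilon$ for $i<j$ (more crudely, consecutive coordinates are separated by at least some multiple of $\epsilon$, so $|x_i-x_j|$ is bounded below uniformly by a constant times $\epsilon$), hence the gradient of $1/(x_i-x_j)$ has norm $\le 1/(x_i-x_j)^2 \le C/\epsilon^2$. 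Summing the finitely many terms and combining components gives a global bound $\|DH(x)\|\le L_\epsilon$ on the convex-enough region, from which Lipschitz continuity on $U_\epsilon$ follows by the mean value inequality (one has to be slightly careful that $U_\epsilon$ need not be convex, but any two points of $U_\epsilon$ can be joined by a path in $U_\epsilon$ of length comparable to their distance, or one can simply enlarge to the convex hull on which the same lower bounds on $x_i-x_j$ persist up to a harmless constant). Existence and uniqueness of the local solution $\phi(t,x_0)$ then follows from the Picard--Lindel\"of theorem.

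For the forward invariance, the key observation is a monotonicity/ordering property of the dynamical system $\dot x=H(x)$: the differences $y_{ij}(t):=x_i(t)-x_j(t)$ for $i<j$ satisfy an autonomous-looking system whose right-hand side pushes colliding coordinates apart. Concretely, $\dot y_{ij} = H_i(x)-H_j(x)$, and the dominant term as $x_i-x_j\to 0^+$ is $2/(x_i-x_j)>0$, so the boundary of the chamber is repelling. I would make this rigorous by considering the adjacent gaps $g_i(t):=x_i(t)-x_{i+1}(t)$ and showing that whenever $g_i$ is the smallest gap (equivalently, near the part of $\partial C_N^A$ one is worried about) its derivative is bounded below favorably; a clean route is to track $m(t):=\min_i g_i(t)$ (which is locally Lipschitz in $t$) and show $\dot m(t)\ge 0$ whenever $m(t)$ is small, using that the smallest adjacent gap $g_i$ has $\dot g_i \ge 2/g_i - (\text{bounded interaction with farther particles})$, which is positive once $g_i$ is small. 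Actually, to get the cleaner statement $\phi(t,x_0)\in U_\epsilon$ for all $t\ge 0$ it suffices to observe that $d(x,\partial C_N^A)$ is (up to a dimensional constant) controlled by $\min_i g_i(t)$ together with the constraint structure, so monotonicity of the minimal gap from below at the level $\epsilon$ yields the claim, and in fact the minimal gap is non-decreasing as long as it stays below a threshold depending only on the (bounded) contribution of the non-adjacent terms.

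A subtlety I want to flag: the distance $d(x,\partial C_N^A)$ is not literally equal to the minimal adjacent gap divided by $\sqrt 2$ unless one is careful, so the honest statement to prove is that $\dot{}\bigl(\min_i g_i\bigr)\ge 0$ once $\min_i g_i$ is below some $\epsilon_0$, and then argue that $U_\epsilon$ (defined via Euclidean distance) is trapped between two sublevel sets of $\min_i g_i$, so forward invariance of the latter's small sublevel sets forces $\phi(t,x_0)$ to stay in $U_\epsilon$ (possibly after shrinking $\epsilon$ to an equivalent $\epsilon'$ in the intermediate estimates — but since the statement is ``for all $\epsilon>0$ there is $L_\epsilon$'' and ``the flow stays in $U_\epsilon$'', it is cleanest to prove the gap monotonicity directly and then note $x_0\in U_\epsilon \Rightarrow \min_i g_i(0)\ge\epsilon \Rightarrow \min_i g_i(t)\ge\epsilon\ \forall t \Rightarrow \phi(t,x_0)\in U_\epsilon$, where the last implication uses that $\min_i g_i\ge\epsilon$ is in fact \emph{equivalent} to $x\in C_N^A$ being $\epsilon$-far from every wall $\{x_i=x_{i+1}\}$, and every wall of $\partial C_N^A$ is of this adjacent form).

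The main obstacle, and the step deserving the most care, is the forward-invariance argument: one must rule out that interactions with the far-away coordinates can overwhelm the self-repulsion of the smallest gap. The resolution is quantitative and uses that $U_\epsilon$ is precisely where those far interactions are uniformly bounded (by the same constant $\sim C/\epsilon$ governing $H$ itself), so for $\min_i g_i$ near $\epsilon$ one compares $2/g_i \gg$ the bounded remainder — wait, this is false for $g_i=\epsilon$ with $\epsilon$ large. The correct and clean argument does not need any smallness of $\epsilon$: it uses the full ordering. One shows that for $i<j$, $\dot y_{ij}=H_i-H_j= \sum_{\ell\ne i,j}\bigl(\tfrac{1}{x_i-x_\ell}-\tfrac{1}{x_j-x_\ell}\bigr) + \tfrac{2}{x_i-x_j}$, and the bracketed terms, because of the ordering $x_1\ge\cdots\ge x_N$, each have the \emph{same sign pattern} that makes the whole gap-dynamics a cooperative/order-preserving system; in particular one proves by a comparison argument (Gronwall applied to the minimal gap, or the standard fact that for such Coulomb-type gradient flows the pairwise gaps are non-decreasing) that every $x_i(t)-x_j(t)$ is non-decreasing in $t$, hence $\min_i\bigl(x_i(t)-x_{i+1}(t)\bigr)\ge \min_i\bigl(x_i(0)-x_{i+1}(0)\bigr)\ge\epsilon$, giving $\phi(t,x_0)\in U_\epsilon$ for all $t\ge 0$, and simultaneously showing the solution never leaves the domain of Lipschitz continuity so that it is global.
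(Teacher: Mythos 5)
Your Lipschitz step is fine, and the convexity worry you flag is unnecessary: since $d(x,\partial C_N^A)=\min_i\,(x_i-x_{i+1})/\sqrt2$, the set $U_\epsilon$ is exactly $\{x:\ x_i-x_{i+1}>\sqrt2\,\epsilon \text{ for all } i\}$, an intersection of open half-spaces, hence convex, so the mean-value inequality applies directly. Your reduction of forward invariance to monotonicity of the minimal adjacent gap is also the right reduction (it is what the paper does, after passing to the gap variables $y_i=x_i-x_{i+1}$). The genuine gap is in how you establish that monotonicity. Your first attempt (the self-repulsion $2/g_i$ beats the far interactions once $g_i$ is \emph{small}) you rightly abandon, but the replacement you settle on --- that every pairwise gap $x_i(t)-x_j(t)$ is non-decreasing because the cross terms ``have the same sign pattern'' --- is false. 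For $\ell$ outside the pair $\{i,j\}$ the term $\frac{1}{x_i-x_\ell}-\frac{1}{x_j-x_\ell}$ is $\le 0$, and it can dominate: for $N=3$ with $x_1-x_2=10$ and $x_2-x_3=10^{-2}$ one has
\[
\frac{d}{dt}(x_1-x_2)=\frac{2}{10}+\frac{1}{10.01}-\frac{1}{0.01}\approx -99.7<0,
\]
so even an adjacent gap can shrink. (A cooperativity/Kamke-type argument would in any case yield comparison between two different solutions, not monotonicity in $t$ of the gaps of a single solution.) So the final invariance argument rests on a false lemma, and no correct proof of the needed statement remains in the proposal.

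What is true, and all the lemma needs, is that the \emph{minimal} adjacent gap is non-decreasing, and the proof must exploit minimality (not smallness) to control the attractive-looking far-field terms. Concretely, if $g_i=x_i-x_{i+1}$ is minimal among the adjacent gaps, then $x_\ell-x_i\ge(i-\ell)g_i$ and $x_\ell-x_{i+1}\ge(i+1-\ell)g_i$ for $\ell<i$, so
\[
\Bigl|\frac{1}{x_i-x_\ell}-\frac{1}{x_{i+1}-x_\ell}\Bigr|
=\frac{g_i}{(x_\ell-x_i)(x_\ell-x_{i+1})}\le\frac{1}{(i-\ell)(i-\ell+1)\,g_i},
\]
and similarly for $\ell>i+1$; summing the telescoping bounds gives a total negative contribution of magnitude at most $\frac{1}{g_i}\bigl(2-\frac1i-\frac1{N-i}\bigr)$, strictly less than the repulsion $\frac{2}{g_i}$, whence $\frac{d}{dt}g_i\ge\frac{1}{g_i}\bigl(\frac1i+\frac1{N-i}\bigr)>0$ at any minimizing index, and the (locally Lipschitz) minimum $\min_i g_i$ is non-decreasing, giving $\phi(t,x_0)\in U_\epsilon$ for all $t\ge0$ and, since $H$ is bounded on $U_\epsilon$, global existence. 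This quantitative comparison at the minimizing index --- which the paper carries out by rewriting the system in the variables $y_i$ and pairing the reciprocal sums so that each paired difference is nonnegative when $y_i$ is minimal --- is precisely the step missing from your proposal.
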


\begin{proof} For $x\in U_\epsilon$ and $i\ne j$ we have $|x_i-x_j|>\epsilon$. Hence there is a constant $C>0$ with
$|\frac{\partial}{\partial x_i} H(x)|\le C$ for $x\in U_\epsilon$ and $i=1,\ldots, N$
 which implies the Lipschitz continuity.

For the second statement we use the new variables $y_i(t):=x_i(t)-x_{i+1}(t)> \epsilon$ for $x\in U_\epsilon$ and
 $i=1,\ldots, N-1$. Then
\begin{align}
\frac{dy_i}{dt}(t) =& \frac{1}{y_1+\cdots+y_i}+\frac{1}{y_i+\cdots+y_{N-1}}\notag\\
&+\sum_{j=1}^{i-1}  \Bigl( \frac{1}{y_{j+1}+\cdots+ y_{i}}-\frac{1}{y_j+\cdots+ y_{i-1}}\Bigr)\notag\\
&+\sum_{j=i+2}^{N} \Bigl( \frac{1}{y_{i}+\cdots+ y_{j-2}} - \frac{1}{y_{i+1}+\cdots+ y_{j-1}}\Bigr).\notag
\end{align}
For any $t\ge0$ choose some $i=i(t)$ for which $y_i(t)$ is minimal, i.e., $y_j(t)\ge y_i(t)$ for all $j$.
Notice that  $i=i(t)$ is not necessarily unique. However, for each $i=i(t)$ of this kind
 we have
\[\frac{1}{y_{i+1}(t)}\le \frac{1}{y_{i}(t)} \quad\quad\text{and}\quad\quad
\frac{1}{y_{i}(t)+\cdots+ y_{j-2}(t)} \ge \frac{1}{y_{i+1}(t)+\cdots+ y_{j-1}(t)} \quad(j=i+3,\ldots, N)\]
and
\[\frac{1}{y_{i-1}(t)}\le \frac{1}{y_{i}(t)} \quad\quad\text{and}\quad\quad
\frac{1}{y_{j+1}(t)+\cdots+ y_{i}(t)} \ge \frac{1}{y_{j}(t)+\cdots+ y_{i-1}(t)} \quad(j=1,\ldots, i-2).\]
Therefore,
\[\frac{dy_i}{dt}(t) \ge \frac{1}{y_{1}(t)+\cdots+ y_{i}(t)}+ \frac{1}{y_{i}(t)+\cdots+ y_{N-1}(t)}>0.\]
Hence, for each $t\ge0$ there is a neighborhood on which $y_i$ is increasing for each $i$, for which 
 $y_i(t)$ is minimal. This means 
 that $s\mapsto\min_{i=1,\ldots,N-1}y_i(s)$ is increasing in this neighborhood of $t$. This completes the proof of the lemma.
\end{proof}

It seems that the dynamical system from Lemma \ref{deterministic-boundary-A} can be solved explicitly
 only for a few cases like $N=2$ or particular starting points which are related to the zeros of the Hermite polynomial 
$H_N$. The latter is not surprising in  view of the LLs of \cite{AKM1}. 
To explain these solutions, we recall the following fact (see \cite{AKM1} and  Section 6.7 of \cite{S}):

\begin{lemma}\label{char-zero-A}
 For $y\in C_N^A$, the following statements  are equivalent:
\begin{enumerate}
\item[\rm{(1)}] The function $W(x):=2\sum_{i,j: i<j} \ln(x_i-x_j) -\|x\|^2/2$ is maximal at $y\in C_N^A$;
\item[\rm{(2)}] For $i=1,\ldots,N$:  $\frac{1}{2}y_i= \sum_{j: j\ne i} \frac{1}{y_i-y_j}$;
\item[\rm{(3)}] The vector 
\[z:=(z_1,\ldots,z_N):=(y_1/\sqrt2, \ldots,y_N/\sqrt2)\]
 consists of
 the ordered zeroes of the classical Hermite polynomial $H_N$.
\end{enumerate}
\end{lemma}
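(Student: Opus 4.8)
The plan is to prove the chain of equivalences by handling (1)$\Leftrightarrow$(2) with elementary convex analysis and (2)$\Leftrightarrow$(3) via the classical second-order differential equation satisfied by $H_N$.

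For (1)$\Leftrightarrow$(2) I would first observe that $W$ is smooth on the interior of $C_N^A$ and strictly concave there: $-\|x\|^2/2$ has Hessian $-I$, and each term $\ln(x_i-x_j)$ is the composition of the concave function $\ln$ with an affine map, hence concave on $\{x_i>x_j\}$. Next I would check that $W$ is proper on $C_N^A$: it tends to $-\infty$ as $x$ approaches $\partial C_N^A$ (some difference $x_i-x_j$ tends to $0^+$) and as $\|x\|\to\infty$ inside the chamber, because $\ln(x_i-x_j)\le x_i-x_j$ grows at most linearly while $-\|x\|^2/2$ dominates. Consequently $W$ attains its maximum at an interior point, and by strict concavity this maximizer is the unique critical point of $W$. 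Since $\partial W/\partial x_i = 2\sum_{j\ne i}(x_i-x_j)^{-1}-x_i$, the critical point equation is precisely the system in (2), so (1) and (2) both characterize this single point.

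For (2)$\Leftrightarrow$(3) I would set $z_i:=y_i/\sqrt 2$, which rewrites (2) as $z_i=\sum_{j\ne i}(z_i-z_j)^{-1}$. For the monic polynomial $p(x):=\prod_{i=1}^N(x-z_i)$ with distinct real zeros one has the standard identity $p''(z_i)/p'(z_i)=2\sum_{j\ne i}(z_i-z_j)^{-1}$, so the rewritten system is equivalent to $p''(z_i)-2z_ip'(z_i)=0$ for every $i$. The polynomial $p''-2xp'+2Np$ has degree at most $N-1$ (the leading terms cancel) and vanishes at all $N$ zeros of $p$, hence is identically zero; since $H_N$ is, up to a scalar, the unique polynomial solution of $u''-2xu'+2Nu=0$, this forces $p$ to be proportional to $H_N$, so the $z_i$ are the ordered zeros of $H_N$. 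Every step here is reversible; alternatively one can bypass the reversibility check, since by the first part the system (2) has a unique solution in the open chamber while the zeros of $H_N$ already supply one such solution through the Hermite ODE.

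The only points requiring a little care are the properness estimate — in particular confirming that the maximum of $W$ is genuinely attained in the \emph{interior} of $C_N^A$, which is what makes (2) an honest characterization rather than a condition on a possibly non-existent maximizer — and recalling the elementary identity $p''(z_i)/p'(z_i)=2\sum_{j\ne i}(z_i-z_j)^{-1}$. Both are routine once the strict concavity is in hand, and I anticipate no serious obstacle.
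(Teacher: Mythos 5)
Your proof is correct, and it is essentially the standard argument behind this classical fact. Note that the paper itself offers no proof of Lemma \ref{char-zero-A}: it quotes the statement from \cite{AKM1} and Section 6.7 of \cite{S}, where it appears as Stieltjes' electrostatic characterization of the Hermite zeros. Your write-up reconstructs exactly that argument in a self-contained way: strict concavity of $W$ on the open chamber (Hessian $\le -I$) plus the properness estimate $\ln t\le t$ gives existence and uniqueness of an interior maximizer and identifies (1) with the critical-point system (2); the substitution $z_i=y_i/\sqrt2$, the identity $p''(z_i)/p'(z_i)=2\sum_{j\ne i}(z_i-z_j)^{-1}$ for $p(x)=\prod_i(x-z_i)$, and the degree argument showing $p''-2xp'+2Np\equiv 0$ then pin $p$ down as a multiple of $H_N$, which is (2)$\Leftrightarrow$(3); the converse direction uses simplicity of the Hermite zeros (so that dividing by $p'(z_i)$ is legitimate), which you invoke implicitly and which is standard. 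The only cosmetic remark is that condition (2) only makes sense for $y$ in the open chamber, so the distinctness of the $z_i$ needed for your identity is automatic; your uniqueness shortcut via part (1) for the reverse implication is also fine. No gaps.
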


Part (3) of this lemma immediately leads to the following solution of the differential equation of
 Lemma \ref{deterministic-boundary-A}:

\begin{corollary}
For each $c>0$, a particular solution of the dynamical system in  Lemma \ref{deterministic-boundary-A}
is given by $\phi(t,c\cdot {z})= \sqrt{2t+c^2}\cdot {z} $.
\end{corollary}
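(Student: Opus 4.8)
The plan is to verify the claimed formula by a direct computation, the only real input being the algebraic characterization of $z$ in Lemma \ref{char-zero-A} together with the scaling behaviour of the vector field $H$ from Lemma \ref{deterministic-boundary-A}.

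First I would record the consequence of Lemma \ref{char-zero-A} that will be used. Writing $y=\sqrt2\,z$ in part (2) gives $\tfrac12\sqrt2\,z_i=\sum_{j\ne i}\bigl(\sqrt2\,z_i-\sqrt2\,z_j\bigr)^{-1}$, and multiplying by $\sqrt2$ yields
\[z_i=\sum_{j\ne i}\frac{1}{z_i-z_j}\qquad(i=1,\dots,N),\]
that is, $H(z)=z$. Since the zeros of $H_N$ are real and simple, $z$ lies in the interior of $C_N^A$, so $z\in U_\epsilon$ with $\epsilon:=d(z,\partial C_N^A)>0$. Next I would note that $H$ is homogeneous of degree $-1$: from its definition in Lemma \ref{deterministic-boundary-A}, $H(\lambda x)=\lambda^{-1}H(x)$ for every $\lambda>0$ and every $x$ in the interior of $C_N^A$.

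Now set $\phi(t):=\sqrt{2t+c^2}\cdot z$ for $t\ge0$. Since $c>0$ we have $\phi(0)=\sqrt{c^2}\,z=c\,z$, so the initial condition is met. Differentiating and using $\tfrac{d}{dt}\sqrt{2t+c^2}=(2t+c^2)^{-1/2}$ together with $H(z)=z$ and the degree $-1$ homogeneity of $H$,
\[\frac{d}{dt}\phi(t)=\frac{1}{\sqrt{2t+c^2}}\,z=\frac{1}{\sqrt{2t+c^2}}\,H(z)=H\!\left(\sqrt{2t+c^2}\cdot z\right)=H(\phi(t)).\]
Hence $\phi$ solves $\frac{d}{dt}x(t)=H(x(t))$ with $x(0)=c\,z$. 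Finally, because the interior of $C_N^A$ is a cone invariant under multiplication by positive scalars and $t\mapsto\sqrt{2t+c^2}$ is nondecreasing, $\phi(t)$ remains in $U_\epsilon$ for all $t\ge0$; by the Lipschitz continuity of $H$ on $U_\epsilon$ established in Lemma \ref{deterministic-boundary-A}, this is the unique solution, so $\phi(t,c\,z)=\sqrt{2t+c^2}\cdot z$ as claimed.

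There is essentially no obstacle here: the computation is mechanical once one observes that the Hermite identity $H(z)=z$ combines with the degree $-1$ homogeneity of $H$ to reduce the ODE along the ray $\mathbb R_{>0}\cdot z$ to the scalar equation $r\dot r=1$ for $r(t)=\sqrt{2t+c^2}$, whose solution with $r(0)=c$ is exactly $r(t)=\sqrt{2t+c^2}$.
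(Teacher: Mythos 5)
Your verification is correct and matches the paper's (unstated) argument: the corollary follows immediately from the Stieltjes-type identity $H(z)=z$ encoded in Lemma \ref{char-zero-A}(2)--(3) together with the degree $-1$ homogeneity of $H$, exactly as you compute. The only trivial imprecision is your choice $\epsilon:=d(z,\partial C_N^A)$, which for $c<1$ need not contain the starting point $c\,z$; taking any $\epsilon<c\cdot d(z,\partial C_N^A)$ fixes this and nothing else changes.
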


Notice that on an informal level the same statement holds also for $c=0$.

We now turn to the main result of this section, a locally uniform strong LL with a strong  order of convergence:

\begin{theorem}\label{SLLN-A} Let $x$ be a point in the interior of $C_N^A$, and let $y\in \mathbb R^N$. 
Let $k_0\ge 1/2$ with $\sqrt k \cdot x+y$ in the interior of $C_N^A$ for $k\ge k_0$.

For $k\ge k_0$ consider the Bessel processes $(X_{t,k})_{t\ge0}$ of type $A_{N-1}$ on   $C_N^A$, started at
$\sqrt k\cdot x+y$, and which satisfy the SDEs
\[ dX_{t,k}^i = dB_t^i+ k\sum_{j\ne i} \frac{1}{X_{t,k}^i-X_{t,k}^j}dt \quad\quad(i=1,\ldots,N).\]
Then, for all $t>0$,
\[\sup_{0\le s\le t, k\geq k_0}\|X_{s,k}-\sqrt k \phi(s,x) \|<\infty\]
 almost surely.
In particular,
\[X_{t,k}/\sqrt k\to \phi(t,x) \quad\quad\text{for}\quad\quad k\to\infty\]
locally uniformly in $t$ almost surely and thus locally uniformly in $t$ in probability.
\end{theorem}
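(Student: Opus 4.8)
The plan is to compare the renormalized Bessel process $\tilde X_{t,k}:=X_{t,k}/\sqrt k$, which by \eqref{SDE-A-normalized} satisfies
\[
d\tilde X_{t,k}^i=\frac{1}{\sqrt k}\,dB_t^i+\sum_{j\ne i}\frac{1}{\tilde X_{t,k}^i-\tilde X_{t,k}^j}\,dt
= \frac{1}{\sqrt k}\,dB_t^i + H(\tilde X_{t,k})^i\,dt,
\]
started at $\tilde X_{0,k}=x+y/\sqrt k$, with the deterministic curve $\phi(\cdot,x)$ solving $\frac{d}{dt}\phi=H(\phi)$, $\phi(0,x)=x$. First I would fix $t>0$ and choose $\epsilon>0$ so small that the entire compact arc $\{\phi(s,x):0\le s\le t\}$ stays inside $U_{\epsilon}$ (possible by Lemma \ref{deterministic-boundary-A}, which also guarantees $\phi(s,x)\in U_\epsilon$ for \emph{all} $s\ge0$ once $x\in U_\epsilon$); by Lemma \ref{deterministic-boundary-A}, $H$ is Lipschitz on $U_\epsilon$ with some constant $L_\epsilon$. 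The key quantity is the difference $D_{s,k}:=\tilde X_{s,k}-\phi(s,x)$, for which, as long as $\tilde X_{s,k}$ stays in $U_{\epsilon/2}$,
\[
D_{t,k}= D_{0,k}+\frac{1}{\sqrt k}B_t+\int_0^t\bigl(H(\tilde X_{s,k})-H(\phi(s,x))\bigr)\,ds,
\]
so that $\|D_{t,k}\|\le \|y\|/\sqrt k+\frac{1}{\sqrt k}\|B_t\|+L_{\epsilon/2}\int_0^t\|D_{s,k}\|\,ds$.

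The main step is a Gr\"onwall argument combined with a stopping-time / localization device to ensure the iterate never leaves the region where the Lipschitz bound is valid. Concretely, for each $k$ let $\tau_k:=\inf\{s\ge0:\tilde X_{s,k}\notin U_{\epsilon/2}\}$. On $[0,t\wedge\tau_k]$ the above inequality and Gr\"onwall's lemma give
\[
\sup_{0\le s\le t\wedge\tau_k}\|D_{s,k}\|\le \Bigl(\frac{\|y\|}{\sqrt k}+\frac{1}{\sqrt k}\sup_{0\le s\le t}\|B_s\|\Bigr)e^{L_{\epsilon/2}t}=: \frac{R_k}{\sqrt k},
\]
where $R_k:=(\|y\|+\sup_{0\le s\le t}\|B_s\|)e^{L_{\epsilon/2}t}$ is an a.s. finite random variable \emph{not depending on $k$}. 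Since the distance from $\phi(s,x)$ to $\partial C_N^A$ exceeds $\epsilon$, once $R_k/\sqrt k<\epsilon/2$ — which happens for all $k\ge K(\omega)$ with $K$ a.s. finite — the process $\tilde X_{s,k}$ cannot reach $\partial U_{\epsilon/2}$ on $[0,t]$, forcing $\tau_k>t$. Hence for $k\ge K(\omega)$ the bound $\sup_{0\le s\le t}\|D_{s,k}\|\le R_k/\sqrt k$ holds unconditionally; for the finitely many $k$ with $k_0\le k<K(\omega)$ the quantity $\sup_{0\le s\le t}\|X_{s,k}-\sqrt k\,\phi(s,x)\|=\sqrt k\sup_{0\le s\le t}\|D_{s,k}\|$ is a.s. finite by Theorem \ref{SDE-basic} (each $X_{\cdot,k}$ has continuous paths, and stays in the interior, so the path is bounded on $[0,t]$). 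Taking the supremum over all $k\ge k_0$ then gives $\sup_{0\le s\le t,\,k\ge k_0}\|X_{s,k}-\sqrt k\,\phi(s,x)\|<\infty$ a.s.; dividing by $\sqrt k$ and sending $k\to\infty$ yields the claimed locally uniform (hence in probability) convergence $X_{t,k}/\sqrt k\to\phi(t,x)$ with error $O(1/\sqrt k)$.

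The part I expect to be the main obstacle is making the localization rigorous: the Lipschitz estimate on $H$ is only available away from the boundary, but a priori one does not know that $\tilde X_{s,k}$ stays away from $\partial C_N^A$ uniformly in $k$ (Theorem \ref{SDE-basic} only guarantees the process does not \emph{hit} the boundary, not that it stays in a fixed $U_{\epsilon/2}$). The circular-looking issue — we need the path to stay in $U_{\epsilon/2}$ to apply Gr\"onwall, but we need Gr\"onwall to show it stays there — is resolved by the stopping-time argument above: the Gr\"onwall estimate is valid on $[0,t\wedge\tau_k]$ unconditionally, and the resulting bound, being $O(1/\sqrt k)$ with a $k$-independent random constant, is itself what eventually prevents $\tau_k\le t$. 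One must also check that $\phi(\cdot,x)$ is defined on all of $[0,\infty)$ (no blow-up): this follows since $H$ is Lipschitz, hence of linear growth, on each $U_\epsilon$, and Lemma \ref{deterministic-boundary-A} keeps the trajectory inside $U_\epsilon$, so the solution extends globally.
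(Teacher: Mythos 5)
Your proposal is correct and follows the paper's strategy in all essential respects: renormalize to $\tilde X_{t,k}=X_{t,k}/\sqrt k$ as in \eqref{SDE-A-normalized}, compare with the flow $\phi(\cdot,x)$ of Lemma \ref{deterministic-boundary-A} (whose invariance of $U_\epsilon$ also gives global existence of $\phi$), localize via the exit time from a fixed neighbourhood on which the drift $H$ is Lipschitz, obtain on the stopped interval a bound of the form $\frac{1}{\sqrt k}\bigl(\|y\|+\sup_{s\le t}\|B_s\|\bigr)e^{Lt}$ with a $k$-independent random constant, and then bootstrap: since this bound is eventually smaller than the distance of the deterministic trajectory to the boundary, the exit time must exceed $t$ for large $k$, exactly as in \eqref{boundary-away-A}--\eqref{uniform-final}. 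The one methodological difference is how the key estimate is derived: the paper performs Picard iterations for both the SDE and the ODE, proves the bound \eqref{est-approximations-A} for each iterate by induction, and then passes to the limit of the iterates (convergence in probability, a.s.\ along a subsequence), whereas you apply Gr\"onwall's inequality directly to $\|\tilde X_{s,k}-\phi(s,x)\|$ on $[0,t\wedge\tau_k]$. Your route is the more economical one and is legitimate here, since on (the closure of) $U_{\epsilon/2}$ the drift is bounded and Lipschitz, so the pathwise integral inequality needed for Gr\"onwall holds up to the stopping time; both routes produce the same constant $C_t e^{Lt}/\sqrt k$ and the same bootstrap, so nothing is lost.

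One inaccuracy to flag: your phrase ``the finitely many $k$ with $k_0\le k<K(\omega)$'' is not correct, because $k$ is a continuous parameter, so this range is uncountable and the a.s.\ finiteness of each individual path supremum does not by itself yield a finite supremum over the whole range. Note, however, that your argument already gives the uniform bound $\sup_{s\le t}\|X_{s,k}-\sqrt k\,\phi(s,x)\|\le\bigl(\|y\|+\sup_{s\le t}\|B_s\|\bigr)e^{Lt}$ simultaneously for all $k$ beyond the (random or, on the events $\Omega_M=\{C_t\le M\}$ used in the paper, deterministic) threshold, which is what drives both the a.s.\ finiteness claim and the locally uniform convergence; the intermediate range of $k$ is treated equally loosely in the paper's own proof (it enlarges $k_0$ as a function of $M$ and declares the first statement ``clear''), so this is a shared blemish rather than a defect of your approach. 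You should also state explicitly, as the paper does, that $\epsilon$ can be chosen so small that the initial points $x+y/\sqrt k$ lie in $U_\epsilon$ for all $k\ge k_0$ (possible since $d(x+y/\sqrt k,\partial C_N^A)$ is continuous in $k$ and tends to $d(x,\partial C_N^A)>0$), rather than only obtaining this implicitly for $k\ge K(\omega)$.
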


\begin{proof}
Recall that the processes $(\tilde X_{t,k}:=X_{t,k}/\sqrt k)_{t\ge0}$ satisfy
\[\tilde X_{t,k}^i =\frac{1}{\sqrt k}(y_i+B_t^i) +x_i + \int_0^t\sum_{j\ne i} 
 \frac{1}{\tilde X_{s,k}^i-\tilde X_{s,k}^j}ds\quad\quad(i=1,\ldots,N). \]
We compare the solutions of these SDEs with the solution $Y_t=\phi(t,x)$ ($t\ge0$) of the deterministic equation
\[Y_t^i =x_i + \int_0^t\sum_{j\ne i} \frac{1}{Y_{s}^i-Y_{s}^j}ds\quad\quad (i=1,\ldots,N) \]
of Lemma \ref{deterministic-boundary-A}.
For both equations we perform Picard iterations as follows. We set the starting points at
\[\tilde  X_{t,k,0}:= Y_{t,0}:=x\]
and, for $m\ge0$, we set the recursions
\[\tilde  X_{t,k,m+1}^i:= \frac{1}{\sqrt k}(y_i+B_t^i) +x_i + \int_0^t\sum_{j\ne i} 
\frac{1}{\tilde X_{s,k,m}^i-\tilde X_{s,k,m}^j}ds  \quad\quad(i=1,\ldots,N)\]
and
\[Y_{t,m+1}^i := x_i + \int_0^t\sum_{j\ne i} \frac{1}{Y_{s,m}^i-Y_{s,m}^j}ds
\quad\quad(i=1,\ldots,N).\]

For given points $x,y$ and given $k_0$ as in the statement, we find
 $\epsilon>0$ small enough
 that $x+y/\sqrt k\in U_\epsilon$ for $k\ge k_0$ 
where  $U_\epsilon$ is given as in Lemma \ref{deterministic-boundary-A}.
Consider the stopping times 
\[T_{\epsilon,k}:=\inf\{t>0:\> \tilde  X_{t,k}\not\in U_\epsilon\}.\]
We  study the stopped maximal differences
\[ D_{t,k,m,\epsilon}:=\sup_{s\in [0,t\wedge T_{\epsilon,k}]}\| \tilde  X_{s,k,m}-Y_{s,m}\| 
\quad\quad(t\ge0, m\ge0)\]
with $D_{t,k,0,\epsilon}=0$. Using the Lipschitz constants $L_\epsilon>0$  on 
$U_\epsilon$ as in  Lemma \ref{deterministic-boundary-A}, we obtain
\begin{equation*}
D_{t,k,m+1,\epsilon} \le \frac{1}{\sqrt k}(\|y\|+ \sup_{s\in [0,t]}\|B_s\| ) +
 L_\epsilon\cdot \sup_{s\in [0,t]} \int_0^s D_{u,k,m,\epsilon}\> du. \notag
\end{equation*}
Induction on $m$ shows that for all $m$,
\begin{align}\label{est-approximations-A}
D_{t,k,m,\epsilon} &\le\frac{1}{\sqrt k}(\|y\|+ \sup_{s\in [0,t]}\|B_s\| )\cdot\Bigl( \sum_{l=0}^{m-1} 
(L_\epsilon t)^l \frac{1}{l!}\Bigr) \notag\\
 &\le\frac{1}{\sqrt k}(\|y\|+ \sup_{s\in [0,t]}\|B_s\| )\cdot e^{L_\epsilon t}.
\end{align}
On the other hand, it is well known from the classical theory of SDEs 
(see, for example, Theorems 7 and 8 of Section V.3 of 
\cite{P}) that under a Lipschitz condition, for
$m\to\infty$,
\[\sup_{s\in [0,t\wedge T_{\epsilon,k}]}\|\tilde  X_{s,k,m}-\tilde  X_{s,k}\| \to0
\quad\text{and}\quad \sup_{s\in [0,t\wedge T_{\epsilon,k}]}\| Y_{s,m}-  Y_{s}\| \to0\]
in probability. This means that some subsequence  converges almost surely.
We conclude from (\ref{est-approximations-A}) that
\begin{equation}\label{est-limit-A}
\sup_{s\in [0,t\wedge T_{\epsilon,k}]}\|\tilde X_{s,k}-Y_{s}\|\le \frac{1}{\sqrt{k}}\cdot C_t\cdot  e^{L_\epsilon t}
\end{equation}
almost surely where $C_t:=\|y\|+ \sup_{s\in [0,t]}\|B_s\| $ is a random variable
which is almost surely finite. 

We now consider the events $\Omega_M:=\{\omega:\> C_t(\omega)\le M\}$ for $M\in\mathbb N$
which satisfy $P(\Omega_M)\to1$ for $M\to\infty$. For given $x,t,M$ and $\epsilon$ we enlarge
 $k_0:=k_0(x,y,t,M,\epsilon)$ such that in addition,
\[\frac{1}{\sqrt k_0}M\cdot  e^{L_\epsilon t}< \frac{d(x,\partial C_N^A)-\epsilon}{2}.\]
Then, for $k\ge k_0$ and $\omega\in \Omega_M$,
\[\sup_{s\in [0,t\wedge T_{\epsilon,k}(\omega)]}\|\tilde X_{s,k}(\omega)-Y_{s}\|< \frac{d(x,\partial C_N^A)-\epsilon}{2}.\]
As $d(Y_s, \partial C_N^A)\ge d(x, \partial C_N^A)$ for $s\ge0$ 
by Lemma \ref{deterministic-boundary-A}, we see that for $s\in [0,t\wedge T_{\epsilon,k}(\omega)] $,
 \begin{equation}\label{boundary-away-A}
d(\tilde X_{s,k}(\omega),\partial C_N^A)\ge \frac{d(x,\partial C_N^A)+\epsilon}{2}> \epsilon.
\end{equation}
Because the paths of the Bessel processes we consider are almost surely continuous, we conclude that $T_{\epsilon,k}(\omega)=\infty$ and thus
$\tilde X_{s,k}(\omega)\in U_\epsilon$ for all $s\in [0,t]$,
 $\omega\in \Omega_M$ and $k\ge k_0$. Hence, for  $\omega\in \Omega_M$ and $k\ge k_0$,
\begin{equation}\label{uniform-final}
\sup_{s\in [0,t]}\|\tilde X_{s,k}(\omega)-Y_{s}\|\le \frac{1}{\sqrt k_0}M\cdot  e^{L_\epsilon t}.
\end{equation}
As $P(\Omega_M)\to1$ for $M\to\infty$,  the first statement of the theorem is clear, and
the second statement follows immediately from taking the limit $k_0\to\infty$, which forces $k\to\infty$.
\end{proof}

\begin{remark} Theorem \ref{SLLN-A} can be easily generalized to the case where the points $x,y\in\mathbb R^N$
are independent random variables $X,Y$ which are also independent of the Brownian motion $(B_t)_{t\ge0}$. 

In fact, if $X$ has values in an open subset $U_\epsilon$ of $C_N^A$ as described
 in Lemma \ref{deterministic-boundary-A}, and if there exists $k_0>0$ such that $\sqrt k X+Y$ has values in 
 $C_N^A$ for $k\ge k_0$, then the proof of Theorem \ref{SLLN-A}  still holds, that is, we obtain
 that the Bessel processes
$( X_{t,k})_{t\ge0}$ with $X_{0,k}=\sqrt k X +Y$ satisfy
\begin{equation}\label{uniform-final-random}
sup_{s\in [0,t]}\| X_{s,k}(\omega)-\sqrt k\cdot \phi(s,X)\| <\infty \quad\quad\text{a.s..}
\end{equation}

Moreover, if ${\bf P}(X\in\partial C_N^A)=0$, and if  $\sqrt k X+Y$ has values in 
 $C_N^A$ for $k\ge k_0$ given $k_0>0$, then 
\[k^\alpha ( X_{t,k}/\sqrt k -\phi(t,X))\to0\]
for all $\alpha<1/2$ in probability. This also follows immediately from Theorem \ref{SLLN-A}
and the fact that ${\bf P}(X\in\partial C_N^A)=0$ implies that  ${\bf P}(X\in U_{1/n})\to1$
for $n\to\infty$.

We also remark that the limiting laws \ref{SLLN-B1}, \ref{SLLN-B2}, and \ref{SLLN-D}
 below for the root systems $B_N$ and $D_N$ and fixed starting points can be also extended to
 random starting points in the same way.
\end{remark}

\section{Strong limiting laws for the root system $B_{N}$}

In this section we derive LLs in the case  $B_{N}$ for the two freezing regimes
from the introduction for starting points 
in the interior of  $C_N^B$. 
In both cases we  consider $k=(k_1,k_2)$ with $k_1,k_2>0$, and study
 Bessel process $(X_{t,k})_{t\ge0}$ which are solutions of (\ref{SDE-general}). In the B-case, 
 the SDE (\ref{SDE-general})
 reads
\begin{equation}\label{SDE-B}
 dX_{t,k}^i = dB_t^i+ k_2\sum_{j\ne i} \Bigl(\frac{1}{X_{t,k}^i-X_{t,k}^j} +   \frac{1}{X_{t,k}^i+X_{t,k}^j}\Bigr) dt +
 \frac{k_1}{X_{t,k}^i} dt
\end{equation}
for $i=1,\ldots,N$
with an $N$-dimensional Brownian motion $(B_t^1,\ldots,B_t^N)_{t\ge0}$.

The two freezing regimes 
have to be handled differently from the previous, $A_{N-1}$, case.
We start with the case
 $(k_1,k_2)=(\nu\cdot \beta,\beta)$
with $\nu>0$ fixed and $\beta\to\infty$ which was studied in \cite{AKM2, AM} for the case of a fixed
starting distribution on $C_N^B$.
Similar to the $A_{N-1}$ case, 
we study the renormalized processes $(\tilde X_{t,k}:=X_{t,k}/\sqrt {\beta})_{t\ge0}$ which satisfy
\begin{equation}\label{SDE-B1-normalized}
d\tilde X_{t,k}^i =\frac{1}{\sqrt \beta}dB_t^i + \sum_{j\ne i} 
\Bigl( \frac{1}{\tilde X_{t,k}^i-\tilde X_{t,k}^j} +   \frac{1}{\tilde X_{t,k}^i+\tilde X_{t,k}^j}\Bigr)dt
+
 \frac{\nu}{\tilde X_{t,k}^i} dt \end{equation}
for $i=1,\ldots,N$.
We again compare   $\tilde X_{t,k}$ with the solution of a deterministic dynamical system.

\begin{lemma}\label{deterministic-boundary-B1}
Let $\nu>0$. For $\epsilon>0$ consider the open subset
 \[U_\epsilon:=\{x\in C_N^B:\> x_N>\frac{\epsilon \nu}{N-1}, \quad\text{and}\quad
x_i-x_{i+1}>\epsilon \quad\text{for}\quad i=1,\ldots,N-1  \}.\]
Then $\cup_{\epsilon>0}U_\epsilon$ is the interior of $C_N^B$, and
 the function 
\[H:U_\epsilon\to \mathbb R^N, \quad x\mapsto 
\left(\begin{matrix}\sum_{j\ne1}\Bigl( \frac{1}{x_1-x_j}+   \frac{1}{x_1+x_j}\Bigr)+
\frac{\nu}{x_1}\\
\vdots\\
\sum_{j\ne N} \Bigl(\frac{1}{x_N-x_j}+   \frac{1}{x_N+x_j}\Bigr)+
\frac{\nu}{x_N}
\end{matrix}\right)\]
is Lipschitz continuous on $U_\epsilon$ with Lipschitz constant $L_\epsilon>0$. Moreover,
 for each starting point $x_0\in U_\epsilon$, the solution $\phi(t,x_0)$
of the dynamical system $\frac{dx}{dt}(t) =H(x(t))$ satisfies $\phi(t,x_0)\in U_\epsilon$ for all $t\ge0$.
\end{lemma}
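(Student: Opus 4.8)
The plan is to mirror the proof of Lemma~\ref{deterministic-boundary-A}, splitting the argument into the set identity (that $\bigcup_{\epsilon>0}U_\epsilon$ is the interior of $C_N^B$), the Lipschitz estimate on $U_\epsilon$, and the forward invariance of $U_\epsilon$ under the flow of $H$. The first two parts are routine; the invariance carries the new difficulty, namely controlling the extra drift coming from the mirror terms $1/(x_i+x_j)$ and the singular term $\nu/x_i$.

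For the identity: a point $x$ in the interior of $C_N^B$ has $x_1>\dots>x_N>0$, so $\epsilon_0:=\min\!\big(\min_i(x_i-x_{i+1}),\,(N-1)x_N/\nu\big)>0$ and $x\in U_\epsilon$ for every $\epsilon<\epsilon_0$; the reverse inclusion is immediate since each $U_\epsilon$ consists of interior points. For the Lipschitz bound, on $U_\epsilon$ one has $x_i-x_j\ge(j-i)\epsilon\ge\epsilon$ for $i<j$, $x_i\ge x_N>\epsilon\nu/(N-1)$, and $x_i+x_j\ge 2x_N>2\epsilon\nu/(N-1)$; hence each summand of every component of $H$, and each of its partial derivatives, is bounded by a constant depending only on $\epsilon,N,\nu$. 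Since $U_\epsilon$ is an intersection of half-spaces it is convex, and the uniform bound on $DH|_{U_\epsilon}$ gives a Lipschitz constant $L_\epsilon$ by the mean value inequality.

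For the invariance I would use a first-exit-time argument in the gap coordinates $y_i(t):=x_i(t)-x_{i+1}(t)$, $i=1,\dots,N-1$, so that $x\in U_\epsilon$ iff $y_i>\epsilon$ for all $i$ and $x_N>\epsilon\nu/(N-1)$. Put $\tau:=\inf\{t\ge0:\phi(t,x_0)\notin U_\epsilon\}$ and assume $\tau<\infty$; then all constraints hold with $\ge$ on $[0,\tau]$ and at least one is an equality at $\tau$. The constraint on $x_N$ cannot be the first to fail: a direct computation gives $\frac{d}{dt}x_N=\frac{\nu}{x_N}-\sum_{j<N}\frac{2x_N}{x_j^2-x_N^2}$, and on $\overline{U_\epsilon}$ one has $x_j^2-x_N^2=(x_j-x_N)(x_j+x_N)>2\epsilon x_N$ for $j<N$, whence $\sum_{j<N}\frac{2x_N}{x_j^2-x_N^2}<\frac{N-1}{\epsilon}\le\frac{\nu}{x_N}$ as soon as $x_N\le\epsilon\nu/(N-1)$; thus $\frac{d}{dt}x_N>0$ at any boundary point where $x_N$ is at the threshold. (This is exactly why the threshold is $\epsilon\nu/(N-1)$.) Consequently at $\tau$ some gap must satisfy $y_i(\tau)=\epsilon$, and since the other gaps are $\ge\epsilon$ this $y_i$ is minimal; to conclude one needs $\frac{d}{dt}y_i(\tau)>0$.

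This last point is the main obstacle. Grouping $\frac{d}{dt}y_i=H_i-H_{i+1}$ by the remaining index $j$ yields, exactly as in Lemma~\ref{deterministic-boundary-A},
\[\frac{d}{dt}y_i=\frac{2}{y_i}-y_i\!\!\sum_{j\notin\{i,i+1\}}\!\!\frac{1}{(x_i-x_j)(x_{i+1}-x_j)}-y_i\Bigl(\frac{\nu}{x_ix_{i+1}}+\!\!\sum_{j\notin\{i,i+1\}}\!\!\frac{1}{(x_i+x_j)(x_{i+1}+x_j)}\Bigr).\]
When $y_i$ is minimal the first two terms together are bounded below by $\frac1{y_1+\dots+y_i}+\frac1{y_i+\dots+y_{N-1}}>0$, by the telescoping estimate of Lemma~\ref{deterministic-boundary-A}; the bracket in the last term, however, enters with the wrong sign, and it is only $O(1/x_N^2)=O(1/\epsilon^2)$, so after multiplication by $y_i\approx\epsilon$ it is of the same order $1/\epsilon$ as the repulsion $2/y_i$. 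A crude comparison is therefore not conclusive: one must retain more of the favourable telescoping terms — using that a minimal gap feels the dominant repulsion $2/y_i$ from its neighbour — and exploit the a priori bound $x_N\ge\epsilon\nu/(N-1)$ valid on $[0,\tau]$, which controls the $\nu/(x_ix_{i+1})$ and $1/\big((x_i+x_j)(x_{i+1}+x_j)\big)$ contributions. In effect the gap constraints and the $x_N$ constraint must be treated jointly, since the configurations that endanger $\frac{d}{dt}y_i$ (several particles crowded near $0$) are precisely those in which the $x_N$ constraint is nearly active and $\frac{d}{dt}x_N$ is strongly positive. A softer substitute, sufficient for the applications in Section~4, is to observe that $H=\nabla V$ for $V(x)=\sum_{i<j}\ln(x_i-x_j)+\sum_{i<j}\ln(x_i+x_j)+\nu\sum_i\ln x_i$, so that $V(\phi(t,x_0))$ is non-decreasing and $\|\phi(t,x_0)\|^2$ grows linearly in $t$; this keeps $\phi(t,x_0)$ in the interior of $C_N^B$ for all $t$, hence inside some $U_{\epsilon'}$ on every compact time interval.
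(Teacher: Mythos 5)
Your set identity, convexity/Lipschitz argument, and your treatment of the $x_N$-threshold are fine and coincide with the paper's proof: the paper also passes to the variables $y_i=x_i-x_{i+1}$ ($i\le N-1$) and $y_N=\frac{N-1}{\nu}x_N$, and its case $i=N$ is exactly your estimate $\frac{d}{dt}x_N\ge \frac{\nu}{x_N}-\frac{N-1}{x_{N-1}-x_N}$, rewritten there as $\frac{(N-1)^2}{\nu}\bigl(\frac{1}{y_N}-\frac{1}{y_{N-1}}\bigr)\ge0$ when $y_N$ is minimal. The step you could not close --- a minimal gap $y_i$ with $i\le N-1$ --- is the step the paper settles in one sentence: it claims that since each component of the $B_N$-drift exceeds the corresponding component of the $A_{N-1}$-drift, the telescoping estimate of Lemma \ref{deterministic-boundary-A} carries over and yields $\frac{dy_i}{dt}\ge0$. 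Your objection to exactly this comparison is justified: in the difference $H_i-H_{i+1}$ the additional terms contribute $-y_i\bigl(\frac{\nu}{x_ix_{i+1}}+\sum_{j\ne i,i+1}\frac{1}{(x_i+x_j)(x_{i+1}+x_j)}\bigr)<0$, so componentwise domination of the drifts proves nothing about the gaps. In fact your worry is substantiated by a concrete configuration: for $N=3$, $\nu=0.1$, $x=(2.05,\,1.05,\,0.05)$ one has $y_1=y_2=\frac{N-1}{\nu}x_3=1$, while $\frac{d}{dt}(x_2-x_3)=H_2-H_3\approx-0.56<0$ (the wall term $\frac{\nu}{x_3}=2$ pushes $x_3$ into $x_2$ faster than $x_2$ escapes). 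Hence $\min_j y_j$ strictly decreases along the flow started there, and $U_\epsilon$ with $\epsilon$ slightly below $1$ is not forward invariant from such a point; so for small $\nu$ neither the paper's one-line comparison nor any argument using only the minimality of $y_i$ can close this case, and the invariance claim in its stated generality would need a modified $U_\epsilon$ or a restriction on $\nu$.

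This, however, does not make your proposal a proof. You explicitly leave the minimal-gap case open, and your ``softer substitute'' establishes a weaker statement than the lemma: monotonicity of $V$ along the gradient flow, combined with $\|\phi(t,x_0)\|^2=\|x_0\|^2+2N(N-1+\nu)t$, gives --- after combining the lower bound $V(\phi(t,x_0))\ge V(x_0)$ with the upper bound on each logarithm coming from the norm growth, a step you assert but do not carry out --- that $\phi(t,x_0)$ stays in the open chamber and in some $U_{\epsilon'(T)}$ for $t\in[0,T]$, not that each $U_\epsilon$ is invariant. That weaker conclusion is likely sufficient, with small modifications of the stopping-time argument, for the intended application (Theorem \ref{SLLN-B1} in Section 3; the Section 4 CLT concerns the other freezing regime), but as a proof of the lemma as stated your attempt is incomplete --- at, it must be said, precisely the point where the paper's own argument is also inadequate.
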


\begin{proof} There exits a constant $\tilde \epsilon>0$ such that for all
 $x\in U_\epsilon$ and $i\ne j$ we have $|x_i\pm x_j|>\tilde\epsilon$ and $x_i>\tilde\epsilon$.
 Hence there is a constant $C>0$ with
$|\frac{\partial H}{\partial x_i}(x)|\le C$ for $x\in U_\epsilon$ and $i=1,\ldots, N$
 which implies the Lipschitz continuity.

We now proceed as in the proof of Lemma \ref{deterministic-boundary-B1}
and use the new variables
 $y_i(t):=x_i(t)-x_{i+1}(t)> \epsilon$ for  $i=1,\ldots, N-1$ as well as $y_N(t):= \frac{N-1}{\nu}\cdot x_N(t)$. 
For any $t\ge0$ we choose $i=i(t)$ for which $y_i(t)$ is minimal, i.e., $y_j(t)\ge y_i(t)$ for all $j$.
If  $i\in\{1,\ldots,N-1\}$, then the estimations in the proof of  Lemma \ref{deterministic-boundary-B1}
immediately imply  $\frac{dy_i}{dt}(t)\ge0$, as the right hand side of the dynamical system here 
is clearly greater than the right hand side of the  system in Lemma \ref{deterministic-boundary-A}.

Moreover, for $i=N$ (that is, if $y_N(t)=\min_{1\leq j\leq N}y_j(t)$),
\begin{align}
\frac{dy_N}{dt}(t)&=\frac{N-1}{\nu} \Bigl[\sum_{j\ne N} \Bigl(\frac{1}{x_N(t)-x_j(t)}+   \frac{1}{x_N(t)+x_j(t)}\Bigr)+
\frac{\nu}{x_N(t)} \Bigr]\notag\\
&\ge \frac{N-1}{\nu} \Bigl[\frac{\nu}{x_N(t)} - \frac{N-1}{x_{N-1}-x_N(t)}\Bigr] \ =
\ \frac{(N-1)^2}{\nu} \Bigl( \frac{1}{y_N(t)} - \frac{1}{y_{N-1}(t)} \Bigr)
\ge0.\notag
\end{align}
In summary, we see that $\min_{i=1,\ldots,N}y_i(t)$ is increasing in $t$. This completes the proof.
\end{proof}

As in the $A_{N-1}$ case, it seems difficult to solve  the dynamical systems of Lemma \ref{deterministic-boundary-B1} except
 for a few cases like $N=1$ or particular starting points which are related to the zeros of certain Laguerre polynomials.
 The latter is not surprising in  view of the LLs of \cite{AKM2}. 
To explain this, we recapitulate the following fact; see \cite{AKM2} and  Section 6.7 of \cite{S} and notice that our
parameters $(\beta,\nu)$ correspond to the parameters $(\beta/2, \nu+1/2)$ in  \cite{AKM2}:

\begin{lemma}\label{char-zero-B1}
Let $\nu>0$. For $y\in C_N^B$, the following statements  are equivalent:
\begin{enumerate}
\item[\rm{(1)}] The function 
\[W(x):=2\sum_{ i<j} \ln(x_i^2-x_j^2) +2\nu \sum_{i}\ln x_i-\|x\|^2/2\]
 is maximal at $y\in C_N^B$;
\item[\rm{(2)}] For $i=1,\ldots,N$, 
\[\frac{1}{2}y_i= \sum_{j: j\ne i} \frac{2y_i}{y_i^2-y_j^2} +\frac{\nu}{y_i}=\sum_{j: j\ne i} \Bigl(
\frac{1}{y_i-y_j} +\frac{1}{y_i+y_j}\Bigr) +\frac{\nu}{y_i} ;\] 
\item[\rm{(3)}] If $z_1^{(\nu-1)},\ldots,z_N^{(\nu-1)}$ are the ordered zeros of 
 the classical  Laguerre polynomial $L_N^{(\nu-1)}$ (where the  $L_N^{(\nu-1)}$ are
orthogonal w.r.t. the density $e^{-x}\cdot x^{\nu-1}$), then 
\begin{equation}\label{y-max-B1}
2(z_1^{(\nu-1)},\ldots, z_N^{(\nu-1)})= (y_1^2, \ldots, y_N^2).
\end{equation}
\end{enumerate}
\end{lemma}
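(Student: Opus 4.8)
The plan is to establish the two equivalences $(1)\Leftrightarrow(2)$ and $(2)\Leftrightarrow(3)$ separately. The first is elementary differential calculus together with a convexity argument; the second is the classical Stieltjes-type electrostatic characterization of the zeros of Laguerre polynomials, obtained from the Laguerre differential equation. Throughout one works on the interior of $C_N^B$, where all the expressions occurring in $(2)$ are finite (on the boundary some denominators vanish, so $(2)$ can only hold in the interior anyway), and where the map $x\mapsto(x_1^2,\ldots,x_N^2)$ is a diffeomorphism onto the convex set $\{u\in\b R^N:\ u_1>u_2>\cdots>u_N>0\}$.

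For $(1)\Leftrightarrow(2)$ I would first differentiate $W$. Using $\frac{\partial}{\partial x_i}\ln(x_i^2-x_j^2)=\frac{2x_i}{x_i^2-x_j^2}$ and collecting the symmetric contributions one obtains
\[\frac{\partial W}{\partial x_i}(x)=\sum_{j\ne i}\frac{4x_i}{x_i^2-x_j^2}+\frac{2\nu}{x_i}-x_i ,\]
so that $\nabla W(y)=0$ is exactly the system in $(2)$, after dividing by $2$ and using the partial fraction decomposition $\frac{2y_i}{y_i^2-y_j^2}=\frac{1}{y_i-y_j}+\frac{1}{y_i+y_j}$. To see that a critical point is automatically the maximizer, I would pass to the variables $u_i:=x_i^2$, in which
\[W=2\sum_{i<j}\ln(u_i-u_j)+\nu\sum_i\ln u_i-\tfrac12\sum_i u_i .\]
Each $\ln(u_i-u_j)$ is concave on $\{u_1>\cdots>u_N>0\}$ and $\nu\sum_i\ln u_i-\tfrac12\sum_i u_i$ is strictly concave, hence $W$ is strictly concave there; moreover $W(x)\to-\infty$ as $x$ tends to $\partial C_N^B$ or $\|x\|\to\infty$, the behavior near the face $x_N=0$ being where $\nu>0$ enters. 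Thus $W$ has a unique maximizer, which is the unique critical point, giving $(1)\Leftrightarrow(2)$.

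For $(2)\Leftrightarrow(3)$ I would multiply the $i$-th equation of $(2)$ by $y_i>0$ and set $u_i:=y_i^2$, turning $(2)$ into $\tfrac12 u_i=\sum_{j\ne i}\frac{2u_i}{u_i-u_j}+\nu$, and then, with $t_i:=u_i/2$, into $2\sum_{j\ne i}\frac{1}{t_i-t_j}=1-\frac{\nu}{t_i}$. This last system is precisely the equilibrium condition for the zeros $t_1,\ldots,t_N$ of $L_N^{(\nu-1)}$: from the Laguerre ODE $x L''+(\nu-x)L'+NL=0$, evaluated at a zero $t_i$ (which is simple for $\nu>0$), one gets $\frac{L''(t_i)}{L'(t_i)}=\frac{t_i-\nu}{t_i}$, and the standard identity $\frac{L''(t_i)}{L'(t_i)}=2\sum_{j\ne i}\frac{1}{t_i-t_j}$ for polynomials with simple roots completes the match. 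Hence the ordered zeros $z_i^{(\nu-1)}$ satisfy $u_i=2z_i^{(\nu-1)}$, i.e.\ they provide a solution of $(2)$ lying in the interior of $C_N^B$ (Laguerre zeros being positive and distinct); by the uniqueness from $(1)\Leftrightarrow(2)$ this is the only solution, and comparing orderings yields $2(z_1^{(\nu-1)},\ldots,z_N^{(\nu-1)})=(y_1^2,\ldots,y_N^2)$.

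I do not expect a genuine obstacle — the lemma is in essence a compilation of known facts — but the points requiring care are: using the substitution $x\mapsto(x_1^2,\ldots,x_N^2)$ only on the interior of $C_N^B$, where it is a diffeomorphism onto a convex set; checking that $W\to-\infty$ on the entire boundary, in particular on the face $x_N=0$, which is exactly where the hypothesis $\nu>0$ is needed; and invoking simplicity of the zeros of $L_N^{(\nu-1)}$, which is used both for the identity $L''(t_i)/L'(t_i)=2\sum_{j\ne i}1/(t_i-t_j)$ and so that $2(z_1^{(\nu-1)},\ldots,z_N^{(\nu-1)})$ is a well-defined point of the interior of $C_N^B$. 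With these in hand, the three conditions are seen to describe the same point.
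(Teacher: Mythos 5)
Your argument is correct. Note that the paper itself gives no proof of this lemma: it is recapitulated from the references (\cite{AKM2} and Section 6.7 of Szeg\H{o}'s book), where the statement is exactly Stieltjes' electrostatic characterization of Laguerre zeros. Your write-up fills in that classical argument, and the individual steps check out: the gradient computation and the partial fraction identity give the equivalence of the critical-point equations with (2); the substitution $u_i=x_i^2$ is a diffeomorphism of the interior of $C_N^B$ onto the convex cone $\{u_1>\dots>u_N>0\}$, on which $W$ is strictly concave (here $\nu>0$ is used both for strict concavity of $\nu\sum_i\ln u_i$ and for $W\to-\infty$ at the face $x_N=0$), so the unique critical point is the unique maximizer; and the Laguerre ODE $xL''+(\nu-x)L'+NL=0$ together with $L''(t_i)/L'(t_i)=2\sum_{j\ne i}(t_i-t_j)^{-1}$ at the (simple) zeros shows that $t_i=z_i^{(\nu-1)}$ solves the rescaled system, which by uniqueness forces $y_i^2=2z_i^{(\nu-1)}$. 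The only stylistic difference from the classical source is how uniqueness is obtained: Szeg\H{o} shows that any critical configuration makes the monic polynomial with those roots satisfy the Laguerre differential equation and hence coincide with $L_N^{(\nu-1)}$, whereas you get uniqueness from strict concavity in the $u$-variables and only verify that the Laguerre zeros are a solution; your route is slightly more self-contained on the uniqueness side, while Szeg\H{o}'s identifies the maximizer without any convexity considerations. Either way the proof is complete.
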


\begin{remark}\label{remark-b-nu0}
Using the known explicit representation
\[L_N^{(\alpha)}(x):=\sum_{k=0}^N { N+\alpha\choose N-k}\frac{(-x)^k}{k!}\]
of the Laguerre polynomials according to (5.1.6) of \cite{S}, we can form the polynomial  $L_N^{(-1)}$ of order $N\ge1$
where,
 by (5.2.1) of \cite{S}, 
\begin{equation}\label{laguerre-1}
L_N^{(-1)}(x)=-\frac{x}{N}L_{N-1}^{(1)}(x).
\end{equation}
Continuity arguments thus show that the equivalence of (2) and (3) in
 Lemma \ref{char-zero-B1} remains valid also for $\nu=0$ and $N\ge1$ by using the
$N$ different zeros $z_1>\ldots>z_N=0$ of $L_N^{(-1)}$.

Notice that Lemma \ref{deterministic-boundary-B1} and thus the following results cannot be applied directly to the case $\nu=0$.
On the other hand, the root system $B_N$ for $\nu=0$ is closely related to the root sytem $D_N$, and thus results  for $\nu=0$
can be derived via the corresponding  results for $D_N$; see Section \ref{D-N} below.
\end{remark}

Parts (2) and (3) of Lemma \ref{char-zero-B1}  lead to the following explicit solution of the differential equation of
 Lemma \ref{deterministic-boundary-B1}:

\begin{corollary}\label{special-solution-B1}
Let $\nu>0$ and $y\in C_N^B$ the vector in Eq.~(\ref{y-max-B1}). Then for
 each $c>0$, a solution of the dynamical system in  Lemma \ref{deterministic-boundary-B1}
is given by $\phi(t,c\cdot y)= \sqrt{t+c^2}\cdot y $.
\end{corollary}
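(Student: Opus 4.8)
The plan is to verify directly that the proposed curve $t\mapsto \phi(t,c\cdot y)=\sqrt{t+c^2}\cdot y$ solves the autonomous ODE $\frac{dx}{dt}=H(x)$ with the correct initial condition, using the characterization of $y$ from Lemma \ref{char-zero-B1}(2). First I would record the obvious facts: at $t=0$ the curve takes the value $c\cdot y$, as required, and since $y$ lies in the interior of $C_N^B$ (its squared entries being $2z_i^{(\nu-1)}$, which are distinct and positive), the scaled vector $\sqrt{t+c^2}\cdot y$ stays in the interior of $C_N^B$ for all $t\ge0$, so it lies in some $U_\epsilon$ and $H$ is defined along it.

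Next I would compute both sides. Write $\lambda(t):=\sqrt{t+c^2}$, so $x(t)=\lambda(t)\,y$ and $\dot\lambda(t)=\frac{1}{2\lambda(t)}$. The left-hand side is $\dot x(t)=\dot\lambda(t)\,y=\frac{1}{2\lambda(t)}\,y$, i.e. the $i$-th component is $\frac{y_i}{2\lambda(t)}$. For the right-hand side, I use that $H$ is homogeneous of degree $-1$: each summand $\frac{1}{x_i-x_j}$, $\frac{1}{x_i+x_j}$, $\frac{\nu}{x_i}$ scales like $\lambda^{-1}$. Hence
\[
H_i(\lambda y)=\frac{1}{\lambda}\left(\sum_{j\ne i}\Bigl(\frac{1}{y_i-y_j}+\frac{1}{y_i+y_j}\Bigr)+\frac{\nu}{y_i}\right).
\]
By Lemma \ref{char-zero-B1}(2), the bracketed quantity equals $\frac{1}{2}y_i$, so $H_i(\lambda y)=\frac{y_i}{2\lambda}$, which matches $\dot x_i(t)$ exactly. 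This establishes that $\phi(t,c\cdot y)=\sqrt{t+c^2}\cdot y$ is a solution, and by the uniqueness guaranteed by Lipschitz continuity in Lemma \ref{deterministic-boundary-B1} it is \emph{the} solution with that starting point.

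There is no serious obstacle here; the only point requiring a word of care is the homogeneity bookkeeping — making sure every term of $H$ is homogeneous of degree $-1$ so that the single factor $1/\lambda$ factors out cleanly — and the observation that $y$ has strictly positive, strictly decreasing entries so that the curve never leaves the domain where $H$ is defined. One may also note, exactly as in the $A_{N-1}$ case, that the same formula holds informally for $c=0$, recovering the freezing limit law of \cite{AKM2} started at the origin, with the squared limit position $\sqrt{2t}\cdot y$ having entries $4t\,z_i^{(\nu-1)}$ governed by the zeros of $L_N^{(\nu-1)}$.
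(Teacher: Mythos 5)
Your verification is correct and is essentially the paper's own (implicit) argument: the corollary is stated as a direct consequence of Lemma \ref{char-zero-B1}(2), i.e.\ one checks that $H$ is homogeneous of degree $-1$ and that the stationary equation $\sum_{j\ne i}\bigl(\tfrac{1}{y_i-y_j}+\tfrac{1}{y_i+y_j}\bigr)+\tfrac{\nu}{y_i}=\tfrac12 y_i$ makes $\sqrt{t+c^2}\cdot y$ solve $\dot x=H(x)$, exactly as you do. Only a tiny slip in your closing aside: for $c=0$ the formula gives $\phi(t,0)=\sqrt{t}\cdot y$, whose squared entries are $2t\,z_i^{(\nu-1)}$, not $\sqrt{2t}\cdot y$ with entries $4t\,z_i^{(\nu-1)}$.
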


Notice that on an informal level, Corollary \ref{special-solution-B1}  holds also for $c=0$.

We now turn to the first main result of this section, a locally uniform strong LL which is analog to Theorem \ref{SLLN-A}:

\begin{theorem}\label{SLLN-B1}
 Let $\nu>0$. Let $x$ be a point in the interior of $C_N^B$, and let $y\in \mathbb R^N$. 
Let $\beta_0\ge 1/2$ with $\sqrt \beta \cdot x+y$ in the interior of $C_N^B$ for $\beta\ge \beta_0$.

For  $\beta\ge \beta_0$, consider the Bessel processes $(X_{t,k})_{t\ge0}$ of type B
  with $k=(k_1,k_2)=(\beta\cdot\nu ,\beta)$, which
start in $\sqrt \beta\cdot x+y$.
Then, for all $t>0$,
\[\sup_{0\le s\le t, \beta\ge\beta_0}\|X_{s,k}- \sqrt\beta \phi(s,x) \|<\infty \quad\quad \text{a.s..}\]
In particular,
\[X_{t,(\nu\cdot \beta,\beta)}/\sqrt \beta\to \phi(t,x) \quad\quad\text{for}\quad\quad \beta\to\infty\]
locally uniformly in $t$ a.s.~and thus locally uniformly in $t$ in probability.
\end{theorem}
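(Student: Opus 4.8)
The plan is to mimic the proof of Theorem~\ref{SLLN-A} almost verbatim, replacing the drift function $H$ of Lemma~\ref{deterministic-boundary-A} by the drift function of Lemma~\ref{deterministic-boundary-B1} and replacing the renormalization $X_{t,k}/\sqrt k$ by $X_{t,k}/\sqrt\beta$. Concretely, I would first write the renormalized process $\tilde X_{t,k}:=X_{t,k}/\sqrt\beta$ in integral form,
\[\tilde X_{t,k}^i =\frac{1}{\sqrt\beta}(y_i+B_t^i)+x_i+\int_0^t\Bigl(\sum_{j\ne i}\bigl(\tfrac{1}{\tilde X_{s,k}^i-\tilde X_{s,k}^j}+\tfrac{1}{\tilde X_{s,k}^i+\tilde X_{s,k}^j}\bigr)+\tfrac{\nu}{\tilde X_{s,k}^i}\Bigr)ds,\]
and set $Y_t:=\phi(t,x)$, the solution of the deterministic system $\dot Y=H(Y)$ with $Y_0=x$ from Lemma~\ref{deterministic-boundary-B1}.

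Next I would fix $t>0$ and, given $x,y,\beta_0$, pick $\epsilon>0$ so small that $x+y/\sqrt\beta\in U_\epsilon$ for all $\beta\ge\beta_0$, where $U_\epsilon$ is the set from Lemma~\ref{deterministic-boundary-B1}; by that lemma the deterministic path $Y_s$ also stays in $U_\epsilon$ for all $s\ge0$, and $H$ is Lipschitz on $U_\epsilon$ with some constant $L_\epsilon$. Introduce the stopping time $T_{\epsilon,k}:=\inf\{t>0:\tilde X_{t,k}\notin U_\epsilon\}$ and run Picard iterations $\tilde X_{t,k,m}$ and $Y_{t,m}$ starting from the constant path $x$, exactly as in the $A_{N-1}$ proof. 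The Lipschitz estimate on $U_\epsilon$ gives, for the stopped maximal differences $D_{t,k,m,\epsilon}:=\sup_{s\in[0,t\wedge T_{\epsilon,k}]}\|\tilde X_{s,k,m}-Y_{s,m}\|$, the recursion $D_{t,k,m+1,\epsilon}\le\frac{1}{\sqrt\beta}(\|y\|+\sup_{s\le t}\|B_s\|)+L_\epsilon\sup_{s\le t}\int_0^s D_{u,k,m,\epsilon}\,du$, hence by induction $D_{t,k,m,\epsilon}\le\frac{1}{\sqrt\beta}(\|y\|+\sup_{s\le t}\|B_s\|)e^{L_\epsilon t}$ uniformly in $m$. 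Since the Picard iterates converge (along a subsequence, a.s.) to $\tilde X_{s,k}$ and to $Y_s$ on $[0,t\wedge T_{\epsilon,k}]$ by the classical SDE theory (Protter, Section~V.3), this yields
\[\sup_{s\in[0,t\wedge T_{\epsilon,k}]}\|\tilde X_{s,k}-Y_s\|\le\frac{1}{\sqrt\beta}\,C_t\,e^{L_\epsilon t}\qquad\text{a.s.},\]
with $C_t:=\|y\|+\sup_{s\le t}\|B_s\|<\infty$ a.s.

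Finally I would run the same bootstrap over the events $\Omega_M=\{C_t\le M\}$ (with $P(\Omega_M)\to1$): enlarging $\beta_0=\beta_0(x,y,t,M,\epsilon)$ so that $\frac{1}{\sqrt{\beta_0}}Me^{L_\epsilon t}$ is less than half the "boundary slack" $d(x,\partial C_N^B)$-type quantity controlling membership in $U_\epsilon$, one deduces that on $\Omega_M$ the renormalized path stays strictly inside $U_\epsilon$, so by path-continuity $T_{\epsilon,k}=\infty$, and the bound above holds with the supremum over all $s\in[0,t]$. Letting $M\to\infty$ gives $\sup_{0\le s\le t,\beta\ge\beta_0}\|X_{s,k}-\sqrt\beta\,\phi(s,x)\|<\infty$ a.s., and letting $\beta_0\to\infty$ gives the locally uniform a.s.\ (hence in probability) convergence $X_{t,(\nu\beta,\beta)}/\sqrt\beta\to\phi(t,x)$. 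The one genuinely new point compared with the $A_{N-1}$ argument is that the set $U_\epsilon$ now also constrains the last coordinate $x_N$ away from $0$ (through $x_N>\epsilon\nu/(N-1)$), so I must check that "staying in $U_\epsilon$" is again an open condition stable under small perturbations and that Lemma~\ref{deterministic-boundary-B1} indeed guarantees $Y_s\in U_\epsilon$; this is exactly what that lemma provides, so the main obstacle is essentially bookkeeping rather than a new idea. Since the argument is a line-by-line transcription, I would simply state: \emph{the proof is identical to that of Theorem~\ref{SLLN-A}, with Lemma~\ref{deterministic-boundary-A} replaced by Lemma~\ref{deterministic-boundary-B1} and $\sqrt k$ by $\sqrt\beta$.}
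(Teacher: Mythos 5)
Your proposal is correct and follows essentially the same route as the paper, which likewise reduces Theorem~\ref{SLLN-B1} to a line-by-line repetition of the proof of Theorem~\ref{SLLN-A} with Lemma~\ref{deterministic-boundary-A} replaced by Lemma~\ref{deterministic-boundary-B1} and $\sqrt k$ by $\sqrt\beta$. The only detail the paper makes slightly more explicit is the point you flag at the end: it introduces a modified distance to $\partial C_N^B$ (built from $x_1-x_2,\ldots,x_{N-1}-x_N$ and $(N-1)x_N/\nu$, matching the definition of $U_\epsilon$) so that the bootstrap argument keeping $\tilde X_{s,k}$ inside $U_\epsilon$ carries over verbatim, which is exactly the bookkeeping you describe.
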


\begin{proof} 
The proof is analog to that of  Theorem \ref{SLLN-A}; we only sketch the most important steps.
Recall that  $(\tilde X_{t,k}:=X_{t,k}/\sqrt \beta)_{t\ge0}$ satisfies
\[\tilde X_{t,k}^i =\frac{1}{\sqrt \beta}(y_i+B_t^i) +x_i + \int_0^t\Bigl(
\sum_{j\ne i} 
 \Bigl(\frac{1}{\tilde X_{s,k}^i-\tilde X_{s,k}^j}+\frac{1}{\tilde X_{s,k}^i+\tilde X_{s,k}^j}\Bigr) +
 \frac{\nu}{\tilde X_{s,k}^i}\Bigr) ds\]
for $i=1,\ldots,N$. 
We compare $\tilde X_{t,k}$ with the solution $Y_t=\phi(t,x)$ of
\[Y_t^i =x_i + \int_0^t\Bigl(\sum_{j\ne i}\Bigl( \frac{1}{Y_{s}^i-Y_{s}^j}+\frac{1}{Y_{s}^i+Y_{s}^j}\Bigr) +
\frac{\nu}{ Y_{s}^i}\Bigr) ds\]
for $i=1,\ldots,N$
of Lemma \ref{deterministic-boundary-B1}.

For both equations we perform  Picard iterations as in the proof of   Theorem \ref{SLLN-A}.
If we use  Lemma \ref{deterministic-boundary-B1} instead of \ref{deterministic-boundary-A}, we obtain 
that for each $t>0$, a suitable $\epsilon>0$ with  $x+y/\sqrt \beta\in U_\epsilon$ for $\beta\ge \beta_0$ , and the stopping times 
\[T_{\epsilon,k}:=\inf\{t>0:\> \tilde  X_{t,k}\not\in U_\epsilon\},\] we have
\begin{equation}\label{est-limit-B}
\sup_{s\in [0,t\wedge T_{\epsilon,k}]}\|\tilde X_{s,k}-Y_{s}\|\le \frac{1}{\sqrt{\beta}}\cdot C_t\cdot  e^{L_\epsilon t}
\end{equation}
with suitable Lipschitz constants on $U_\epsilon$ and the almost-surely finite random variable
 $C_t:=\|y\|+ \sup_{s\in [0,t]}\|B_s\| $.

We now use the modified distance 
\[ d(x,\partial C_N^B):=\max\Bigl\{\frac{(N-1) x_N}{\nu}, \> x_1-x_{2}, \ldots, \> x_{N-1}-x_{N}\Bigr\}\]
of $x\in U_\epsilon$ from $\partial C_N^B$ (which fits to the definition of $U_\epsilon$ in  Lemma \ref{deterministic-boundary-B1})
instead of the usual distance in the proof of   Theorem \ref{SLLN-A}.
Using (\ref{est-limit-B}) we then complete the proof precisely  as in the $A_{N-1}$ case.
\end{proof}

We now turn to the second freezing regime with   $k_1\to\infty$ and $k_2>0$ fixed. 
We study the normalized processes $(\tilde X_{t,k}:=X_{t,k}/\sqrt {k_1})_{t\ge0}$ with
\begin{equation}\label{SDE-B2-normalized}
d\tilde X_{t,k}^i =\frac{1}{\sqrt{k_1}}dB_t^i + \frac{k_2}{k_1}\sum_{j\ne i} 
\Bigl( \frac{1}{\tilde X_{t,k}^i-\tilde X_{t,k}^j} +   \frac{1}{\tilde X_{t,k}^i+\tilde X_{t,k}^j}\Bigr)dt
+ \frac{1}{\tilde X_{t,k}^i} dt \end{equation}
for $i=1,\ldots,N$.
We again compare   $\tilde X_{t,k}$ with the solutions of a deterministic dynamical system
which is much easier than in the previous cases.

\begin{lemma}\label{deterministic-boundary-B2}
Let $k_2>0$. For $\epsilon>0$ consider the open sets
 $U_\epsilon:=\{x\in C_N^B:\> d(x,\partial C_N^B)>\epsilon\}$. Then the function 
\[H:U_\epsilon\to \mathbb R^N, \quad x\mapsto (1/x_1,\ldots, 1/x_N)\]
is Lipschitz continuous on $U_\epsilon$ with  constant $\epsilon^{-2}$. Moreover,
 for each starting point $x_0\in U_\epsilon$, the solution $\phi(t,x_0)$
of the dynamical system $\frac{dx}{dt}(t) =H(x(t))$ is given by
 \[\phi(t,x_0)=\Bigl(\sqrt{2t+x_{0,1}^2}, \ldots,\sqrt{2t+x_{0,N}^2}\Bigr) \] with  $\phi(t,x_0)$ in the
interior of  $C_N^B$ for $t\ge0$.
\end{lemma}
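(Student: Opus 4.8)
The statement splits into three independent pieces — the Lipschitz bound, the explicit formula for the flow, and the fact that the flow stays in the interior — and I would treat them in that order. The one preliminary observation that makes everything work is a coordinatewise lower bound on $U_\epsilon$: since the hyperplane $\{x\in\mathbb R^N:\ x_N=0\}$ is contained in $\partial C_N^B$, the Euclidean distance from $x$ to $\partial C_N^B$ is at most $x_N$, so $x\in U_\epsilon$ forces $x_N>\epsilon$, and then $x_i\ge x_N>\epsilon$ for every $i$ by the ordering $x_1\ge\cdots\ge x_N$. With this in hand the Lipschitz estimate is immediate: from the identity $\tfrac1{x_i}-\tfrac1{\tilde x_i}=\tfrac{\tilde x_i-x_i}{x_i\tilde x_i}$ and $x_i\tilde x_i>\epsilon^2$ one gets $|\tfrac1{x_i}-\tfrac1{\tilde x_i}|\le\epsilon^{-2}|x_i-\tilde x_i|$ in each coordinate, and summing squares yields $\|H(x)-H(\tilde x)\|\le\epsilon^{-2}\|x-\tilde x\|$ for $x,\tilde x\in U_\epsilon$.

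For the explicit solution I would exploit that $H$ acts coordinatewise, so $\frac{dx}{dt}=H(x)$ decouples into $N$ copies of the scalar ODE $\dot u=1/u$ with $u(0)=x_{0,i}>0$. Integrating via $\frac{d}{dt}u^2=2u\dot u=2$ gives $u(t)^2=x_{0,i}^2+2t$, hence $u(t)=\sqrt{x_{0,i}^2+2t}$, the positive branch being the relevant one and defined for all $t\ge0$ because $x_{0,i}>0$. Thus $\phi(t,x_0)=\bigl(\sqrt{x_{0,1}^2+2t},\ldots,\sqrt{x_{0,N}^2+2t}\bigr)$ solves the system with the correct initial value; uniqueness follows since any solution staying in a common $U_{\epsilon'}$ is governed by the Lipschitz field from the first part, so Picard–Lindel\"of applies locally along the trajectory. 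Finally, if $x_0$ is in the interior of $C_N^B$, i.e.\ $x_{0,1}>x_{0,2}>\cdots>x_{0,N}>0$, then monotonicity of $s\mapsto\sqrt{s+2t}$ gives $\sqrt{x_{0,1}^2+2t}>\cdots>\sqrt{x_{0,N}^2+2t}>0$, so $\phi(t,x_0)$ remains in the interior of $C_N^B$ for every $t\ge0$.

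I do not expect any genuine obstacle here — this is the easiest of the three deterministic lemmas, since the vector field is decoupled and integrable in closed form. The only points needing a line of care are recording the lower bound $x_i>\epsilon$ on $U_\epsilon$ (which is what feeds the Lipschitz constant $\epsilon^{-2}$) and noting that, although $H$ fails to be globally Lipschitz on $C_N^B$, the explicit candidate can simply be verified to solve the IVP while uniqueness is guaranteed on any $U_{\epsilon'}$ containing the orbit.
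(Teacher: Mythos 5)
Your proposal is correct and is exactly the straightforward verification the paper has in mind: the paper gives no proof of this lemma (it only remarks that the proof is straightforward), and your three steps — the lower bound $x_i\ge x_N>\epsilon$ on $U_\epsilon$ feeding the Lipschitz constant $\epsilon^{-2}$, the decoupled scalar ODE $\dot u=1/u$ integrated via $u^2$, and monotonicity of $s\mapsto\sqrt{s+2t}$ preserving the strict ordering and positivity — are the intended argument. One cosmetic point: the full hyperplane $\{x_N=0\}$ is not contained in $\partial C_N^B$, but the projected point $(x_1,\ldots,x_{N-1},0)$ does lie in $\partial C_N^B$, which is all you need for $d(x,\partial C_N^B)\le x_N$.
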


The proof of this lemma is straightforward. Notice that on an informal level, the dynamical system of Lemma \ref{deterministic-boundary-B2}
has the solution $\phi(t,x_0)$  for all starting points $x_0\in C_N^B$. We now turn to the strong limiting law.

\begin{theorem}\label{SLLN-B2}
 Let $k_2>0$. Let $x$ be a point in the interior of $C_N^B$, and let $y\in \mathbb R^N$. 
Let $k_0\ge 1/2$ large enough such that $\sqrt{k_1} \cdot x+y$ is in the interior of $C_N^B$ for $k_1\ge k_0$.

For  $k_1\ge k_0$, consider the Bessel processes $(X_{t,k})_{t\ge0}$ of type $B_N$  with $k=(k_1,k_2)$, which
start in $\sqrt{k_1}\cdot x+y$.
Then, for all $t>0$,
\[\sup_{0\le s\le t, k_1\ge k_0}\|X_{t,k}- \sqrt{k_1} \phi(t,x) \|<\infty  \quad\quad a.s..\]
In particular,
\[X_{t,(k_1,k_2)}/\sqrt{k_1} \to \phi(t,x) \quad\quad\text{for}\quad\quad k_1\to\infty\]
locally uniformly in $t$ a.s. and thus locally uniformly in $t$ in probability.
\end{theorem}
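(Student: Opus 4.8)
The plan is to mimic the proof of Theorem \ref{SLLN-A} essentially verbatim, since the structure of the argument is identical and, in fact, simpler here because the limiting vector field $H(x)=(1/x_1,\ldots,1/x_N)$ is completely explicit and decoupled. First I would recall that the renormalized processes $(\tilde X_{t,k}:=X_{t,k}/\sqrt{k_1})_{t\ge0}$ satisfy the integrated form of (\ref{SDE-B2-normalized}),
\[\tilde X_{t,k}^i=\frac{1}{\sqrt{k_1}}(y_i+B_t^i)+x_i+\int_0^t\Bigl(\frac{k_2}{k_1}\sum_{j\ne i}\Bigl(\frac{1}{\tilde X_{s,k}^i-\tilde X_{s,k}^j}+\frac{1}{\tilde X_{s,k}^i+\tilde X_{s,k}^j}\Bigr)+\frac{1}{\tilde X_{s,k}^i}\Bigr)\,ds,\]
and compare this with the deterministic solution $Y_t=\phi(t,x)$ from Lemma \ref{deterministic-boundary-B2}, which solves $Y_t^i=x_i+\int_0^t (1/Y_s^i)\,ds$. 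The new feature, relative to the $A_{N-1}$ proof, is the presence of the extra drift term $\frac{k_2}{k_1}\sum_{j\ne i}(\cdots)$ which does not appear in the limiting system but vanishes as $k_1\to\infty$; this contributes an additional $O(k_2/k_1)$ error that must be tracked through the Picard iteration but is harmless since $k_2$ is fixed.

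Next I would set up the same Picard-iteration scheme. Fix $t>0$, pick $\epsilon>0$ with $x+y/\sqrt{k_1}\in U_\epsilon$ for $k_1\ge k_0$, and introduce the stopping times $T_{\epsilon,k}:=\inf\{s>0:\tilde X_{s,k}\notin U_\epsilon\}$. On $U_\epsilon$ both vector fields — the true drift of $\tilde X_{t,k}$ and the limiting field $H$ — are Lipschitz with a common constant $L_\epsilon$ (using that on $U_\epsilon$ all coordinates are bounded below by $\epsilon$ and the pairwise differences and sums are bounded away from $0$). Writing the iterates $\tilde X_{t,k,m}$ and $Y_{t,m}$ as in the proof of Theorem \ref{SLLN-A}, and setting $D_{t,k,m,\epsilon}:=\sup_{s\in[0,t\wedge T_{\epsilon,k}]}\|\tilde X_{s,k,m}-Y_{s,m}\|$, one obtains a recursion
\[D_{t,k,m+1,\epsilon}\le\Bigl(\frac{\|y\|+\sup_{s\le t}\|B_s\|}{\sqrt{k_1}}+\frac{k_2}{k_1}\cdot c_\epsilon t\Bigr)+L_\epsilon\sup_{s\le t}\int_0^s D_{u,k,m,\epsilon}\,du,\]
where $c_\epsilon$ bounds the omitted $B_N$-interaction term on $U_\epsilon$. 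Induction on $m$ and summing the exponential series yields, in the limit $m\to\infty$ (using the standard SDE approximation result cited from \cite{P} to identify the iterates' limits with $\tilde X$ and $Y$ respectively),
\[\sup_{s\in[0,t\wedge T_{\epsilon,k}]}\|\tilde X_{s,k}-Y_s\|\le\Bigl(\frac{C_t}{\sqrt{k_1}}+\frac{k_2 c_\epsilon t}{k_1}\Bigr)e^{L_\epsilon t}\le\frac{\tilde C_t}{\sqrt{k_1}}e^{L_\epsilon t}\quad\text{a.s.},\]
with $C_t:=\|y\|+\sup_{s\le t}\|B_s\|$ almost surely finite and $\tilde C_t$ absorbing the fixed constant $k_2 c_\epsilon t$.

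Finally I would run the same $\Omega_M$ argument: on $\Omega_M:=\{C_t\le M\}$, enlarging $k_0$ so that $(M/\sqrt{k_0}+k_2c_\epsilon t/k_0)e^{L_\epsilon t}<(d(x,\partial C_N^B)-\epsilon)/2$, one shows (using that $d(Y_s,\partial C_N^B)\ge d(x,\partial C_N^B)$ for all $s\ge0$, which is immediate from the explicit formula $\phi(t,x_0)=(\sqrt{2t+x_{0,1}^2},\ldots,\sqrt{2t+x_{0,N}^2})$ since each coordinate is nondecreasing in $t$ and the gaps $\phi_i-\phi_{i+1}$ are nondecreasing) that $T_{\epsilon,k}(\omega)=\infty$ for $\omega\in\Omega_M$, $k_1\ge k_0$, hence the supremum bound holds on all of $[0,t]$. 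Letting $M\to\infty$ gives the first (a.s.~boundedness) statement, and letting $k_0\to\infty$ gives the locally uniform a.s.~convergence, hence convergence in probability. The only point requiring slight care — and the one I would call the main obstacle, though it is minor — is verifying that $d(Y_s,\partial C_N^B)$ is nondecreasing for the $B_N$-chamber; this is where the explicit solution in Lemma \ref{deterministic-boundary-B2} is used, and one checks directly that $t\mapsto\phi_{N}(t,x_0)$ and $t\mapsto\phi_i(t,x_0)-\phi_{i+1}(t,x_0)$ are nondecreasing, which suffices for the chamber distance $\min\{x_N,x_1-x_2,\ldots,x_{N-1}-x_N\}$.
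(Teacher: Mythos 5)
Your overall strategy is exactly the paper's: Picard iteration comparison of $\tilde X_{t,k}=X_{t,k}/\sqrt{k_1}$ with the explicit deterministic flow of Lemma \ref{deterministic-boundary-B2}, stopping times $T_{\epsilon,k}$, the $\Omega_M$-argument, and the observation that the extra drift $\frac{k_2}{k_1}\sum_{j\ne i}(\cdots)$ is bounded on $U_\epsilon$ and hence contributes only $O(k_2/k_1)$; tracking that term explicitly is in fact slightly more careful than the paper's sketch, which absorbs it silently into the bound (\ref{est-limit-B2}).

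However, the step you yourself single out as the main point is wrong as stated. You claim that $d(Y_s,\partial C_N^B)\ge d(x,\partial C_N^B)$ for all $s\ge0$ because the coordinates $\phi_i(t,x_0)=\sqrt{2t+x_{0,i}^2}$ and the gaps $\phi_i-\phi_{i+1}$ are nondecreasing in $t$. The gaps are in fact strictly \emph{decreasing}: $\sqrt{2t+a}-\sqrt{2t+b}=\frac{a-b}{\sqrt{2t+a}+\sqrt{2t+b}}$ tends to $0$ as $t\to\infty$ whenever $a>b\ge0$. Consequently the chamber distance along the flow is not monotone and actually tends to $0$; e.g.\ for $N=2$, $x=(10,1)$ one has $d(x,\partial C_2^B)=1$ but $d(\phi(t,x),\partial C_2^B)\to0$. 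So the inequality you invoke in the $\Omega_M$-step (the analogue of (\ref{boundary-away-A})) is false for this flow, unlike in the $A_{N-1}$ case where Lemma \ref{deterministic-boundary-A} really gives monotonicity of the minimal gap. The repair is the one the paper uses and is cheap because the theorem concerns a fixed finite horizon $t$: using the explicit formula, for given $t$, $x$, $y$, $k_0$ choose $\epsilon>0$ so small (depending on $t$) that $\phi(s,x+y/\sqrt{k_1})\in U_{\epsilon'}$ for some $\epsilon'>\epsilon$ (say $\epsilon'\ge 3\epsilon$), uniformly for all $s\in[0,t]$ and $k_1\ge k_0$; this is possible since on the compact interval $[0,t]$ the coordinates stay positive and the gaps stay bounded below by a positive constant. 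Then enlarge $k_0$ so that $(M/\sqrt{k_0}+k_2c_\epsilon t/k_0)e^{L_\epsilon t}<\epsilon$, and on $\Omega_M$ conclude by path continuity that $T_{\epsilon,k}>t$, after which your estimate on $[0,t\wedge T_{\epsilon,k}]$ becomes an estimate on $[0,t]$. With this replacement of the monotonicity claim by the compactness/explicit-formula argument, the rest of your proof goes through and coincides with the paper's.
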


\begin{proof} 
The proof is analog to that of  Theorems \ref{SLLN-A} and \ref{SLLN-B1}; we only sketch the main steps and 
describe the  differences.
Notice that  $(\tilde X_{t,k}:=X_{t,k}/\sqrt{k_1})_{t\ge0}$ satisfies
\[\tilde X_{t,k}^i =\frac{1}{\sqrt{k_1}}(y_i+B_t^i) +\frac{k_2}{k_1} \int_0^t
\sum_{j\ne i} 
 \Bigl(\frac{1}{\tilde X_{s,k}^i-\tilde X_{s,k}^j}+\frac{1}{\tilde X_{s,k}^i+\tilde X_{s,k}^j}
\Bigr)ds 
+x_i+  \int_0^t\frac{1}{\tilde X_{s,k}^i}ds\]
for $i=1,\ldots,N$. 
We compare $\tilde X_{t,k}$ with the solution $Y_t=\phi(t,x)$ of
\[Y_t^i =x_i + \int_0^t\frac{1}{ Y_{s}^i}ds \quad\quad(i=1,\ldots,N)\]
of Lemma \ref{deterministic-boundary-B2}.

For both equations we perform  Picard iterations as in the proof of   Theorem \ref{SLLN-A}.
We notice that for any given time $t\ge0$ and given $k_0$ as above, we can find a small $\epsilon>0$ such that
the deterministic solution $\phi(s, x+y/\sqrt{k_1})$ of Lemma  \ref{deterministic-boundary-B2} 
is contained in $U_\epsilon$ for all $k_1\ge k_0$ and all $s\in[0,t]$.
If we consider the stopping times 
\[T_{\epsilon,k}:=\inf\{t>0:\> \tilde  X_{t,k}\not\in U_\epsilon\}\] we obtain as in the proof of Theorem \ref{SLLN-A} that
\begin{equation}\label{est-limit-B2}
\sup_{s\in [0,t\wedge T_{\epsilon,k}]}\|\tilde X_{s,k}-Y_{s}\|\le \frac{1}{\sqrt{k_1}}\cdot C_t\cdot  e^{t/\epsilon^2}
\end{equation}
with  the a.s. finite random variable
 $C_t:=\|y\|+ \sup_{s\in [0,t]}\|B_s\| $.

We complete this proof by following the steps in the proof of  Theorem \ref{SLLN-A}.
\end{proof}

\section{A central limit theorem for the root system B}

In this section we show that the locally uniform limit law in Theorem~\ref{SLLN-B2} above can be used to
derive a central limit theorem. This result generalizes the case $B_1$ for classical one-dimensional 
Bessel processes where this is a classical and well-known result; see Remark \ref{one-dim-case}   below.

\begin{theorem}\label{CLT-B2}
 Let $k_2>0$. Let $x$ be a point in the interior of $C_N^B$, and let $y\in \mathbb R^N$. 
Let $k_0\ge 1/2$ large enough that $\sqrt{k_1} \cdot x+y$ is in the interior of $C_N^B$ for $k_1\ge k_0$.
For  $k_1\ge k_0$, consider the Bessel processes $(X_{t,k})_{t\ge0}$ of type B  with $k=(k_1,k_2)$, which
start in $\sqrt{k_1}\cdot x+y$.
Then, for all $t>0$, 
\[X_{t,(k_1,k_2)}-\sqrt{k_1} \cdot \Bigl(\sqrt{2t+x_{1}^2}, \ldots,\sqrt{2t+x_{N}^2}\Bigr)\]
tends in distribution for  $k_1\to\infty$ to the normal distribution 
\[N\left(0,\textnormal{diag}\left(\frac{t^2+tx_1^2}{2t+x_1^2}, \ldots,\frac{t^2+tx_N^2}{2t+x_N^2} \right)\right).\]
\end{theorem}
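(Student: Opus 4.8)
The plan is to study the fluctuations of $X_{t,k}$ about its deterministic limit directly at the level of the SDE (\ref{SDE-B}). Write $\phi(t,x)=(\sqrt{2t+x_1^2},\dots,\sqrt{2t+x_N^2})$ for the solution of the dynamical system of Lemma \ref{deterministic-boundary-B2} and set $V_{t,k}:=X_{t,k}-\sqrt{k_1}\,\phi(t,x)$; since $\phi(0,x)=x$ one has $V_{0,k}=y$ for every $k_1\ge k_0$. By Theorem \ref{SLLN-B2}, and the estimate (\ref{est-limit-B2}) in its proof with $C_t=\|y\|+\sup_{0\le s\le t}\|B_s\|$, we have $\sup_{0\le s\le t,\,k_1\ge k_0}\|V_{s,k}\|<\infty$ almost surely; moreover, on the events $\Omega_M:=\{C_t\le M\}$, which satisfy $P(\Omega_M)\to 1$, there is for each $M$ a constant $C_M$ with $\sup_{0\le s\le t}\|V_{s,k}\|\le C_M$ once $k_1$ is large enough. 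This uniform confinement — which also keeps $X_{s,k}$ away from the singular sets $\{x_i=x_j\}$, $\{x_i=-x_j\}$ and $\{x_i=0\}$ throughout $[0,t]$, uniformly in $k_1$ — is what everything below rests on.

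Next I would linearise the drift. Using (\ref{SDE-B}), $\tfrac{d}{dt}\phi_i(t)=1/\phi_i(t)$ and $X_{t,k}^i=\sqrt{k_1}\phi_i(t)+V_{t,k}^i$, a first-order Taylor expansion of $u\mapsto(1+u)^{-1}$ at $u=V_{t,k}^i/(\sqrt{k_1}\phi_i(t))$ gives
\[
\frac{k_1}{X_{t,k}^i}-\frac{\sqrt{k_1}}{\phi_i(t)}=\frac{\sqrt{k_1}}{\phi_i(t)}\Bigl(\frac1{1+u}-1\Bigr)=-\frac{V_{t,k}^i}{\phi_i(t)^2}+r_{t,k}^i ,
\]
where on $\Omega_M$ one has $|u|\le C_M/(\sqrt{k_1}\,x_N)\to 0$ (since $\phi_i(s)\ge\phi_i(0)=x_i\ge x_N>0$ on $[0,t]$), hence $\sup_{0\le s\le t}|r_{s,k}^i|=O(1/\sqrt{k_1})$. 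Likewise, on $\Omega_M$ each $X_{t,k}^i\mp X_{t,k}^j$ equals $\sqrt{k_1}(\phi_i(s)\mp\phi_j(s))+O(1)$ with $|\phi_i(s)\mp\phi_j(s)|$ bounded below on $[0,t]$ for $i\ne j$ (this is where $x$ being interior to $C_N^B$ is used); as $k_2$ is fixed, the whole $k_2$-sum in (\ref{SDE-B}) therefore contributes only a term $\rho_{t,k}^i$ with $\sup_{0\le s\le t}|\rho_{s,k}^i|=O(1/\sqrt{k_1})$. Collecting terms,
\[
V_{t,k}^i=y_i+B_t^i-\int_0^t\frac{V_{s,k}^i}{\phi_i(s)^2}\,ds+\int_0^t\bigl(r_{s,k}^i+\rho_{s,k}^i\bigr)\,ds ,
\]
an affine integral equation whose forcing term vanishes uniformly on $[0,t]$ on each $\Omega_M$ as $k_1\to\infty$.

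I would then compare $V_{\cdot,k}$ with the solution $V_t$ of the linear SDE $dV_t^i=dB_t^i-V_t^i\,\phi_i(t)^{-2}\,dt$, $V_0=y$, driven by the same Brownian motion; its coordinates are independent, both the noise and the drift decoupling. Subtracting the two equations makes the terms $y_i+B_t^i$ cancel, and the drift difference is linear, so on $\Omega_M$
\[
\sup_{0\le s\le t}|V_{s,k}^i-V_s^i|\le\frac1{x_N^2}\int_0^t\sup_{0\le u\le s}|V_{u,k}^i-V_u^i|\,ds+t\sup_{0\le s\le t}|r_{s,k}^i+\rho_{s,k}^i| ,
\]
and Gronwall's lemma gives $\sup_{0\le s\le t}|V_{s,k}^i-V_s^i|\le t\,e^{t/x_N^2}\sup_{0\le s\le t}|r_{s,k}^i+\rho_{s,k}^i|\to 0$ on each $\Omega_M$, hence almost surely on the full-measure set $\bigcup_M\Omega_M$. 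Thus $X_{t,k}-\sqrt{k_1}\phi(t,x)=V_{t,k}\to V_t$ almost surely. Finally, solving the limiting SDE by the integrating factor $\mu_i(t)=\exp\bigl(\int_0^t\phi_i(s)^{-2}\,ds\bigr)=\phi_i(t)/x_i$ yields
\[
V_t^i=\frac{x_iy_i}{\phi_i(t)}+\frac1{\phi_i(t)}\int_0^t\phi_i(s)\,dB_s^i ,
\]
a Gaussian variable with variance $\phi_i(t)^{-2}\int_0^t\phi_i(s)^2\,ds=(t^2+tx_i^2)/(2t+x_i^2)$ and mean $x_iy_i/\sqrt{2t+x_i^2}$, which is $0$ when $y=0$; by independence of the coordinates this is exactly the claimed limit $N\bigl(0,\textnormal{diag}((t^2+tx_i^2)/(2t+x_i^2))_{i}\bigr)$ in the case $y=0$, while for general $y$ the same limit results on centring at $\sqrt{k_1}\,\phi(t,x+y/\sqrt{k_1})$ instead of $\sqrt{k_1}\,\phi(t,x)$. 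Convergence in distribution follows from the almost sure convergence $V_{t,k}\to V_t$.

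The delicate step is the uniform-in-$k_1$ control underlying the second paragraph: one must keep $X_{s,k}$ away from every singular hyperplane on $[0,t]$, uniformly in $k_1\ge k_0$, so that the linearisation is valid with remainders genuinely of order $O(1/\sqrt{k_1})$ and the comparison equation is uniformly Lipschitz. This is precisely what the locally uniform strong law of Theorem \ref{SLLN-B2} supplies, via the a priori bound $\|V_{\cdot,k}\|=\sqrt{k_1}\,\|\tilde X_{\cdot,k}-\phi(\cdot,x)\|=O(1)$ on $[0,t]$; in that sense the present central limit theorem is essentially a corollary of the strong law, the remaining work being the routine exhaustion over the $\Omega_M$ (as in the proof of Theorem \ref{SLLN-A}) that turns "$\to 0$ on $\Omega_M$" into convergence in law. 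A secondary point is that $r_{\cdot,k},\rho_{\cdot,k}$ are random, but as the drift difference in the Gronwall estimate is linear in $V_{\cdot,k}-V_\cdot$, a pathwise Gronwall argument suffices and no moment bounds are needed.
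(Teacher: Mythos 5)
Your argument is correct, but it takes a genuinely different route from the paper's. The paper works with the squared process $Z_{t,k}=((X^1_{t,k})^2,\dots,(X^N_{t,k})^2)$: by It{\^o}'s formula the multiplicity $k_1$ enters its drift only additively, the locally uniform law of Theorem~\ref{SLLN-B2} justifies passing to the limit inside the stochastic integral $\int_0^t (X^i_{s,k}/\sqrt{k_1})\,dB^i_s\to\int_0^t\sqrt{2s+x_i^2}\,dB^i_s$, which yields a CLT for $Z_{t,k}$, and this is transferred to $X_{t,k}$ through the factorization $a^2-b^2=(a-b)(a+b)$ together with Slutsky's lemma. You instead linearise the SDE (\ref{SDE-B}) for $X$ itself around $\sqrt{k_1}\,\phi(t,x)$ and compare pathwise, via Gronwall, with the time-inhomogeneous Ornstein--Uhlenbeck equation $dV^i_t=dB^i_t-V^i_t\,\phi_i(t)^{-2}dt$; since the Brownian term cancels exactly in that comparison, you need no convergence theorem for stochastic integrals at all, and you obtain almost sure, locally uniform convergence of the fluctuation process to an explicit Gaussian limit, which is stronger than the fixed-$t$ convergence in distribution asserted in the theorem. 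The uniform confinement you extract from (\ref{est-limit-B2}) (staying in $U_\epsilon$, so all denominators are of order $\sqrt{k_1}$) is exactly the right input to make the remainders $r,\rho$ uniformly $O(1/\sqrt{k_1})$, and your variance $\phi_i(t)^{-2}\int_0^t\phi_i(s)^2\,ds=(t^2+tx_i^2)/(2t+x_i^2)$ agrees with the paper's.

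One point you raise deserves emphasis: your limit has mean $x_iy_i/\sqrt{2t+x_i^2}$ in the $i$-th coordinate, so for $y\neq0$ the statement with centering $\sqrt{k_1}\,\phi(t,x)$ and mean zero is not exact; one must either center at $\sqrt{k_1}\,\phi(t,x+y/\sqrt{k_1})$, equivalently at $\sqrt{k_1}\sqrt{x_i^2+2t+2x_iy_i/\sqrt{k_1}}$, or keep the nonzero mean. The paper's own proof establishes the CLT for $X^i_{t,k}-\sqrt{k_1}\sqrt{x_i^2+2t+2x_iy_i/\sqrt{k_1}}$ and then replaces this centering by $\sqrt{k_1}\sqrt{x_i^2+2t}$, although the difference converges to $x_iy_i/\sqrt{2t+x_i^2}\neq0$. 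Your computation makes this shift explicit; your version of the conclusion is the accurate one for $y\neq0$ and coincides with the theorem as stated when $y=0$.
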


\begin{proof} 
Consider the process $\bigl(Z_{t,k}:= ((X^1_{t,k})^2, \ldots ,(X^N_{t,k})^2)\bigr)_{t\ge0}$. The It{\^o} formula and the SDE
for $(X^i_{t,k})_{t\ge0}$ show for $i=1,\ldots, N$ that
\begin{align}
dZ_{t,k}^i&= 2X^i_{t,k}\> dX^i_{t,k} \> +\> dt \notag\\
&= 2X^i_{t,k}\>dB^i_{t} \> +\> 2k_2\sum_{j\ne i} 
\Bigl( \frac{X^i_{t,k}}{X^i_{t,k}-X^j_{t,k}} +\frac{X^i_{t,k}}{X^i_{t,k}+X^j_{t,k}} \Bigr) dt \> +\>
(2k_1+1)dt.\notag
\end{align}
Hence,
\begin{align}\label{clt-main-rel}
 \frac{1}{\sqrt{k_1}}&\Bigl(Z_{t,k}^i-Z_{0,k}^i-(2k_1+1)t\Bigr)=\notag\\
&=2 \int_0^t \frac{X^i_{s,k}}{\sqrt{k_1}} \> dB^i_{s}\> +\>
\frac{2k_2}{\sqrt{k_1}}\int_0^t \sum_{j\ne i} 
\Bigl( \frac{X^i_{s,k}}{X^i_{s,k}-X^j_{s,k}} +\frac{X^i_{s,k}}{X^i_{s,k}+X^j_{s,k}} \Bigr) ds.
\notag\\
&=2 \int_0^t \frac{X^i_{s,k}}{\sqrt{k_1}} \> dB^i_{s}\> +\>
\frac{4k_2}{\sqrt{k_1}}\int_0^t \sum_{j\ne i} 
 \frac{(X^i_{s,k})^2}{(X^i_{s,k})^2-(X^j_{s,k})^2} ds.
\end{align}
As $X^i_{s,k}/\sqrt{k_1}\to\phi(s,x)$ locally uniformly in
 probability by Theorem \ref{SLLN-B2} with the function $\phi$ from Lemma \ref{deterministic-boundary-B2}, 
we obtain from standard results on stochastic integrals (see e.g. Section II.4 of \cite{P}) that
\[ \int_0^t \frac{X^i_{s,k}}{\sqrt{k_1}} \> dB^i_{s}\longrightarrow \int_0^t  \phi(s,x)\> dB^i_{s}\]
locally uniformly in $t$ in probability. Moreover, by the same argument,
 the integrand of the second integral of the r.h.s.~of
(\ref{clt-main-rel}) converges also to a finite, continuous deterministic function, that is, the second summand
 of the r.h.s.~of
(\ref{clt-main-rel}) converges to $0$ locally uniformly in $t$ in probability. Hence, using the initial condition,
we see that
\[\frac{1}{\sqrt{k_1}}(Z_{t,k}^i-(x_i\sqrt{k_1}+y_i)^2 - (2k_1+1)t) \longrightarrow 2\cdot \int_0^t 
\sqrt{2s+x_i^2}\>  dB^i_{s}\]
 in probability for $k\to\infty$ and $i=1,\ldots,N$. As the limits are $N(0,4t^2+4tx_i^2)$-distributed and independent
for $i=1,\ldots,N$ we conclude that
\begin{equation}\label{limit-normal-2}
\frac{1}{\sqrt{k_1}}\Bigl(Z_{t,k}^1-x_1^2k_1-2\sqrt{k_1}x_1y_1 -2k_1t, \ldots,
Z_{t,k}^N-x_N^2k_1-2\sqrt{k_1}x_Ny_N -2k_1t\Bigr)
\end{equation}
tends in distribution to the $N$-dimensional normal distribution 
\begin{equation}\label{limit-normal-1}
N(0,\textnormal{diag}(4 t^2+4tx_1^2, \ldots,4t^2+4tx_N^2)) .
\end{equation}

In order to obtain a CLT for the original variables $X^i_{t,k}$, we use the definition of $Z_{t,k}$ and observe that
\begin{align}
\frac{1}{\sqrt{k_1}}&\Bigl((X_{t,k}^i)^2- k_1(x_i^2+2t +2x_iy_i/\sqrt{k_1})\Bigr)\notag\\
&= \Bigl(X_{t,k}^i- \sqrt{k_1}\cdot\sqrt{x_i^2+2t +2x_iy_i/\sqrt{k_1}}\Bigr) \notag\\
&\quad\times \frac{1}{\sqrt{k_1}}
 \Bigl(X_{t,k}^i+ \sqrt{k_1}\cdot\sqrt{x_i^2+2t +2x_iy_i/\sqrt{k_1}}\Bigr)\notag
\end{align}
where the second factor in the r.h.s.~tends in probability to $2\sqrt{x_i^2+2t}$ for  $k\to\infty$ and $i=1,\ldots,N$
by  Theorem \ref{SLLN-B2}. This, the CLT for $Z_{t,k}$ in the first part of the proof, and Slutsky's lemma applied to the quotient
\[\frac{(Z_{t,k}^1-x_1^2k_1-2\sqrt{k_1}x_1y_1 -2k_1t)/\sqrt{k_1}}{\Bigl(X_{t,k}^i+ \sqrt{k_1}\cdot\sqrt{x_i^2+2t +2x_iy_i/\sqrt{k_1}}\Bigr)/\sqrt{k_1}}=X_{t,k}^i- \sqrt{k_1}\cdot\sqrt{x_i^2+2t +2x_iy_i/\sqrt{k_1}}\]
yield that
\[\Bigl(X_{t,k}^1-\sqrt{k_1}\sqrt{x_1^2+2t}, \ldots, X_{t,k}^N-\sqrt{k_1}\sqrt{x_N^2+2t}\Bigr)\]
converges in distribution to the  normal distribution given in the statement.
\end{proof}

\begin{figure}[!t]
\[
\begin{array}{cc}
\includegraphics[width=0.45\textwidth]{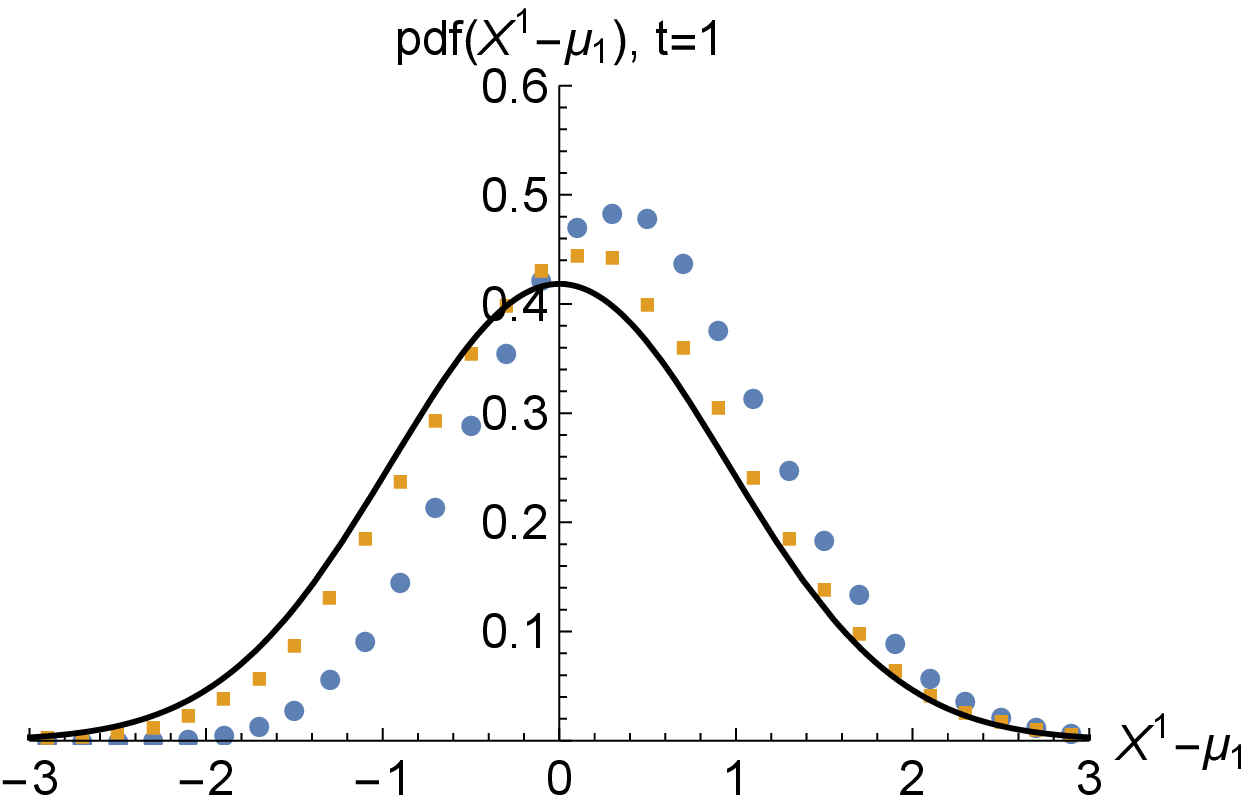}&\includegraphics[width=0.45\textwidth]{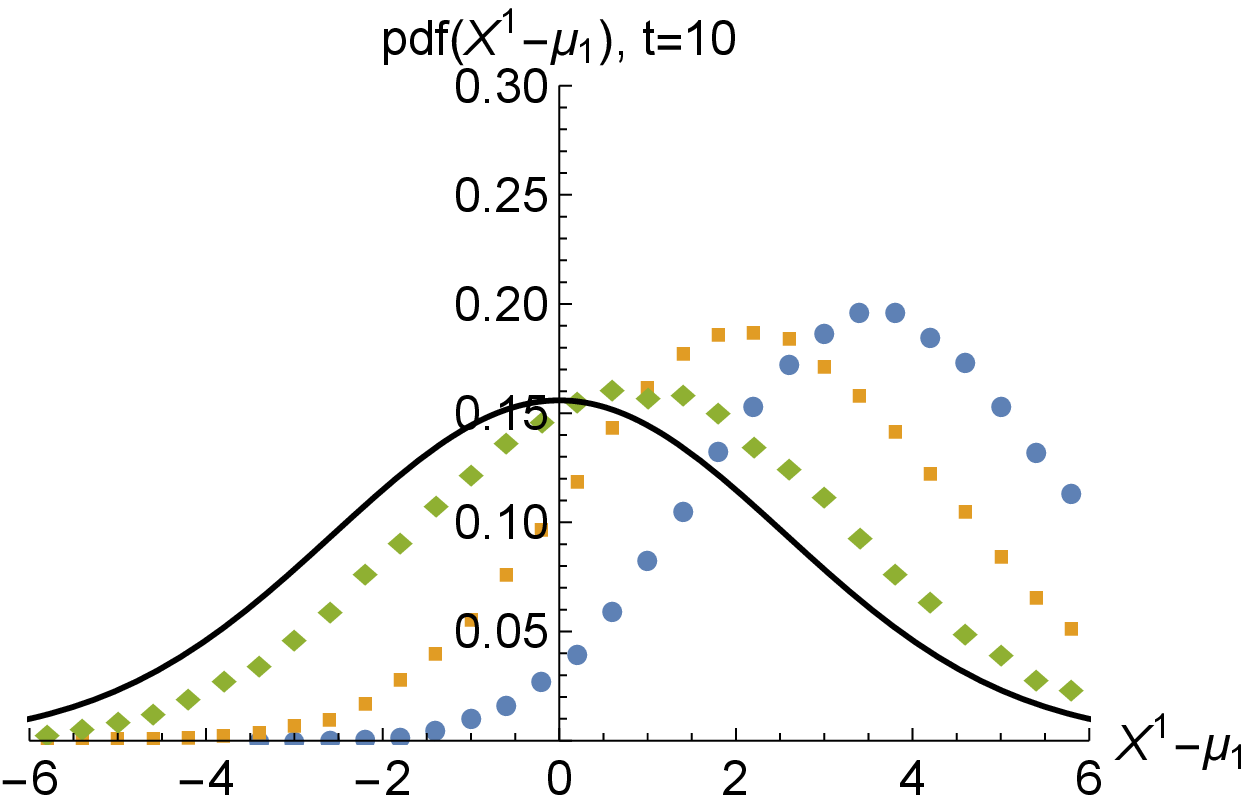}\\
\text{a)}&\text{b)}\\
\includegraphics[width=0.45\textwidth]{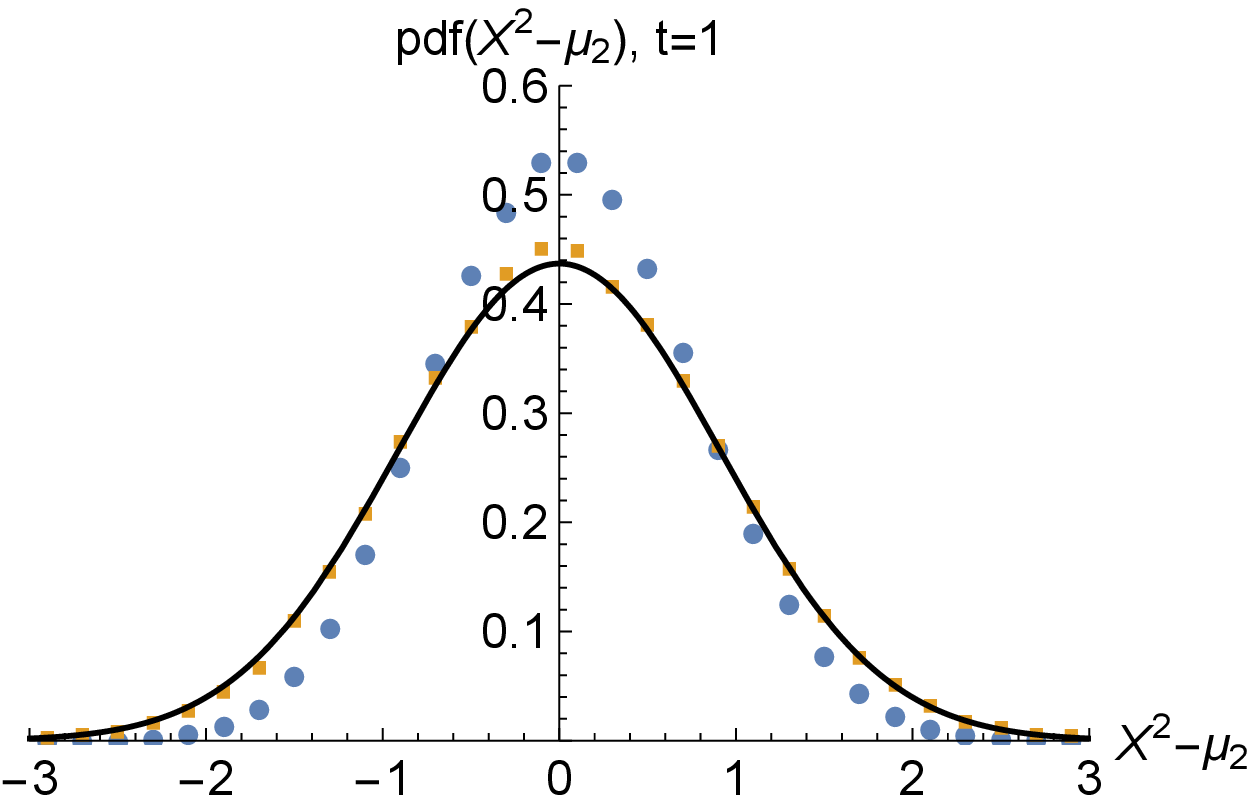}&\includegraphics[width=0.45\textwidth]{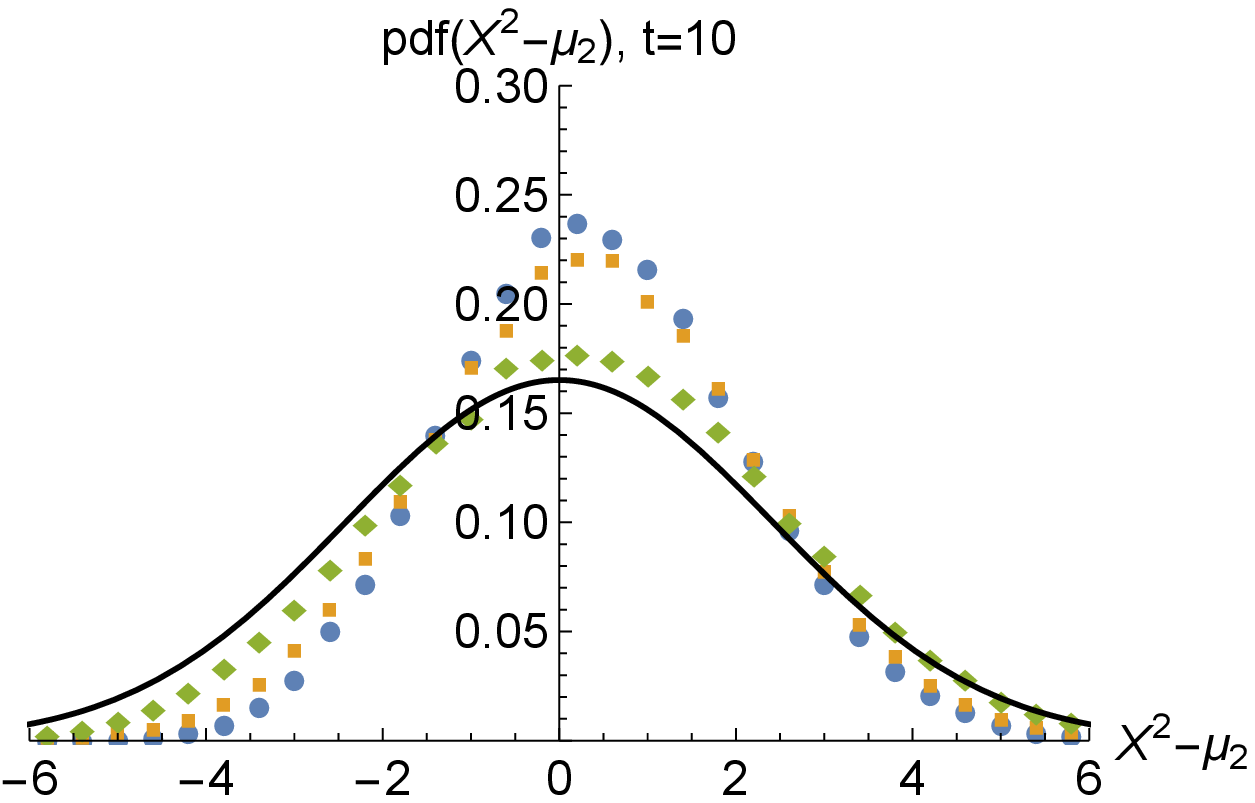}\\
\text{c)}&\text{d)}\\
\includegraphics[width=0.45\textwidth]{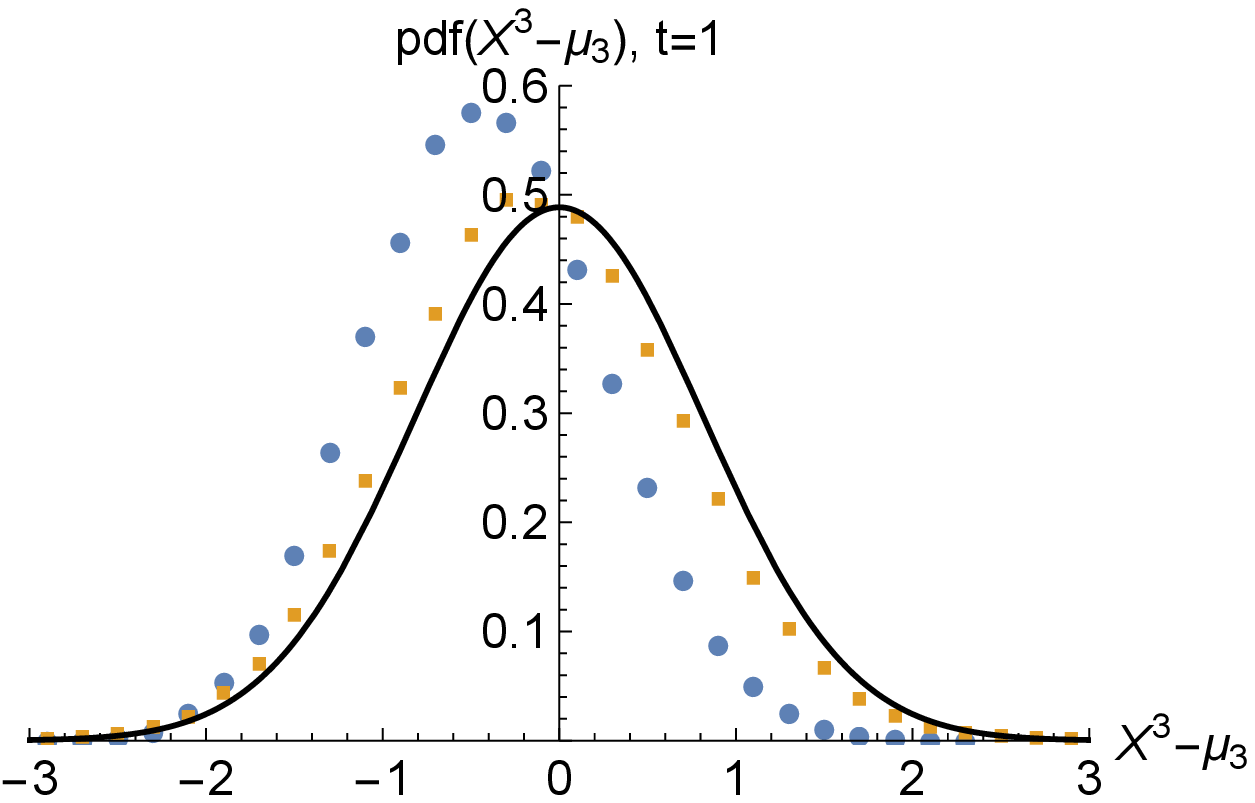}&\includegraphics[width=0.45\textwidth]{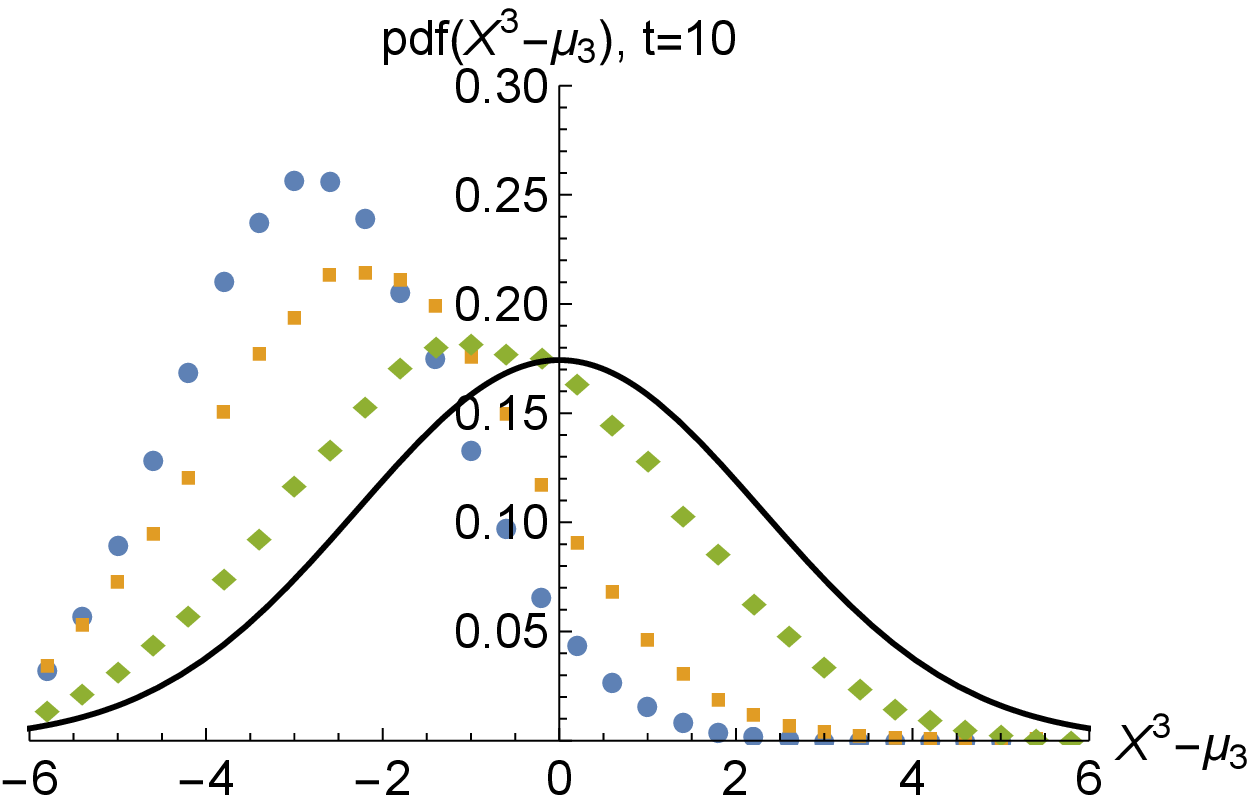}\\
\text{e)}&\text{f)}
\end{array}
\]
\caption{\label{fig-CLT-B2}} Illustration of Theorem~\ref{CLT-B2} for $N=3$ and $k_2=1$ with $(x_1,x_2,x_3)=(3,2,1)$.
The $i$-th row corresponds to $X_{t,k}^i$, while the first column corresponds to $t=1$ and the second column corresponds to $t=10$.
In each plot, the solid lines correspond to the limiting normal distribution, while the blue circles, yellow squares and green rhombi correspond to the distribution of $X_{t,k}^i-\sqrt{k_1(2t-x_i^2)}$ obtained in numerical simulations for $k_1=5$, 50, and 500, respectively.
\end{figure}

We illustrate this theorem for the case $N=3$ in Figure~\ref{fig-CLT-B2}, where the limiting distribution is compared with numerical simulations of the process for several values of $k_1$ and $t$. As expected from the theorem, the (centered) distribution of each particle approaches the limiting normal distribution as $k_1$ grows, but there is a clear bias in the numerical results. By observing the plots corresponding to $t=10$ (plots b), d), and f) in the figure), it is apparent that a larger value of $k_1$ is necessary to approach the limiting distribution at larger times. This means that the bias is not an effect of the starting position of the process, but rather an accumulating effect of the second term in the last line of \eqref{clt-main-rel}, which represents the repulsion between particles. Indeed, the rightmost particle ($X_{t,k}^1$) is pushed to the right and the leftmost particle ($X_{t,k}^3$) is pushed to the left. In the case of $X_{t,k}^2$, the bias is much smaller because the repulsion from $X_{t,k}^1$ and $X_{t,k}^3$ cancel each other partially, leading to a much faster convergence to the limiting distribution.

\begin{remark}\label{one-dim-case}
 We briefly discuss the CLT \ref{CLT-B2} for $N=1$ which is the case where \ref{CLT-B2}  is reduced to a known classical CLT for
classical one-dimensional Bessel processes. To explain this fix  $\tilde x\in]0,\infty[$.
 Consider independent one-dimensional Brownian motions $(B_t^l)_{t\ge0}$ starting in $0$ for $l\in\mathbb N$.
It is well-known (see e.g. Sections VI.3 and XI.1 of \cite{RY}) 
that for $d \in\mathbb N$ the sums of shifted squares 
\[\Bigl( S_{t,d}:=\sum_{l=1}^d (B_t^l+\tilde x)^2\Bigr)_{t\ge0}\]
are squares of classical one-dimensional Bessel processes, i.e., of Bessel processes of type $B_1$ with multiplicity
$k_1=(d-1)/2$ ($k_2$ is irrelevant here).

Now fix $t>0$. Then $S_{t,d}$ is a sum of $d$ iid random variables with mean $t+\tilde x^2$ and variance $2t^2+4t\tilde x^2$.
Therefore, by the classical CLT for sums of iid random variables,
\begin{equation}\label{CLT-1-dim}
\frac{S_{t,d}-d(t+\tilde x^2)}{\sqrt{d}}\to N(0,2t^2+4t\tilde x^2)
\end{equation}
for $d\to \infty$ in distribution.

This CLT corresponds perfectly with the convergence of (\ref{limit-normal-2}) to the distribution 
 (\ref{limit-normal-1}) if one takes into account that we have $k_1=(d-1)/2$ which implies that the  point $x$
 in Theorem \ref{CLT-B2}  is related to $\tilde x$ by $d\tilde x^2=k x^2$, i.e., $\tilde x^2=\frac{d-1}{2d}x^2$.
We notice that this approach to the one-dimensional CLT (\ref{CLT-1-dim}) also works for
 any real parameter $d\in[1,\infty[$ and also for the starting point $\tilde x=0$.
\end{remark}

We also notice that for 
the cases $k_2=1/2,1,2$, the central limit theorem is related to a central limit theorem for 
Wishart distributions on the cones $\Pi_N(\mathbb F)$ of all $N\times N$-dimensional, positive semidefinite 
matrices over the fields $\mathbb F=\mathbb R, \mathbb C$ and the skew-field of quaternions $\mathbb H$ respectively.
We discuss this extension of the preceding remark briefly.      

\begin{remark}
Fix one of the (skew-)fields $\mathbb F=\mathbb R, \mathbb C, \mathbb H$ as before with the real dimension $d=1,2,4$ respectively. For
integers $p\in\mathbb N$ consider the vector space $M_{p,N}(\mathbb F)$ of all $p\times N$-matrices over $\mathbb F$ with
 the real dimension $dpN$.
Choose the standard basis there with $d$ basis vectors in each entry, and consider the $dpN$-dimensional associated Brownian motion
 $(B_t^p)_{t\ge0}$ on  $M_{p,N}(\mathbb F)$. If we write $A^*:=\overline A^T\in M_{N,p}(\mathbb F)$ for matrices $A\in M_{p,N}(\mathbb F)$
 with the usual conjugation on $\mathbb F$, the process $(Z_t^p:= (B_t^p)^*B_t^p)_{t\ge0}$ becomes 
a Wishart process on the cone $\Pi_N(\mathbb F)$
of all $N\times N$ positive semidefinite matrices over  $\mathbb F$  with shape parameter $p$; see \cite{Bru,DDMY} for details on
 Wishart processes.

Let $\sigma_N:\Pi_N(\mathbb F)\to C_N^B$ be the mapping which relates to each matrix in $\Pi_N(\mathbb F)$ its  ordered spectrum.
Then, it is well-known that $(\sqrt{\sigma_N(Z_t^p)})_{t\ge0}$ is a Bessel process on  $C_N^B$ of type $B_N$ with multiplicity
$(k_1,k_2):=((p-N+1)\cdot d/2, d/2)$ where the symbol $\sqrt{.}$ means taking square roots in each component;
 see e.g. \cite{BF,R3} for details.

We thus conclude that the CLT  \ref{CLT-B2} for $k_2=1/2, 1,2$ corresponds to a CLT for Wishart distributions on  $\Pi_N(\mathbb F)$
with fixed time parameters where the shape parameters $p$ tend to $\infty$. Notice that the distributions
 $\mu_t^p:=P_{Z_t^p}\in M^1(\Pi_N(\mathbb F))$ of $Z_t^p$ satisfy $\mu_t^{p_1}*\mu_t^{p_2}=\mu_t^{p_1+p_2}$ for $p_1,p_2\in\mathbb N$ with
 the usual convolution of measures on the vector space of all $N\times N$ Hermitian matrices over $\mathbb F$ by the very construction of the 
random variables $Z_t^p$. Moreover, this convolution relation holds for all real parameters $p$ which are sufficiently large. 
We thus may apply the classical law of large numbers and CLT for sums of iid random variables on finitely dimensional vector spaces to obtain
 LLs and a CLT for Wishart distributions for $p\to\infty$. If one computes the mean vectors and covariance matrices for  $Z_t^p$ 
one obtains readily that this classical CLT  for $p\to\infty$ on the level of  Hermitian matrices corresponds to Theorem  \ref{CLT-B2}
on the Weyl chamber $C_N^B$.

We also remark that in this setting  there are related LLs and CLTs for radial random walks $(X_n^p)_{n\ge0}$ 
on  the vector space $M_{p,N}(\mathbb F)$ when the dimension parameter $p$ as well as the time parameter $n$ tend to $\infty$ in a 
coupled way; see \cite{G,RV3,V1}. We also mention that the CLT \ref{CLT-B2}
 has some relations with limit theorems of Bougerol \cite{B} for noncompact Grassmann manifolds over $\mathbb F$
 when the dimensions tend to infinity.
\end{remark}

The strong LLs \ref{SLLN-A}, \ref{SLLN-B1}, and \ref{SLLN-D} also admit central limit theorems similar to Theorem
 \ref{CLT-B2}. These results, whose proofs are also based on these strong LLs, 
 are more complicated and will be presented in \cite{VW}.
To get a brief impression,
 we fix a root system, a 
multiplicity $k$ (which might be 2-dimensional in the case $B_N$), and the corresponding Bessel processes $(X_{t,k})_{t\ge0}$.
For each function $F\in C^{(2)}(\mathbb R^N)$ we obtain from the It{\^o} formula and the general SDE (\ref{SDE-general})
that
\begin{align}
dF(X_{t,k})&= \nabla F(X_{t,k}) \> dX_{t,k} +\frac{1}{2}\Delta  F(X_{t,k}) \> dt\notag\\
&= \nabla F(X_{t,k}) \> dB_t +\frac{1}{2}\Bigl( ( \nabla F\cdot \nabla (\ln w_k))(X_{t,k})  +\Delta  F(X_{t,k})\Bigr) dt
\notag
\end{align}
where $\nabla (\ln w_k))$ has the form $ k\cdot H(x)$ for the root systems of type $A_{N-1}$, $D_N$, and the form
$ k_1\cdot H_1(x)+ k_2\cdot H_2(x)$ for the root system $B_N$ with suitable functions $H,H_1,H_2$. We now search for
$F\in C^{(2)}(\mathbb R^N)$, for which
\[ (\nabla F\cdot \nabla (\ln w_k))(x)\]
is independent of $x\in\mathbb R^N $. Similar to the proof of  Theorem \ref{CLT-B2} we then obtain a CLT for 
$F(X_{t,k})$ for starting points in the interior when the multiplicity or a part of it 
tends to infinity.

It was noticed by J.~Woerner that in all cases, a non-trivial example of a function $F$ with the desired properties is given by
$F(x):=\|x\|_2^2$. This can be checked easily for all root systems. This observation leads readily to the following CLT:

\begin{proposition}\label{clt-norm}
Consider  Bessel processes  $(X_{t,k})_{t\ge0}$ of types $A_{N-1}$, $B_N$, or $D_N$ as above in 
the strong LLs \ref{SLLN-A}, \ref{SLLN-B1}, and \ref{SLLN-D} with the starting points given therein. 
Let $\gamma>0$ be such that the weight function $w_k$ 
is homogeneous of degree  $2\gamma$ 
(see (\ref{def-gamma}) for the cases $A_{N-1}$, $B_N$ and the beginning of the next section for the case $D_N$ for precise formulas).

Then, for each $t>0$ and for all multiplicities $k$ with $\gamma\to\infty$,
\[\|X_{t,k}\|_2-\sqrt{\gamma+(N-1)/2}\cdot\sqrt{2t+\|x\|_2^2}\] 
converges in distribution to
\[N\Bigl(0, \frac{t^2+t\|x\|_2^2}{2t+\|x\|_2^2}\Bigr)\]
\end{proposition}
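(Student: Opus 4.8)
The plan is to imitate the proof of Theorem~\ref{CLT-B2}: apply the It\^o formula to $F(x)=\|x\|_2^2$, use Woerner's observation that its drift coefficient is constant in $x$, and then push the limit $\gamma\to\infty$ inside a stochastic integral by means of the strong limit laws. The starting point is the exact It\^o identity. Homogeneity of $w_k$ of degree $2\gamma$ means $\ln w_k(\lambda x)=2\gamma\ln\lambda+\ln w_k(x)$, so Euler's relation $x\cdot\nabla(\ln w_k)(x)=2\gamma$ holds on the interior of $C_N$; this is exactly Woerner's observation that $F=\|\cdot\|_2^2$ works. Applying It\^o to $F(X_{t,k})=\|X_{t,k}\|_2^2$ with the SDE~(\ref{SDE-general}) and $\tfrac12\Delta F\equiv N$ gives
\[
d\|X_{t,k}\|_2^2=2X_{t,k}\cdot dB_t+\bigl(X_{t,k}\cdot\nabla(\ln w_k)(X_{t,k})\bigr)\,dt+N\,dt=2X_{t,k}\cdot dB_t+(2\gamma+N)\,dt,
\]
hence $\|X_{t,k}\|_2^2=\|X_{0,k}\|_2^2+(2\gamma+N)t+2\int_0^t X_{s,k}\cdot dB_s$. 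The same relation applied to the deterministic system $\tfrac{d}{dt}x=H(x)$ of the relevant lemma (\ref{deterministic-boundary-A}, its $D_N$-analogue, or \ref{deterministic-boundary-B1}) shows that $\tfrac{d}{dt}\|\phi(t,x)\|_2^2$ is constant in $t$, so $\|\phi(t,x)\|_2^2$ is affine in $t$ with value $\|x\|_2^2$ at $t=0$; in the normalization adopted in the statement this reads $\|\phi(t,x)\|_2^2=2t+\|x\|_2^2$, and $\int_0^t\|\phi(s,x)\|_2^2\,ds=t^2+t\|x\|_2^2$.

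Next I would transfer the limit law into the martingale term, exactly as in Theorem~\ref{CLT-B2}. By Theorems~\ref{SLLN-A}, \ref{SLLN-B1}, and~\ref{SLLN-D}, the corresponding renormalized process converges to $\phi(\cdot,x)$ locally uniformly in $t$ in probability; since this renormalized process is, up to the scalar normalization, the integrand of $\int_0^t X_{s,k}\cdot dB_s$, the standard continuity of the It\^o integral with respect to its integrand (Section~II.4 of \cite{P}) yields that the suitably rescaled stochastic integral converges in probability to $\int_0^t\phi(s,x)\cdot dB_s$, which is $N\!\bigl(0,\int_0^t\|\phi(s,x)\|_2^2\,ds\bigr)=N(0,t^2+t\|x\|_2^2)$ because the $B^i$ are independent. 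Feeding this into the It\^o identity, inserting $\|X_{0,k}\|_2^2$ for the given starting point, and absorbing the remaining $O(1)$ deterministic terms — together with the shift $\tfrac{N-1}{2}$ — into the centering, I obtain that $\|X_{t,k}\|_2^2-(\gamma+\tfrac{N-1}{2})(2t+\|x\|_2^2)$, divided by $\sqrt{\gamma}$, converges in distribution to $N(0,4t^2+4t\|x\|_2^2)$.

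Finally I would pass from $\|X_{t,k}\|_2^2$ to $\|X_{t,k}\|_2$ by the device used at the end of the proof of Theorem~\ref{CLT-B2}: write
\[
\|X_{t,k}\|_2-\sqrt{(\gamma+\tfrac{N-1}{2})(2t+\|x\|_2^2)}=\frac{\|X_{t,k}\|_2^2-(\gamma+\tfrac{N-1}{2})(2t+\|x\|_2^2)}{\|X_{t,k}\|_2+\sqrt{(\gamma+\tfrac{N-1}{2})(2t+\|x\|_2^2)}},
\]
note that after dividing numerator and denominator by $\sqrt{\gamma+\tfrac{N-1}{2}}$ the denominator converges in probability to $2\sqrt{2t+\|x\|_2^2}$ (again by the strong LL, since $\|X_{t,k}\|_2^2/(\gamma+\tfrac{N-1}{2})\to 2t+\|x\|_2^2$), and apply Slutsky's lemma to get the limit $N\!\bigl(0,\tfrac{4t^2+4t\|x\|_2^2}{4(2t+\|x\|_2^2)}\bigr)=N\!\bigl(0,\tfrac{t^2+t\|x\|_2^2}{2t+\|x\|_2^2}\bigr)$, as claimed. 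The only genuinely nontrivial step is the interchange of $\gamma\to\infty$ with the stochastic integral, and this is precisely where the \emph{locally uniform} (in $t$) convergence furnished by the strong limit laws, rather than convergence at the single time $t$, is indispensable; the rest — verifying that $F=\|\cdot\|_2^2$ produces a drift coefficient independent of $x$ and keeping track of the two constants $2\gamma+N$ and $\tfrac{d}{dt}\|\phi\|_2^2$ via homogeneity — is routine bookkeeping.
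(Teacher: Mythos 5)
Your route is exactly the paper's first (sketched) proof: the paper itself only says the proposition ``can be carried out by using the function $F(x)=\|x\|_2^2$'', and your Euler identity $x\cdot\nabla(\ln w_k)(x)=2\gamma$, the It\^o identity $\|X_{t,k}\|_2^2=\|X_{0,k}\|_2^2+(2\gamma+N)t+2\int_0^t X_{s,k}\cdot dB_s$, the interchange of the limit with the stochastic integral via the locally uniform strong LLs, and the concluding Slutsky step all follow the template of Theorem~\ref{CLT-B2}. (The paper also records a shorter second proof: by \cite{RV1}, $(\|X_{t,k}\|_2)_{t\ge0}$ is itself a one-dimensional Bessel process of type $B_1$ with multiplicity $\gamma+(N-1)/2$, so Theorem~\ref{CLT-B2} with $N=1$ applies directly.)

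There is, however, one concrete gap in your constants, namely the assertion that ``in the normalization adopted in the statement'' one has $\|\phi(t,x)\|_2^2=2t+\|x\|_2^2$. The $\phi$ delivered by the strong LLs \ref{SLLN-A}, \ref{SLLN-B1}, \ref{SLLN-D} solves $\dot x=H(x)$ with $H=\frac{1}{2\kappa}\nabla(\ln w_k)$, where $\sqrt{\kappa}$ is the renormalization used there ($\kappa=k$, $\beta$, $k$ respectively); hence Euler's relation gives $\frac{d}{dt}\|\phi(t,x)\|_2^2=2\,\phi\cdot H(\phi)=2\gamma/\kappa$, which is $N(N-1)$ for $A_{N-1}$, $2N(N-1)+2N\nu$ for $B_N$, and $2N(N-1)$ for $D_N$ --- not $2$ except in coincidental cases such as $A_1$. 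Carrying the correct constant through your own argument, the martingale term has variance $4\kappa t\|x\|_2^2+4\gamma t^2$, the natural centering is $\sqrt{\kappa\|x\|_2^2+2\gamma t}$, and the limit variance is $\frac{\kappa t\|x\|_2^2+\gamma t^2}{\kappa\|x\|_2^2+2\gamma t}$; these agree with the displayed formulas of the proposition only after the starting value is re-expressed in the scale of the norm process, i.e.\ after replacing $\|x\|_2^2$ by (the limit of) $\kappa\|x\|_2^2/(\gamma+(N-1)/2)$ --- and since the discrepancy in the centering is of order $\sqrt{\gamma}$ when $x\neq0$, it is not asymptotically negligible. So the step you dismiss as routine bookkeeping is precisely where two different normalizations, $\sqrt{\kappa}$ for the vector-valued limit law and $\sqrt{\gamma+(N-1)/2}$ for $\|X_{t,k}\|_2$, get silently identified; this identification mirrors the loose phrasing of the statement itself, but a complete proof must either keep the constant $2\gamma/\kappa$ and carry out this rescaling explicitly, or argue as in the paper's second proof, where viewing $\|X_{t,k}\|_2$ as a $B_1$-Bessel process with multiplicity $\gamma+(N-1)/2$ makes the correct scaling of the starting point transparent. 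Apart from this, your steps are sound and match the paper's method.
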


\begin{proof}
The proof can be carried out by using the function $F(x):=\|x\|_2^2$ as explained above.

We give a second proof. It is well-known by \cite{RV1} that $(\|X_{t,k}\|_2)_{t\ge0}$ is a classical one-dimensional 
Bessel process of
 type $B_1$ with multiplicity $\gamma+(N-1)/2$. If we apply  Theorem \ref{CLT-B2} to this case, the statement follows.
\end{proof}

Besides of the CLT \ref{clt-norm} there exist other CLTs. For instance, for the case $A_{N-1}$, Eq.~(\ref{SDE-A})
implies that the center of gravity is
\[\frac{1}{N}\sum_{i=1}^N dX_{t,k}^i =\frac{1}{N}\sum_{i=1}^N dB_t^i,\]
i.e., it is a Brownian motion up to scaling. Also for the case $A_1$ with $2$ particles, a  CLT can be derived in a simple way.

\section{Strong limiting law for the root system $D_N$}\label{D-N}

We briefly study a limit theorem for Bessel processes of type $D_N$ next. We recapitulate that the root system
is given here by 
\[D_N=\{\pm e_1\pm e_j: \quad 1\le i<j\le N\}\]
with associated closed Weyl chamber 
\[C_N:=C_N^D=\{x\in\mathbb R^N: \quad x_1\ge \ldots\ge x_{N-1}\ge |x_N|\}.\]
$C_N^D$ may be seen as a doubling of $C_N^B$ w.r.t.~the last coordinate. We have a one-dimensional multiplicity $k\ge0$.
The weight function from (\ref{density-general}) is given by
\[w_k(x):=w_k^D(x):= \prod_{i<j}(x_i^2-x_j^2)^{2k},\]
the associated constant $\gamma$ by $\gamma_D:= kN(N-1)$, 
and the generator of the transition semigroup by
\begin{equation}\label{def-L-D} Lf:= \frac{1}{2} \Delta f +
 k \sum_{i=1}^N \sum_{j\ne i} \Bigl( \frac{1}{x_i-x_j}+\frac{1}{x_i+x_j}  \Bigr)
 \frac{\partial}{\partial x_i}f, \end{equation}
c.f. (\ref{def-L-A}) and  (\ref{def-L-B}) for the cases $A_{N-1}$ and $B_N$. For further details on the $D_N$ case we refer to
\cite{Dem1}.

Consider the Bessel process $(X_{t,k})_{t\ge0}$ of type $D_N$ which 
starts at time $0$ from the origin, $0\in C_N^D$. In this case, 
 the SDE (\ref{SDE-general})
 reads as  (\ref{SDE-B}) with $k_1:=0$, $k_2:=k$.
Moreover,
 by  (\ref{density-general}),
the random variable $X_{t,k}/\sqrt{kt}$ ($t>0$) has the  Lebesgue density
\begin{equation}\label{density-D}
\textrm{const}(k)\cdot\exp\Bigl(k\Bigl(-\|y\|_2^2 + 2\sum_{i<j}\ln(y_1^2-y_j^2)\Bigr)\Bigr)=:\textrm{const}(k)\cdot \exp(k\cdot W_D(y))
\end{equation}
on $C_N^D$. Similar to the results above in the cases $A_{N-1}$ and $B_N$, we have:

\begin{lemma}\label{char-zero-D}
 For $y\in C_N^D$, the following statements  are equivalent:
\begin{enumerate}
\item[\rm{(1)}] The function 
$W_D(x):=2\sum_{ i<j} \ln(x_i^2-x_j^2) -\|x\|^2/2$
 is maximal at $y\in C_N^B$;
\item[\rm{(2)}] $y_N=0$, and
for $i=1,\ldots,N-1$, 
\[4 \sum_{j: j\ne i} \frac{1}{y_i^2-y_j^2} =1;\] 
\item[\rm{(3)}] If $z_1^{(1)}>\ldots>z_{N-1}^{(1)}>0$ are the $N-1$ ordered zeros of 
 the classical  Laguerre polynomial $L_{N-1}^{(1)}$, then 
\begin{equation}\label{y-max-D}
2(z_1^{(1)},\ldots,z_{N-1}^{(1)},0)=(y_1^2,\ldots,y_N^2).
\end{equation}
\end{enumerate}
\end{lemma}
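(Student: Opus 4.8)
The plan is to prove the equivalence of the three statements for the $D_N$ case by closely following the pattern of Lemma \ref{char-zero-B1}, since $W_D$ is formally the $B_N$ weight $W$ with $\nu=0$. First I would establish (1) $\Leftrightarrow$ (2): the function $W_D$ is strictly concave in the variables $u_i:=x_i^2$ on the appropriate open region (the logarithmic terms $\ln(u_i-u_j)$ are concave, and $-\sum u_i/2$ is linear), so its maximum on $C_N^D$ is characterized by the vanishing of the gradient in the interior, or by boundary behavior. Computing $\partial W_D/\partial x_i = 2\sum_{j\ne i}\frac{2x_i}{x_i^2-x_j^2} - x_i = x_i\bigl(4\sum_{j\ne i}\frac{1}{x_i^2-x_j^2} - 1\bigr)$, I see that for $i=1,\ldots,N-1$ (where $x_i>0$ necessarily at a maximizer, since $W_D\to-\infty$ as any of the first $N-1$ gaps degenerates) the critical equation forces $4\sum_{j\ne i}\frac{1}{y_i^2-y_j^2}=1$, while for $i=N$ the factor $x_N$ may vanish; and indeed since $W_D$ is even in $x_N$ and $\frac{\partial}{\partial x_N}$ of the remaining terms pushes $x_N$ toward $0$, the maximizer has $y_N=0$. (One checks that $y_N=0$ is consistent: at $y_N=0$ the $N$-th equation is automatically satisfied because of the overall factor $x_N$.)

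Next I would prove (2) $\Leftrightarrow$ (3) by the standard generating-polynomial argument used for Hermite and Laguerre zeros. Set $y_N=0$ and let $p(u):=\prod_{i=1}^{N-1}(u-y_i^2)$; the conditions $4\sum_{j\ne i}\frac{1}{y_i^2-y_j^2}=1$ for $i=1,\ldots,N-1$ translate, via the identity $\sum_{j\ne i}\frac{1}{u_i-u_j}=\frac{1}{2}\frac{p''(u_i)}{p'(u_i)}$ together with the contribution of the factor $(u-0)$ in $\prod_{i=1}^N(u-y_i^2)$, into a second-order ODE for $p$ at its roots. Writing $q(u):=u\cdot p(u)=\prod_{i=1}^N(u-y_i^2)$ and tracking the extra term coming from the root at $0$, the relations become $2uq''(u) + (2-u)q'(u) = c\cdot q(u)$ (up to the rescaling $y_i^2=2z_i$) evaluated at the nonzero roots, hence, since the polynomial on each side has the right degree, identically. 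After the substitution $u=2v$ this is exactly the Laguerre differential equation for $L_{N-1}^{(1)}$ (degree $N-1$, parameter $\alpha=1$), so $p(2v)$ is proportional to $L_{N-1}^{(1)}(v)$ and its roots are the $z_i^{(1)}$; conversely the Laguerre ODE gives back the sum relations. I would cite (5.1.2) or the relevant formula in \cite{S} for the Laguerre ODE, and note the consistency with Remark \ref{remark-b-nu0}, namely $L_N^{(-1)}(v)=-\frac{v}{N}L_{N-1}^{(1)}(v)$, which is precisely the statement that the $B_N$ characterization at $\nu=0$ produces the extra zero at $0$ together with the zeros of $L_{N-1}^{(1)}$.

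I expect the main obstacle to be the careful handling of the last coordinate $x_N$, which can be negative on $C_N^D$ and is the source of the $D_N$-versus-$B_N$ distinction. One must argue cleanly that (a) any maximizer of $W_D$ on $C_N^D$ necessarily has $y_N=0$ — this uses that $W_D(x_1,\ldots,x_{N-1},x_N)$ as a function of $x_N$ alone, for fixed positive $x_1>\cdots>x_{N-1}$, is even, smooth near $0$, and has negative second derivative there (equivalently its only critical point in $(-x_{N-1},x_{N-1})$ is $x_N=0$), and (b) with $y_N=0$ the full system of $N$ stationarity conditions reduces to the $N-1$ conditions in (2). A minor additional point is verifying existence and uniqueness of the maximizer (so that "maximal at $y$" pins down a single point), which follows from the strict concavity in the $u_i=x_i^2$ coordinates on the relevant chamber plus a coercivity/compactness argument as in the Hermite case of Lemma \ref{char-zero-A}. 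Everything else is a routine transcription of the $A_{N-1}$ and $B_N$ proofs.
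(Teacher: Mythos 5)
Your treatment of (1)$\Leftrightarrow$(2) is essentially the paper's argument: coercivity of $W_D$ (it tends to $-\infty$ at infinity and at $\partial C_N^D$) yields an interior maximizer, the stationarity equations read $x_i\bigl(4\sum_{j\ne i}\frac{1}{x_i^2-x_j^2}-1\bigr)=0$, and a sign argument in the $N$-th equation forces $y_N=0$; your evenness/monotonicity argument in $x_N$ is a correct variant of this. Your uniqueness argument via concavity in $u_i=x_i^2$ also works, with one caveat: the function is not \emph{strictly} concave there, since the Hessian of $\sum_{i<j}\ln(u_i-u_j)$ annihilates constant vectors; uniqueness still follows because the linear term $-\frac12\sum_i u_i$ is strictly decreasing along that direction. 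The paper instead gets uniqueness of the candidate from the equivalence (2)$\Leftrightarrow$(3). For (2)$\Leftrightarrow$(3) you take a genuinely different route: the paper simply invokes Remark \ref{remark-b-nu0}, i.e.\ a continuity argument in $\nu\downarrow 0$ applied to Lemma \ref{char-zero-B1} together with $L_N^{(-1)}(x)=-\frac{x}{N}L_{N-1}^{(1)}(x)$, whereas you propose a direct Stieltjes/ODE computation. That is more self-contained, but your key displayed identity is wrong as stated.

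Concretely, with $q(u)=u\,p(u)=\prod_{i=1}^N(u-y_i^2)$, condition (2) at a nonzero root $u_i=y_i^2$ says $4\sum_{j\ne i}\frac1{u_i-u_j}=1$, i.e.\ $2q''(u_i)=q'(u_i)$, hence $2u_iq''(u_i)-u_iq'(u_i)=0$. The correct polynomial identity is therefore $2uq''-uq'+Nq=0$: the left-hand side vanishes automatically at the root $u=0$, vanishes at the $N-1$ nonzero roots by the relations, and has degree $N$ with leading coefficient matching $-Nq$. Under $u=2v$ this is the Laguerre equation with $\alpha=-1$, so $q(2v)\propto L_N^{(-1)}(v)$, consistent with Remark \ref{remark-b-nu0}. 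Your identity $2uq''+(2-u)q'=c\,q$ fails at $u=0$ (there the left side equals $2q'(0)=2p(0)\ne0$) and corresponds to $\alpha=0$ rather than $\alpha=1$; moreover the degree count as you state it is insufficient, since $q$ has degree $N$ but you impose vanishing only at its $N-1$ nonzero roots. Alternatively, work with $p$ directly: the relations give $2up''+(4-u)p'+(N-1)p=0$, i.e.\ $vP''+(2-v)P'+(N-1)P=0$ for $P(v)=p(2v)$, the $\alpha=1$ Laguerre equation, so $P\propto L_{N-1}^{(1)}$ as you intended. With either correction the argument is complete, and the converse direction follows by evaluating the Laguerre ODE at its zeros to recover the relations in (2).
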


\begin{proof}
Clearly $W_D(y)$ tends to $-\infty$ for $y\in C_N^D$ with $\|y\|\to\infty$ and for the case where $y$ tends to some point
in $\partial  C_N^D$. This shows that $W_D$ admits a global maximum on $C_N^D$ which is in the interior of  
$C_N^D$. Each candidate for a maximum
satisfies
\begin{equation}\label{cond-D}
-y_i+4 \sum_{j: j\ne i} \frac{y_i}{y_i^2-y_j^2} =0 \quad\quad (i=1,\ldots,N).
\end{equation}
Using $y\in C_N^D$ we see easily that $y_N=0$ (as otherwise the l.h.s. of (\ref{cond-D})
is negative for $i=N$). Moreover, as $y\not\in \partial  C_N^D$, we have $y_i>0$ for $i=1,\ldots,N-1$, that is, we 
obtain the condition in (2). If we have  the equivalence of (2) and (3), 
we see that we only have one candidate for a maximum.
This shows that (1) and (2) are equivalent. Finally, the equivalence of (2) and (3) is shown in Remark \ref{remark-b-nu0}.
\end{proof}

Lemma \ref{char-zero-D} and the explicit densities (\ref{density-D}) of $X_{t,k}/\sqrt{kt}$ immediately imply the following
weak limiting law for $X_{t,k}$ for $k\to\infty$ for start in $0$ which is analog to the LLs in 
\cite{AKM1,AKM2,AM}:

\begin{corollary}\label{lln-D-start-0}
 Consider the Bessel processes $(X_{t,k})_{t\ge0}$ of type $D_N$ which start at time $0$ in the origin $0\in C_N^D$.
Then, for each $t>0$,
$X_{t,k}/\sqrt{kt}\to y$ in probability for $k\to\infty$, where $y\in C_N^D$ is the vector in Lemma \ref{char-zero-D}.
\end{corollary}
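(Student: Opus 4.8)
The plan is to run a Laplace-type concentration argument directly on the explicit density (\ref{density-D}). Fix $t>0$ and write $\tilde X_{t,k}:=X_{t,k}/\sqrt{kt}$, so that $\tilde X_{t,k}$ has a Lebesgue density on $C_N^D$ proportional to $e^{k\cdot W_D(z)}$, and set $M:=W_D(y)$, the value of $W_D$ at the point $y$ of Lemma \ref{char-zero-D}. From the proof of Lemma \ref{char-zero-D} we already know that $W_D$ is continuous on the interior of $C_N^D$, tends to $-\infty$ both as $z$ approaches $\partial C_N^D$ and as $\|z\|\to\infty$, and attains its global maximum only at $y$. Consequently, setting $A_\delta:=\{z\in C_N^D:\ W_D(z)\ge M-\delta\}$ for $\delta>0$, the sets $A_\delta$ are, for all sufficiently small $\delta$, compact subsets of the interior of $C_N^D$, and $\bigcap_{\delta>0}A_\delta=\{y\}$; by compactness, for every open neighbourhood $U$ of $y$ there is a $\delta>0$ with $A_{2\delta}\subset U$.

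With such a $\delta$ fixed I would estimate, for $k\ge 1$,
\[
P(\tilde X_{t,k}\notin U)\ \le\ P(\tilde X_{t,k}\notin A_{2\delta})\ =\ \frac{\int_{C_N^D\setminus A_{2\delta}} e^{kW_D(z)}\,dz}{\int_{C_N^D} e^{kW_D(z)}\,dz}.
\]
For the numerator, $W_D(z)\le M-2\delta$ off $A_{2\delta}$, so writing $e^{kW_D}=e^{W_D}\cdot e^{(k-1)W_D}$ gives the bound $C_1\cdot e^{(k-1)(M-2\delta)}$ with $C_1:=\int_{C_N^D}e^{W_D(z)}\,dz<\infty$, the finiteness being exactly the normalization underlying (\ref{density-D}) for $k=1$ (indeed $e^{W_D}$ vanishes towards $\partial C_N^D$ and is killed by $e^{-\|z\|^2/2}$ at infinity). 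For the denominator, continuity of $W_D$ at the interior point $y$ with $W_D(y)=M$ yields a Euclidean ball $B\subset C_N^D$ around $y$ of positive Lebesgue measure on which $W_D\ge M-\delta$, so the denominator is at least $|B|\,e^{k(M-\delta)}$. Combining the two bounds,
\[
P(\tilde X_{t,k}\notin U)\ \le\ \frac{C_1}{|B|}\,e^{-(M-2\delta)}\,e^{-k\delta},
\]
which tends to $0$ as $k\to\infty$. Since $U$ was an arbitrary neighbourhood of $y$, this is precisely the claim that $X_{t,k}/\sqrt{kt}\to y$ in distribution, equivalently in probability once the processes are realized on a common probability space.

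I do not anticipate a genuine obstacle: this is the standard Laplace method, and the only slightly delicate inputs — that $W_D$ is proper on $C_N^D$ with a unique maximizer, so the superlevel sets $A_\delta$ are compact and collapse to $\{y\}$, and that $e^{W_D}$ is integrable over $C_N^D$ — are already furnished by the proof of Lemma \ref{char-zero-D} and by the very definition of the density (\ref{density-D}). One could alternatively invoke an off-the-shelf Varadhan/Laplace asymptotics statement, but the elementary two-sided estimate above is self-contained and is all that is needed.
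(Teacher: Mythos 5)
Your Laplace-type concentration argument is correct, and it is essentially the paper's intended proof: the paper simply asserts that Lemma \ref{char-zero-D} together with the explicit density (\ref{density-D}) "immediately" yields the corollary (in analogy with the limit laws of Andraus--Katori--Miyashita), and your two-sided estimate on the super-level sets of $W_D$ just supplies the details of that concentration-at-the-unique-maximizer argument. The inputs you rely on (properness of $W_D$, uniqueness of the maximizer $y$, integrability of $e^{W_D}$) are indeed available from the proof of Lemma \ref{char-zero-D} and the normalization of the density, so no gap remains.
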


The LLs for a starting point in the interior of the Weyl chamber as in Theorems~\ref{SLLN-A}, \ref{SLLN-B1}, and \ref{SLLN-B2} can be also derived
for the root system $D_N$. For this, we again  compare   $\tilde X_{t,k}:=  X_{t,k}/\sqrt k$ with solutions of a deterministic dynamical system.

\begin{lemma}\label{deterministic-boundary-D}
 For $\epsilon>0$ consider the open subsets
 $U_\epsilon:=\{x\in C_N^D:\> d(x,\partial C_N^D)>\epsilon\}$.
Then
 the function 
\[H:U_\epsilon\to \mathbb R^N, \quad x\mapsto 
\left(\begin{matrix}\sum_{j\ne1}\Bigl( \frac{1}{x_1-x_j}+   \frac{1}{x_1+x_j}\Bigr)\\
\vdots\\
\sum_{j\ne N} \Bigl(\frac{1}{x_N-x_j}+   \frac{1}{x_N+x_j}\Bigr)
\end{matrix}\right)\]
is Lipschitz continuous on $U_\epsilon$ with Lipschitz constant $L_\epsilon>0$. Moreover,
 for each starting point $x_0\in U_\epsilon$, the solution $\phi(t,x_0)$
of the dynamical system $\frac{dx}{dt}(t) =H(x(t))$ satisfies $\phi(t,x_0)\in U_\epsilon$ for all $t\ge0$.
\end{lemma}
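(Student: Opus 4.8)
The plan is to follow the same two–step scheme as in Lemmas~\ref{deterministic-boundary-A} and \ref{deterministic-boundary-B1}. \emph{Lipschitz continuity} is the easy step and is handled verbatim as there: on $U_\epsilon$ the condition $d(x,\partial C_N^D)>\epsilon$ forces $x_i-x_{i+1}>\epsilon\sqrt2$ for $i\le N-1$ and $x_{N-1}+x_N>\epsilon\sqrt2$, and combining these one produces a uniform $\tilde\epsilon>0$ with $|x_i\pm x_j|>\tilde\epsilon$ for all $i\ne j$ on $U_\epsilon$; hence every $\partial H/\partial x_i$ is bounded on $U_\epsilon$ and $H$ is Lipschitz. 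For the invariance of $U_\epsilon$ I would first reduce to the sub–chamber $\{x_N\ge0\}\,(=C_N^B)$: the $N$-th drift component is $\dot x_N=\sum_{j<N}2x_N/(x_N^2-x_j^2)=x_N\psi(x)$ with $\psi$ bounded and $\le 0$ on $U_\epsilon$, so by Gronwall each of $\{x_N>0\}$, $\{x_N=0\}$, $\{x_N<0\}$ is flow–invariant; since the vector field is equivariant under the $D_N$–reflection $x\mapsto(x_1,\dots,x_{N-1},-x_N)$ (which fixes $C_N^D$, $U_\epsilon$ and $\partial C_N^D$), we may assume $x_N(t)\ge0$ for all $t$. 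On $\{x_N\ge0\}$ the wall $\{x_{N-1}=-x_N\}$ is never the nearest one, so $d(x,\partial C_N^D)=\tfrac1{\sqrt2}\min_{1\le i< N}(x_i-x_{i+1})$, and exactly as in the earlier lemmas it is enough to show: whenever $y_i:=x_i-x_{i+1}$ is minimal among $y_1,\dots,y_{N-1}$ at some time $t$, then $\dot y_i\ge0$.

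The mechanism for the last step is a mirror trick. Adjoining the points $-x_1,\dots,-x_N$ and writing $z_1\ge\cdots\ge z_{2N}$ for the sorted $2N$–tuple (so $z_l=x_l$ for $l\le N$ and $z_{2N+1-l}=-z_l$), a short computation gives
\[
\dot z_l=\sum_{m\ne l}\frac1{z_l-z_m}-\frac1{2z_l}=\sum_{m\ne l,\ m\ne 2N+1-l}\frac1{z_l-z_m},
\]
i.e. the $D_N$–flow is the $A_{2N-1}$ Dyson flow on these $2N$ points with the $N$ ``antipodal'' repulsions deleted. Since $z_i-z_{i+1}=y_i$ for $i<N$, this yields
\[
\dot y_i=\Bigl(\dot z_i^{\,\mathrm{Dyson}}-\dot z_{i+1}^{\,\mathrm{Dyson}}\Bigr)+\Bigl(\frac1{2x_{i+1}}-\frac1{2x_i}\Bigr),
\]
and the correction term is $\ge0$ because $x_i\ge x_{i+1}>0$. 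If $2x_N\ge y_i$, then $y_i$ is the minimal gap among \emph{all} $2N-1$ Dyson gaps (using $2x_N=z_N-z_{N+1}$ and the reflection symmetry of the other gaps), so the estimate obtained inside the proof of Lemma~\ref{deterministic-boundary-A} (applied with $N$ replaced by $2N$) gives $\dot z_i^{\,\mathrm{Dyson}}-\dot z_{i+1}^{\,\mathrm{Dyson}}\ge \tfrac1{z_1-z_{i+1}}+\tfrac1{z_i-z_{2N}}>0$, and we are done.

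If $2x_N<y_i$, write $\dot z_i^{\,\mathrm{Dyson}}-\dot z_{i+1}^{\,\mathrm{Dyson}}$ in the gap variables as in Lemma~\ref{deterministic-boundary-A}: every difference term is $\ge0$ except the single one in which the small gap $z_N-z_{N+1}=2x_N$ is ``swapped in'', which equals $\tfrac1{x_i-x_N}-\tfrac1{x_{i+1}+x_N}<0$. One then checks the elementary inequality $2x_{i+1}(x_{i+1}+x_N)\ge(x_i-x_N)(x_{i+1}-x_N)$ (it follows from $y_i\le x_{i+1}$ and $0\le x_N<x_{i+1}$, the case $i=N-1$ being trivial), which shows $\tfrac1{x_i-x_N}-\tfrac1{x_{i+1}+x_N}+\tfrac1{2x_{i+1}}\ge0$; hence the one negative term is absorbed by the correction $\tfrac1{2x_{i+1}}-\tfrac1{2x_i}$ together with the boundary terms $\tfrac1{x_1-x_{i+1}}+\tfrac1{x_1+x_i}$ and the remaining non‑negative difference terms. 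In both cases $\dot y_i\ge0$, so $s\mapsto\min_i y_i(s)$ is non-decreasing and $U_\epsilon$ is invariant.

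The delicate point — and the reason the argument is heavier than for $A_{N-1}$ — is the behaviour near the hyperplane $\{x_N=0\}$. This hyperplane lies in the \emph{interior} of $C_N^D$, so the gap $2x_N$ is free to shrink to $0$, and the mirror particle then contributes a singular drift $\pm(2x_N)^{-1}$; the scheme works only because this singular term cancels the singular part of the Dyson drift, which is why $(2x_N)^{-1}$ must be kept inside $\dot z_i^{\,\mathrm{Dyson}}-\dot z_{i+1}^{\,\mathrm{Dyson}}$ rather than estimated on its own. Concretely the hard sub‑case is $i$ close to $N$ with $x_N$ small, where the $D_N$–flow is a perturbation of the $B_{N-1}$–Bessel ODE with $\nu=2$ (the term $\tfrac2{x_i}=\tfrac1{x_i-0}+\tfrac1{x_i+0}$ being the contribution of the coalesced pair $\pm x_N$), and the estimates of Lemmas~\ref{deterministic-boundary-A} and \ref{deterministic-boundary-B1} must be combined carefully enough that all bounds stay finite as $x_N\to0$.
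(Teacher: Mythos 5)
Most of your scheme is sound and goes well beyond the paper's two-line "analogous to the $B_N$ case" proof: the Lipschitz part, the reduction to $x_N\ge 0$ via sign-invariance of $x_N$ and reflection equivariance, the identity $d(x,\partial C_N^D)=\tfrac1{\sqrt2}\min_{i<N}(x_i-x_{i+1})$ on $\{x_N\ge0\}$, the mirror decomposition $\dot x_l=\dot z_l^{\mathrm{Dyson}}-\tfrac1{2x_l}$, case (a), and the sub-case $i=N-1$ of case (b) are all correct. The gap is the final absorption in case (b) for $i\le N-2$: after the swapped-in term is absorbed by $\tfrac1{2x_{i+1}}$, the leftover $-\tfrac1{2x_i}$ is \emph{not} controlled. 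The two boundary terms $\tfrac1{x_1-x_{i+1}}+\tfrac1{x_1+x_i}$ do not dominate it (they tend to $0$ when $x_1$ is large while $\tfrac1{2x_i}$ stays fixed), and the appeal to "the remaining non-negative difference terms" is unquantified, so this step is an assertion, not a proof.

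Moreover, this step cannot be repaired, because the monotonicity you are aiming at --- and with it the invariance claim of the lemma in the Euclidean metric --- fails for larger $N$. Take $N=10$ and $x=(L,8,7,6,5,4,3,2,1,0)$ with $L$ large; this is an interior point, the minimal wall functional is the gap $y_8=x_8-x_9=1$ (a tiny perturbation makes it the strict minimum and makes $x_{10}>0$ without changing what follows), and a direct computation gives, after cancellations, $\dot y_8=H_8(x)-H_9(x)=\tfrac1{10}+\tfrac17-\tfrac14+O(L^{-2})=-\tfrac1{140}+O(L^{-2})<0$. Hence $d(\phi(t,x),\partial C_{10}^D)$ strictly decreases for small $t>0$, and $U_\epsilon$ is not flow-invariant for $\epsilon$ slightly below $1/\sqrt2$; this simultaneously defeats your absorption step (here the boundary terms vanish as $L\to\infty$) and the statement itself. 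The same configuration, with $\nu/x_i$ added, also shows that the termwise comparison "the right-hand side is clearly greater than in the $A_{N-1}$ case" invoked for Lemma \ref{deterministic-boundary-B1} does not give $\dot y_i\ge0$, since the extra summands $\tfrac1{x_i+x_j}$ and $\tfrac{\nu}{x_i}$ \emph{lower} the gap derivatives; so the paper's claimed analogy offers no way out either. What Theorem \ref{SLLN-D} really needs is only that $\phi(\cdot,x_0)$ stays locally uniformly in $t$ away from $\partial C_N^D$, and that weaker statement can be proved differently: $H=\nabla V$ with $V(x)=\sum_{i<j}\ln(x_i^2-x_j^2)$, so $V(\phi(t,x_0))$ is non-decreasing while $\frac{d}{dt}\|\phi(t,x_0)\|^2=2N(N-1)$, which yields a positive lower bound for every $x_i^2-x_j^2$, hence for the distance to the boundary, on compact time intervals. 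I would recommend reformulating the lemma along these lines rather than trying to salvage the minimal-gap monotonicity.
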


\begin{proof} The proof is completely analog, but slightly simpler than that of Lemma \ref{deterministic-boundary-B1}.
We skip the details. 
\end{proof}

Parts (2) and (3) of Lemma \ref{char-zero-D}  lead to the following explicit solution of the differential equation of
 Lemma \ref{deterministic-boundary-D}:

\begin{corollary}\label{special-solution-D}
Let $y\in C_N^D$ be the vector in Eq.~(\ref{y-max-D}). Then for
 each $c>0$, a solution of the dynamical system in  Lemma \ref{deterministic-boundary-D}
is given by $\phi(t,c\cdot y)= \sqrt{t+c^2}\cdot y $.
\end{corollary}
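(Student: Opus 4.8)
The plan is to verify directly that the curve $t \mapsto \sqrt{t+c^2}\cdot y$ solves the initial value problem $\frac{dx}{dt}(t) = H(x(t))$, $x(0) = c\cdot y$, where $H$ is the vector field from Lemma \ref{deterministic-boundary-D}; uniqueness of solutions (which holds by the Lipschitz continuity of $H$ on each $U_\epsilon$, established in that lemma) then identifies this curve with $\phi(t,c\cdot y)$. First I would note that for $y$ the vector of Eq.~(\ref{y-max-D}), we have $y \in C_N^D$ with $y_N = 0 < y_{N-1} < \cdots < y_1$, hence $c\cdot y$ lies in the interior of $C_N^D$, so $c\cdot y \in U_\epsilon$ for some $\epsilon > 0$ and Lemma \ref{deterministic-boundary-D} applies; the solution stays in the interior for all $t \ge 0$, so the computation below is legitimate.

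Now set $\phi(t) := \sqrt{t+c^2}\cdot y$ and write $a(t) := \sqrt{t+c^2}$, so $\phi^i(t) = a(t)\, y_i$ and $a'(t) = \frac{1}{2 a(t)}$. The left-hand side is $\frac{d}{dt}\phi^i(t) = \frac{y_i}{2 a(t)}$. For the right-hand side, since $H$ is homogeneous of degree $-1$ (each summand $\frac{1}{x_i \pm x_j}$ scales like $1/a$ under $x \mapsto a x$), we get
\[
H^i(\phi(t)) = \sum_{j\ne i}\Bigl(\frac{1}{a(t) y_i - a(t) y_j} + \frac{1}{a(t) y_i + a(t) y_j}\Bigr) = \frac{1}{a(t)}\sum_{j\ne i}\Bigl(\frac{1}{y_i - y_j} + \frac{1}{y_i + y_j}\Bigr) = \frac{1}{a(t)}\cdot\frac{4 y_i}{y_i^2 - \text{(the $j$-sum)}},
\]
where I combined the two fractions as $\frac{1}{y_i - y_j} + \frac{1}{y_i + y_j} = \frac{2 y_i}{y_i^2 - y_j^2}$, giving $H^i(\phi(t)) = \frac{1}{a(t)}\sum_{j\ne i}\frac{2 y_i}{y_i^2 - y_j^2}$. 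By part (2) of Lemma \ref{char-zero-D}, $4\sum_{j\ne i}\frac{1}{y_i^2 - y_j^2} = 1$ for $i = 1,\ldots,N-1$, so $\sum_{j\ne i}\frac{2 y_i}{y_i^2 - y_j^2} = 2 y_i \cdot \frac{1}{4} = \frac{y_i}{2}$ for those $i$, yielding $H^i(\phi(t)) = \frac{y_i}{2 a(t)}$, which matches the left-hand side. For $i = N$ we have $y_N = 0$, so both sides vanish identically (the left side is $0$, and the $N$-th component of $H$ evaluated on a vector with last coordinate $0$ is $\sum_{j\ne N}(\frac{1}{-y_j} + \frac{1}{y_j}) = 0$). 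Finally $\phi(0) = \sqrt{c^2}\cdot y = c\cdot y$, so the initial condition holds and the proof is complete.

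The only mildly delicate point — and it is not really an obstacle — is the role of the last coordinate $y_N = 0$: one must check separately that the $N$-th equation is satisfied, which it is because both the velocity $\frac{d}{dt}(a(t)\cdot 0) = 0$ and the field component $H^N$ vanish there by the odd symmetry $\frac{1}{-y_j} + \frac{1}{y_j} = 0$; this is exactly why the characterization in Lemma \ref{char-zero-D}(2) only imposes the balance condition for $i = 1,\ldots,N-1$. Everything else is the same homogeneity-plus-fixed-point argument already used for Corollaries \ref{special-solution-B1} and the $A_{N-1}$ case.
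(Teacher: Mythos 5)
Your proof is correct and follows exactly the route the paper intends: plug $\sqrt{t+c^2}\cdot y$ into the ODE, use homogeneity of degree $-1$ of $H$, apply the fixed-point identity from Lemma~\ref{char-zero-D}(2) for $i=1,\ldots,N-1$, and handle $i=N$ via $y_N=0$ and the cancellation $\frac{1}{-y_j}+\frac{1}{y_j}=0$ (the garbled display $\frac{4y_i}{y_i^2-\text{(the $j$-sum)}}$ is only a typographical slip, corrected by the line that follows). No substantive difference from the paper's (implicit) argument.
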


\begin{theorem}\label{SLLN-D}
 Let $x$ be a point in the interior of $C_N^D$, and  $y\in \mathbb R^N$. 
Let $k\ge 1/2$ with $\sqrt k \cdot x+y$ in the interior of $C_N^B$ for $k\ge k_0$. For  $k\ge k_0$, consider the Bessel processes $(X_{t,k})_{t\ge0}$ of type $D_N$ started from $\sqrt k\cdot x+y$.
Then, for all $t>0$,
\[\sup_{0\le s\le t, k\ge k_0}\|X_{s,k}- \sqrt k \phi(s,x) \|<\infty\]
 almost surely.
In particular,
\[X_{t,k}/\sqrt k\to \phi(t,x) \quad\quad\text{for}\quad\quad k\to\infty\]
locally uniformly in $t$ almost surely and thus locally uniformly in $t$ in probability.
\end{theorem}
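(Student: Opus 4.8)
The plan is to transcribe the proof of Theorem \ref{SLLN-A} almost verbatim. The crucial point is that the deterministic comparison system for the $D_N$ case is supplied by Lemma \ref{deterministic-boundary-D}, whose neighbourhoods $U_\epsilon=\{x\in C_N^D:\ d(x,\partial C_N^D)>\epsilon\}$ are defined through the ordinary Euclidean distance, exactly as in the $A_{N-1}$ case and unlike the modified distances needed for Theorems \ref{SLLN-B1} and \ref{SLLN-B2}. Since $k\ge1/2$, the $D_N$ Bessel process started in the interior of $C_N^D$ stays there almost surely, so the SDE
\[dX_{t,k}^i=dB_t^i+k\sum_{j\ne i}\Bigl(\frac{1}{X_{t,k}^i-X_{t,k}^j}+\frac{1}{X_{t,k}^i+X_{t,k}^j}\Bigr)\,dt\qquad(i=1,\ldots,N)\]
is well posed along the whole path. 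Dividing by $\sqrt k$, I would write the integral equation for the renormalized process $\tilde X_{t,k}:=X_{t,k}/\sqrt k$ and compare it with the deterministic flow $Y_t=\phi(t,x)$ solving $\dot Y=H(Y)$, $Y_0=x$, from Lemma \ref{deterministic-boundary-D}.

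I would then run Picard iterations for both equations starting from the common value $x$, fix $\epsilon>0$ small enough that $x+y/\sqrt k\in U_\epsilon$ for all $k\ge k_0$, introduce the stopping time $T_{\epsilon,k}:=\inf\{t>0:\ \tilde X_{t,k}\notin U_\epsilon\}$, and bound the stopped maximal differences $D_{t,k,m,\epsilon}:=\sup_{s\in[0,t\wedge T_{\epsilon,k}]}\|\tilde X_{s,k,m}-Y_{s,m}\|$ by induction on $m$, using the Lipschitz constant $L_\epsilon$ of $H$ on $U_\epsilon$; this gives $D_{t,k,m,\epsilon}\le\frac{1}{\sqrt k}C_te^{L_\epsilon t}$ with $C_t:=\|y\|+\sup_{s\le t}\|B_s\|$. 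Letting $m\to\infty$ and invoking the classical convergence of Picard iterates to the solutions (in probability, hence a.s. along a subsequence) yields $\sup_{s\in[0,t\wedge T_{\epsilon,k}]}\|\tilde X_{s,k}-Y_s\|\le\frac{1}{\sqrt k}C_te^{L_\epsilon t}$ almost surely. The argument then closes exactly as in Theorem \ref{SLLN-A}: on $\Omega_M:=\{C_t\le M\}$ (with $P(\Omega_M)\to1$) I enlarge $k_0$ so that $\tfrac{1}{\sqrt{k_0}}Me^{L_\epsilon t}<\tfrac12(d(x,\partial C_N^D)-\epsilon)$; since $d(Y_s,\partial C_N^D)\ge d(x,\partial C_N^D)$ for all $s$ by Lemma \ref{deterministic-boundary-D}, the bound forces $\tilde X_{s,k}$ to remain in $U_\epsilon$ on $[0,t]$, so by path continuity $T_{\epsilon,k}=\infty$ on $\Omega_M$ for $k\ge k_0$, and hence $\|X_{s,k}-\sqrt k\phi(s,x)\|\le Me^{L_\epsilon t}$ there. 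Letting $M\to\infty$ gives the first assertion, and letting $k_0\to\infty$ in the corresponding bound for $\tilde X_{s,k}-\phi(s,x)$ gives the locally uniform a.s. convergence, hence the convergence in probability.

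I do not expect a genuine obstacle; the proof is a direct transcription, and the only non-routine input is already packaged in Lemma \ref{deterministic-boundary-D}: that on $U_\epsilon$ all the quantities $|x_i\pm x_j|$ are bounded away from $0$ (using that $C_N^D$ is the doubling of $C_N^B$ in the last coordinate, so the single condition $d(x,\partial C_N^D)>\epsilon$ controls $x_i-x_j$, $x_i+x_j$ and $x_{N-1}-|x_N|$ simultaneously), and consequently that the extra drift terms $1/(x_i+x_j)$ are Lipschitz on $U_\epsilon$ and leave the induction estimate intact. The only point of care is the sign bookkeeping for the last coordinate near $\partial C_N^D$, which however never enters because the whole analysis takes place inside $U_\epsilon$.
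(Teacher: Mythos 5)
Your proposal is correct and is exactly the argument the paper intends: its proof of Theorem \ref{SLLN-D} consists of the single remark that it is analogous to Theorem \ref{SLLN-A}, and your transcription (Picard iteration against the flow of Lemma \ref{deterministic-boundary-D}, stopping times $T_{\epsilon,k}$, the Gronwall-type bound $\frac{1}{\sqrt k}C_te^{L_\epsilon t}$, and the $\Omega_M$ argument to force $T_{\epsilon,k}=\infty$) fills in those details faithfully, including the observation that the Euclidean-distance sets $U_\epsilon$ suffice here since $d(x,\partial C_N^D)>\epsilon$ bounds all denominators $x_i\pm x_j$ away from zero.
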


\begin{proof} 
The proof is analog to that of  Theorem \ref{SLLN-A}. We skip the details.
\end{proof}

\begin{remark}
Let $(X_{t,k}^D)_{t\ge0}$ be a Bessel process of type D with multiplicity $k\ge0$ on the chamber $C_N^D$. Then the process
$(X_{t,k}^B)_{t\ge0}$ with 
\[X_{t,k}^{B,i}:= X_{t,k}^{D,i} \quad(i=1,\ldots,N-1), \quad X_{t,k}^{B,N}:= |X_{t,k}^{D,N} |\]
is a Bessel process of type B with the multiplicity $(k_1,k_2):=(0,k)$. This follows easily from a comparison of the
corresponding generators. 

We thus conclude from Theorem \ref{SLLN-D} that the strong LL in Theorem \ref{SLLN-B1} remains valid also for $\nu=0$. 
\end{remark}

Funding: The first author has been supported by
 the Deutsche Forschungsgemeinschaft
 (DFG) via RTG 2131 \textit{High-dimensional Phenomena in Probability - Fluctuations and Discontinuity}
 to visit Dortmund for the preparation of this paper.
 
\bibliographystyle{abbrv}
\bibliography{av-clt-bib}

\end{document}